\DeclareSymbolFont{euletters}{U}{eur}{m}{n}
\DeclareSymbolFont{eufrakletters}{U}{euf}{m}{n}
\DeclareFontFamily{U}{wncy}{}
    \DeclareFontShape{U}{wncy}{m}{n}{<->wncyr10}{}
    \DeclareSymbolFont{mcy}{U}{wncy}{m}{n}
    \DeclareMathSymbol{\Sha}{\mathord}{mcy}{"58}
\DeclareMathAlphabet{\cmcal}{OMS}{cmsy}{m}{n}
\definecolor{blue1}{rgb}{0, 0, 2}
\definecolor{sky}{rgb}{0, 0.2, 0.8}
\newtheorem{theorem}{Theorem}[section]
\newtheorem{lemma}[theorem]{Lemma}
\newtheorem{proposition}[theorem]{Proposition}
\theoremstyle{definition}
\newtheorem{definition}[theorem]{Definition}
\theoremstyle{remark}
\newtheorem{remark}[theorem]{\bf{Remark}}
\numberwithin{equation}{section} \numberwithin{table}{subsection}
\newtheorem*{theorem*}{\bf{Theorem}}
\newtheorem*{claim*}{\bf{Claim}}
\newtheorem*{convention*}{\bf{Convention}}
\newtheorem*{remark*}{\bf{Remark}}
\newtheorem*{remarks*}{\bf{Remarks}}
\newtheorem*{example*}{\bf{Example}}
\newtheorem*{examples*}{\bf{Examples}}
\newcommand{\C}{{\mathbb{C}}}
\newcommand{\F}{{\mathbb{F}}}
\newcommand{\Q}{{\mathbb{Q}}}
\newcommand{\R}{{\mathbb{R}}}
\newcommand{\Z}{{\mathbb{Z}}}
\newcommand{\cF}{{\cmcal{F}}}
\newcommand{\cO}{{\cmcal{O}}}
\newcommand{\cP}{{\mathcal{P}}}
\newcommand{\cS}{{\cmcal{S}}}
\def\a{\alpha}
\def\b{\beta}
\def\d{\delta}
\def\ve{\varepsilon}
\def\g{\gamma}
\def\D{\Delta}
\def\<{\left\langle}
\def\>{\right\rangle}
\newcommand{\zmod}[1]{{\Z/{#1}\Z}}
\newcommand{\inj}{\hookrightarrow}
\newcommand{\surj}{\twoheadrightarrow}
\newcommand{\arinj}{\ar@{^(->}}
\newcommand{\arsurj}{\ar@{->>}}
\newcommand{\arsub}{\ar@{}[r]|-*[@]{\subset}}
\newcommand{\arsup}{\ar@{}[r]|-*[@]{\supset}}
\newcommand{\arcap}{\ar@{}[d]|-*[@]{\subset}}
\newcommand{\arcup}{\ar@{}[u]|-*[@]{\subset}}
\newcommand{\arin}{\ar@{}[u]|-*[@]{\in}}
\renewcommand{\pmod}[1]{{\,(\textnormal{mod}\hspace{1mm} {#1})}}
\newcommand{\Hom}{{\textnormal{Hom}}}
\newcommand{\Gal}{{\textnormal{Gal}}}
\newcommand{\sHom}{{\mathscr{H}\kern-.5pt om}}
\newcommand{\sExt}{{\mathscr{E}\kern-.5pt xt}}
\newcommand{\sgn}{\textnormal{sgn}}
\newcommand{\ns}{{N=\square}}
\renewcommand{~}{\hspace*{0.5mm}}
\newcommand{\wt}{\widetilde}
\newcommand{\ov}{\overline}
\newcommand{\ms}{\medskip}
\newcommand{\sm}{\smallsetminus}
\renewcommand{\dim}{\textnormal{dim}~}
\newcommand{\Sel}{\textnormal{Sel}}
\newcommand{\sign}{\textnormal{sgn}}
\newcommand{\im}{\textnormal{im}}
\renewcommand{\ker}{\textnormal{ker}}
\newcommand{\sel}[1]{{#1^\times}/{({#1^\times})^2}}
\mathchardef\hyp="2D
\newcommand{\xyv}[1]{\xymatrixrowsep{#1 pc}}
\newcommand{\xyh}[1]{\xymatrixcolsep{#1 pc}}
\newcommand{\qa}{{\quad \text{and} \quad}}
\newcommand{\qqa}{{~ \text{ and } ~}}
\newcommand{\magenta}[1]{{\color{magenta} #1}}
\renewcommand{\text}{\textnormal}
\begin{document}
\def\UrlFont{\rm}
\title{Bounds for $2$-Selmer ranks in terms of seminarrow class groups}

\author{Hwajong Yoo}

\address{Hwajong Yoo, College of Liberal Studies and Research Institute of Mathematics, Seoul National University, Seoul 08826, South Korea}
\email{\textnormal{hwajong@snu.ac.kr}}                                                                  

\author{Myungjun Yu}
\address{Myungjun Yu, Center for Mathematical Challenges, Korea Institute for Advanced Study, 85 Hoegi-ro, Dongdaemun-gu, Seoul, South Korea}
\email{\textnormal{mjyu.math@gmail.com}}                                                                  
\maketitle
\begin{abstract}
Let $E$ be an elliptic curve over a number field $K$ defined by a monic irreducible cubic polynomial $F(x)$. When $E$ is \textit{nice} at all finite primes of $K$, we bound its $2$-Selmer rank in terms of the $2$-rank of a modified ideal class group of the field $L=K[x]/{(F(x))}$, which we call the \textit{semi-narrow class group} of $L$. We then provide several sufficient conditions for $E$ being nice at a finite prime. 

As an application, when $K$ is a real quadratic field, $E/K$ is semistable and the discriminant of $F$ is totally negative, then we frequently determine the $2$-Selmer rank of $E$ by computing the root number of $E$ and the $2$-rank of the narrow class group of $L$.  
\end{abstract}


\section{Introduction}\label{section: introduction}
Let $E$ be an elliptic curve over a number field $K$, given in the form $y^2=F(x)$ where $F(x)$ is a monic cubic polynomial with coefficients in $\cO_K$, the ring of integers of $K$.
The Mordell--Weil theorem tells us that the $K$-rational points $E(K)$ form a finitely generated abelian group. 
The rank of $E(K)$, called the \textit{Mordell--Weil rank}, is one of the central objects in number theory.
Unfortunately, there is no known general algorithm that is guaranteed to find the Mordell--Weil rank. One of the most common methods for computing it is studying the $2$-\textit{Selmer group} of $E$, denoted by $\Sel_2(E/K)$, which is effectively computable.

From now on, we assume that $|E(K)[2]|=1$, i.e., $F(x)$ is irreducible over $K$. Let $L:=K[x]/{(F(x))}$ be a cubic extension of $K$. It is known that there should be a connection between the $2$-Selmer group of $E$ and the $2$-class group of $L$. For a description of known results, see the introduction of \cite{BPT}. 
Our main goal of this article is to understand this connection more thoroughly. To do so, we first identify\footnote{This is well-known, for example, Case 1 of \cite[p. 717]{BK77}. For details, see \cite[p. 9]{St17} or \cite[Lem. 2.7]{Li19}.} $H^1(K, E[2])$ with
\begin{equation*}
(\sel L)_\ns := \{ [\a] \in \sel L : N(\a) \in (K^\times)^2 \},
\end{equation*}
where $N:L^\times \to K^\times$ is the norm map. Similarly, we identify $H^1(K_v, E[2])$ with $(\sel {L_v})_\ns$, where  
\begin{equation*}
L_v:=L \otimes_K K_v = K_v[x]/{(F(x))}.
\end{equation*}
Then we can regard the $2$-Selmer group as a subgroup of $(\sel L)_\ns$, i.e., we define the $2$-Selmer group of $E$ as follows:
\begin{equation*}
\Sel_2(E/K) := \{ [\a] \in (\sel L)_\ns : [\a_v] \in \im (\d_{K_v}) \text{ for all primes $v$ of $K$}\},
\end{equation*}
where $\d_{K_v} : E(K_v)/{2E(K_v)} \inj H^1(K_v, E[2])=(\sel {L_v})_\ns$ is the \textit{local Kummer map}. (For unfamiliar notation, see Section \ref{subsection: notation}.) From now on, we call $\im(\d_{K_v})$ the \textit{local condition for $\Sel_2(E/K)$}.

\ms
Now, we consider subgroups of $(\sel L)_\ns$ which are related to $C_L$, the ideal class group of $L$.
Following \cite[Lem. 2.16]{Li19} we may define
\begin{equation*}
M_1':=\{ [\a] \in (\sel L)_\ns : L(\sqrt{\a})/L \text{ is unramified everywhere}\}
\end{equation*}
and
\begin{equation*}
M_2':=\{ [\a] \in (\sel L)_\ns : (\a)=I^2 \text{ for some } I \in \cF_L \qqa  \a \gg 0 \},
\end{equation*}
where $\cF_L$ is the group of fractional ideals of $L$. When $K=\Q$, we have the following \cite[Th. 2.18]{Li19}. 
\begin{theorem}[Li]
\label{theorem: li}
Suppose that $K=\Q$ and the discriminant of $F$ is negative and squarefree. Then we have
\begin{equation*}
M_1' \subset \Sel_2(E/\Q) \subset M_2', ~~|M_1'|=|C_L[2]| \qa [M_2' : M_1']=2.
\end{equation*}
Thus, we have
\begin{equation*}
\dim_{\F_2} C_L[2] \leq \dim_{\F_2} \Sel_2(E/\Q) \leq \dim_{\F_2} C_L[2]+1.
\end{equation*}
\end{theorem}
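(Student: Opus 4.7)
The plan is to prove the sandwich $M_1' \subset \Sel_2(E/\Q) \subset M_2'$ together with the two index statements; the dimension bound then follows formally.

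As a first step, I would make the local condition $\im(\d_{\Q_v})$ explicit at every place of $\Q$. The squarefree hypothesis on $\text{disc}(F)$ forces $E$ to be semistable away from $2$, so at each odd finite place $v$ one can identify $\im(\d_{\Q_v})$, up to controlled modifications, with the subgroup of \emph{unramified} classes in $(\sel{L_v})_\ns$. Since $\text{disc}(F)<0$, the cubic field $L$ has signature $(1,1)$, and at the real place $\im(\d_\R)$ consists of the classes positive in the real component of $L \otimes_\Q \R$.

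With these descriptions in hand, the inclusion $M_1' \subset \Sel_2(E/\Q)$ is direct: for $[\a] \in M_1'$, the extension $L(\sqrt\a)/L$ is unramified everywhere, so each $[\a_v]$ is unramified and hence lies in $\im(\d_{\Q_v})$. Conversely, for $[\a] \in \Sel_2(E/\Q)$, the local Selmer condition at each finite prime $\fp$ of $L$ forces $v_\fp(\a)$ to be even, globally giving $(\a)=I^2$; at the real place it forces positivity, i.e., $\a \gg 0$. Thus $\Sel_2(E/\Q) \subset M_2'$.

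To count $|M_1'|$, I would invoke the Kummer-theoretic exact sequence
$$1 \to \mathcal{O}_L^{\times,+}/(\mathcal{O}_L^\times)^2 \to \widetilde{M}_1 \to C_L^+[2] \to 1,$$
where $\widetilde{M}_1 = \{[\a] \in \sel L : L(\sqrt\a)/L \text{ is unramified everywhere}\}$ and $C_L^+$ is the narrow class group of $L$. Because $L$ has signature $(1,1)$, Dirichlet's unit theorem pins down the left-hand term, and the signature likewise controls the comparison between $C_L^+[2]$ and $C_L[2]$; intersecting $\widetilde{M}_1$ with the $N=\square$ condition should then collapse the count to $|M_1'|=|C_L[2]|$ exactly. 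For $[M_2':M_1']=2$, I would construct a homomorphism $M_2' \to \F_2$ detecting the obstruction to $L(\sqrt\a)/L$ being unramified at a controlled set of places (primarily at $\fp\mid 2$), verify its kernel equals $M_1'$, and exhibit one explicit $[\a] \in M_2' \smallsetminus M_1'$ to establish surjectivity.

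The main obstacle, I expect, lies at the primes above $2$: one must describe $\im(\d_{\Q_2})$ carefully enough to align with the naive unramified description used to define $M_1'$ and $M_2'$, and simultaneously track how these places contribute to both the counting of $|M_1'|$ and the precise factor of $2$ in $[M_2':M_1']$. Once the $2$-adic bookkeeping is pinned down, the remaining genus-theoretic comparison between Kummer theory and class-group invariants should reduce to a routine application of Dirichlet's unit theorem and the signature of $L$.
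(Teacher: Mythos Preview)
The paper does not itself prove this theorem; it is quoted as \cite[Th.~2.18]{Li19} and attributed to Li. However, the paper's own machinery recovers it as a special case: since $\text{disc}(F)$ is squarefree, $v(D)\le 1$ at every finite prime (including $v=2$), so Proposition~\ref{proposition: valuation is at most 1} shows $E$ is nice everywhere; then Theorem~\ref{theorem: main theorem nice implies bounds} gives $M_1\subset\Sel_2(E/\Q)\subset M_2$, and Theorem~\ref{theorem: main theorem sizes of M1 and M2} computes the sizes. Because $\text{disc}(F)<0$ forces the unique real prime of $\Q$ to be ramified in $L$, one checks $M_i=M_i'$ and $C_L^\infty=C_L^+=C_L$ (the latter since $-1\in\cO_L^\times$ already realizes the sign at the single real place of $L$), and the statement follows. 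This is exactly what the remark after Proposition~\ref{proposition: valuation is at most 1} alludes to.

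Your outline is broadly correct and matches this strategy: local analysis to obtain the sandwich, then class-field/Kummer counting for the sizes. Where your plan diverges is at $p=2$. You flag it as the main obstacle and propose to ``describe $\im(\d_{\Q_2})$ carefully enough,'' but you give no mechanism for doing so, and direct computation of $\im(\d_{\Q_2})$ is genuinely hard. The paper's route (following \cite{BPT}) sidesteps this entirely: the squarefree hypothesis gives $v_2(D)\le 1$, which by Lemma~\ref{lemma: etale squarefree discriminant} forces $\cO_{L_2}=\Z_2[x]/(F(x))$, and this monogenicity of the integral closure is the structural input that makes niceness at $2$ automatic (via \cite[Th.~1.11]{BPT}). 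Without an idea of this kind, your $2$-adic step is a genuine gap rather than mere bookkeeping. Your counting argument for $|M_1'|$ and $[M_2':M_1']$ is on the right track and corresponds to the content of Section~\ref{section: modified ideal class groups}, specialized to $K=\Q$ with $a=1$, $b=c=0$.
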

This theorem says that if we know $\dim_{\F_2} C_L[2]$ then $\dim_{\F_2} \Sel_2(E/\Q)$, which is called the {\it $2$-Selmer rank of $E$}, is completely determined by its \emph{root number}. As in Theorem \ref{theorem: li}, we wish to have $M_1' \subset \Sel_2(E/K) \subset M_2'$ for other number fields $K$ or other polynomials $F$ with more relaxed hypothesis. However, it cannot be achieved in general if there is a real prime $v$ of $K$ that is unramified in $L$. So we instead allow the ramifications at some real primes above unramified real primes of $K$ and 
consider new subgroups of $(\sel {L})_\ns$, which are related to a modified ideal class group of $L$.
\begin{definition}\label{definition: semi-narrow class group}
Let $P_L^\infty$ be the group of elements in $L^\times$ satisfying some positivity conditions, which is defined in Section \ref{subsection : semi-narrow class group}. We define the \textit{semi-narrow class group of $L$} by
\begin{equation*}
C^\infty_L:=\cF_L/{\{(\a) : \a \in P_L^\infty\}}.
\end{equation*}
Also, let
\begin{equation*}
\begin{split}
M_1:=\{ [\a] \in (\sel L)_\ns : ~\text{$L(\sqrt{\a})/L$ is unramified at all finite primes and } \a \in P_L^\infty \}
\end{split}
\end{equation*}
and
\begin{equation*}
M_2:=\{ [\a] \in (\sel L)_\ns : ~(\a)=I^2 \text{ for some } I \in \cF_L \text { and } \a \in P_L^\infty \}.
\end{equation*}
\end{definition}
Then we have the following \cite[Th. 2.16]{BPT}.
\begin{theorem}[Barrera--Pacetti--Tornar\'ia]
\label{theorem: BPT}
Suppose that the narrow class number of $K$ is odd, and $E/K_v$ satisfies certain conditions for all finite primes $v$ of $K$.  
Then we have
\begin{equation*}
M_1 \subset \Sel_2(E/K) \subset M_2, ~~ |M_1| = |C^\infty_L[2]| \qa [M_2:M_1] \leq 2^{[K:\Q]}.
\end{equation*}
Thus, we have
\begin{equation*}
\dim_{\F_2} C^\infty_L[2] \leq \dim_{\F_2}\Sel_2(E/K) \leq \dim_{\F_2}  C^\infty_L[2] + [K:\Q].
\end{equation*}
\end{theorem}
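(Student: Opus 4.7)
The plan is to prove the three assertions separately: the identity $|M_1|=|C^\infty_L[2]|$, the sandwich $M_1\subset \Sel_2(E/K)\subset M_2$, and the index bound $[M_2:M_1]\le 2^{[K:\Q]}$.

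For the first assertion, I would set up the natural map $\varphi\colon M_1\to C^\infty_L[2]$ sending $[\alpha]$ to $[I]$, where $I$ is any fractional ideal with $(\alpha)=I^2$ (such an $I$ exists because $L(\sqrt{\alpha})/L$ is unramified at every finite prime forces $v_\mathfrak p(\alpha)\in 2\Z$ for every finite $\mathfrak p$). The class $[I]$ in $C^\infty_L$ is well-defined since changing $I$ alters it by a principal ideal, and $\varphi$ lands in $C^\infty_L[2]$. The kernel of $\varphi$ consists of classes $[\alpha]$ with $\alpha=u\beta^2$ for a unit $u\in \mathcal O_L^\times$ satisfying the positivity condition and $N(u)\in (K^\times)^2$; surjectivity of $\varphi$ follows because for $[J]\in C^\infty_L[2]$, writing $J^2=(\gamma)$ gives a candidate $\gamma$, then one uses that the narrow class number of $K$ is odd together with the norm $N(J)^2=(N(\gamma))$ to adjust $\gamma$ by an element of $K^\times$ so that $N(\gamma)\in (K^\times)^2$. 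A parallel count of the positive units modulo squares versus the signature obstruction on norms, together with Dirichlet's unit theorem, will yield $|\ker\varphi|$ matching the correction needed so that $|M_1|=|C^\infty_L[2]|$.

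For the sandwich $M_1\subset \Sel_2(E/K)\subset M_2$, I would compare, place by place, the global subgroups $M_1,M_2\subset (\sel L)_{\ns}$ with the local Kummer images $\im(\delta_{K_v})$. At each finite prime $v$ of $K$, the global condition defining $M_1$ forces $[\alpha_v]$ to lie in the ``strict unramified'' subgroup $H^1_{\mathrm{ur, strict}}(K_v,E[2])\subset (\sel{L_v})_{\ns}$, while the condition defining $M_2$ only forces $(\alpha_v)$ to be a square in the group of fractional ideals of $L_v$. The hypothesis that $E/K_v$ satisfies the ``certain conditions'' (to be spelled out via reduction-type analysis) will precisely assert that
\begin{equation*}
H^1_{\mathrm{ur, strict}}(K_v,E[2])\;\subset\;\im(\delta_{K_v})\;\subset\;\{[\alpha_v]:(\alpha_v)\text{ is a square}\}.
\end{equation*}
At the archimedean places, the point of defining the positivity group $P_L^\infty$ is exactly so that ``$\alpha\in P_L^\infty$'' characterizes membership in $\im(\delta_{K_v})$ at every real $v$ of $K$ (allowing certain negativities at real primes of $L$ above unramified real primes of $K$, as forced by the need to sandwich, not equal, the local condition). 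Taking intersections over all $v$ and combining with the first paragraph gives both inclusions.

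Finally, for the index bound, I would observe that $M_2/M_1$ measures the extra flexibility allowed at primes above $2$: an element of $M_2$ need only satisfy $(\alpha)=I^2$, whereas an element of $M_1$ must also give an extension unramified at primes above $2$. Hence the natural map $M_2/M_1\to \prod_{v\mid 2} L_v^\times/\bigl((L_v^\times)^2\cdot U_v\bigr)$ is injective, where $U_v$ denotes the local units whose square roots generate unramified extensions. A local computation, using that $\sum_{v\mid 2}[L_v:K_v]=[L:K]\cdot\sum_{v\mid 2}[K_v:\Q_2]/[L:K]$ and that each factor contributes a group of $\F_2$-dimension at most $[K_v:\Q_2]$, yields the bound $\le 2^{[K:\Q]}$. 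The main obstacle throughout will be the case analysis at primes above $2$: one must verify that the ``certain conditions'' imposed on $E/K_v$ are sharp enough to sandwich $\im(\delta_{K_v})$ between the two unramified-type subgroups, and to control the index, which requires a careful study of the Kummer image via N\'eron models or explicit $2$-descent at $v\mid 2$.
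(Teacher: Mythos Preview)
This theorem is not proved in the paper at all: it is quoted from \cite[Th.~2.16]{BPT} and then \emph{generalized} (to arbitrary $K$, without the odd narrow class number hypothesis) by the paper's own Theorems~\ref{theorem: main theorem sizes of M1 and M2} and~\ref{theorem: main theorem nice implies bounds}. So the relevant comparison is between your sketch and the arguments of Section~\ref{section: modified ideal class groups}.

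Your plan for the sandwich $M_1\subset \Sel_2(E/K)\subset M_2$ is essentially the paper's: one introduces local groups $M_{1,v}\subset M_{2,v}$, checks $M_{1,v}=M_{2,v}=\im(\delta_{K_v})$ at infinite and odd $v$, and the ``certain conditions'' on $E/K_v$ are precisely the requirement $M_{1,v}\subset\im(\delta_{K_v})\subset M_{2,v}$ at the remaining primes (what the paper calls \emph{niceness}). No disagreement there.

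For $|M_1|=|C^\infty_L[2]|$ your route is genuinely different. You try to build a direct map $\varphi:M_1\to C^\infty_L[2]$, $[\alpha]\mapsto[I]$ with $(\alpha)=I^2$, and then argue surjectivity/kernel by ad hoc unit and signature counts. The paper instead passes through the larger group
\[
M_0=\{[\alpha]\in L^\times/(L^\times)^2:\ L(\sqrt\alpha)/L\text{ unramified at all finite primes},\ \alpha\in P_L^0\}
\]
(dropping the norm-square condition), and identifies $M_0\simeq \Hom(\Gal(H_L^\infty/L),\mu_2)\simeq C^\infty_L/2C^\infty_L$ purely by class field theory (Lemma~\ref{lemma: M0 and class group}). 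Then Proposition~\ref{proposition: index of M0 Minfty M_1} shows $M_0/M_1\simeq C^+_K/2C^+_K$ via the norm map $[\alpha]\mapsto K(\sqrt{N(\alpha)})$, so that under the odd narrow class number hypothesis $M_1=M_0$ and the size formula drops out with no unit-counting at all. Your approach can be made to work, but the steps you flag as ``a parallel count of the positive units modulo squares'' and ``adjust $\gamma$ by an element of $K^\times$'' are exactly where the hypothesis on $|C^+_K|$ must enter, and you have not made that dependence explicit; the paper's route isolates that obstruction cleanly as $C^+_K/2C^+_K$.

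For the index $[M_2:M_1]$ your localization at $v\mid 2$ gives only an inequality, and the displayed identity $\sum_{v\mid 2}[L_v:K_v]=[L:K]\cdot\sum_{v\mid 2}[K_v:\Q_2]/[L:K]$ is a tautology that does not do the work you want. The paper instead computes $|M_2|$ globally: it introduces $\gamma:M_2\to C_L[2]$ and $\pi:C^\infty_L[2]\to C_L[2]$, compares $\im(\gamma)$ with $\im(\pi)$ (Proposition~\ref{proposition: poor inequality}), and computes both kernels via the sign map and Dirichlet's unit theorem (Propositions~\ref{proposition: the size of ker gamma} and~\ref{proposition: the size of ker pi}, Lemma~\ref{lemma: basic quantities}). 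This yields the \emph{exact} value $[M_2:M_1]=2^{[K:\Q]}$, not just an upper bound, and again makes transparent where the factor $|C^+_K[2]|$ appears in general.
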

Their result indeed covers a lot larger class of elliptic curves $E/K$ than the previous work \cite{BK77, Li19}.
In spite of that, the assumption that the narrow class number of $K$ is odd is somewhat restrictive. 
For example, it is known that at least $50\%$ of totally real cubic fields have even narrow class number \cite[Cor. 7]{BV15}. For real quadratic fields, even worse is true: $100\%$ of them have even narrow class number \cite[Th. 5]{BV15}. Therefore one may hope to remove this hypothesis. 

In the present article, we generalize Theorem \ref{theorem: BPT} to the case when $K$ is an arbitrary number field. First, we compute the sizes of $M_1$ and $M_2$ for any number field $K$ in terms of the semi-narrow class group of $L$.
\begin{theorem}\label{theorem: main theorem sizes of M1 and M2}
We have
\begin{equation*}
|M_1|=\frac{|C^\infty_L[2]|}{|C^+_K[2]|} \qa |M_2|=\frac{|C^\infty_L[2]|\times 2^{[K:\Q]}}{|C^+_K[2]|},
\end{equation*}
where $C^+_K$ is the narrow class group of $K$.
\end{theorem}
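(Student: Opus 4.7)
The strategy is to relate both $M_1$ and $M_2$ to the $2$-torsion of $C_L^\infty$ via the natural map
\[
\phi \colon [\a] \longmapsto [I], \qquad \text{where } (\a) = I^2,
\]
well-defined at the level of ideals because $\cF_L$ is torsion-free. The hypothesis $\a \in P_L^\infty$ guarantees that $[(\a)] = 0$ in $C_L^\infty$, hence $[I] \in C_L^\infty[2]$. The plan is to analyze the kernel and image of $\phi$ on each of $M_1$ and $M_2$, and then to express the sizes in terms of $|C_L^\infty[2]|$, the induced norm map $N \colon C_L^\infty[2] \to C_K^+[2]$, and the positivity-unit group $U := \cO_L^\times \cap P_L^\infty$.

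\ms

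First I would handle $|M_2|$. Surjectivity (onto a suitable subgroup of $C_L^\infty[2]$) follows by picking, for each $[J] \in C_L^\infty[2]$, a generator $\g \in P_L^\infty$ with $J^2 = (\g)$, and then adjusting by a principal element so as to satisfy $N(\g) \in (K^\times)^2$; the obstruction to this adjustment is precisely measured by the induced norm map $C_L^\infty[2] \to C_K^+[2]$, which I expect to be surjective, producing the factor $|C_K^+[2]|^{-1}$. The kernel of $\phi$ consists of those $[\a]$ for which $I$ is already principal in $C_L^\infty$: writing $\a = v\, \g^2$ with $\g \in P_L^\infty$ forces $v \in U$, and modulo $(L^\times)^2$ the kernel is the image of $U/(\cO_L^\times)^2$ inside $(\sel L)_\ns$. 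Combining kernel and image yields the formula for $|M_2|$, with $2^{[K:\Q]}$ emerging as the precise size of this unit kernel after applying the $N=\square$ restriction.

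\ms

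The argument for $|M_1|$ runs parallel, but with the stronger local condition that $L(\sqrt{\a})/L$ be unramified at every finite prime. At primes $w \nmid 2$ this coincides with the $v_w(\a)$ even condition appearing in $M_2$, but at primes $w \mid 2$ it is strictly stronger and kills precisely the $2^{[K:\Q]}$-worth of units that survived in the $M_2$ kernel, giving $|M_1| = |C_L^\infty[2]|/|C_K^+[2]|$. The main obstacle will be the careful diagram chase: one must simultaneously track the semi-narrow class group exact sequence for $L$, the narrow class group exact sequence for $K$, the positivity-unit groups for both fields, and the local exact sequences at primes $w \mid 2$, and then verify that they fit into a single commutative diagram compatible with the norm. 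In particular, because $\phi$ is only well-defined on $\sel L$ up to the image of $L^\times$ in $C_L^\infty[2]$, one must show that the extra ambiguity is absorbed by the $N=\square$ condition and by the unit-group contributions. Once this compatibility is established, the snake lemma applied to the resulting diagram should extract both size formulas simultaneously.
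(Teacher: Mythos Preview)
Your key map $\phi\colon M_2 \to C_L^\infty[2]$, $[\a]\mapsto [I]$ with $(\a)=I^2$, is not well-defined. If $[\a]=[\a\beta^2]$ in $\sel L$ with $\beta\in L^\times$, then $I$ is replaced by $I(\beta)$; but $[I(\beta)]=[I]$ in $C_L^\infty$ requires $\beta\in P_L^\infty$, which you have no control over. You flag this (``$\phi$ is only well-defined \ldots\ up to the image of $L^\times$ in $C_L^\infty[2]$'') but do not resolve it, and the hope that the ambiguity is ``absorbed by the $N=\square$ condition'' is unfounded: the norm condition constrains $\a$, not the auxiliary $\beta$. This is exactly why the paper maps instead to the \emph{ordinary} class group via $\gamma\colon M_2\to C_L[2]$ (where the analogous map \emph{is} well-defined), and then separately studies the projection $\pi\colon C_L^\infty[2]\to C_L[2]$, comparing $\im(\gamma)$ with $\im(\pi)$ and $\ker(\gamma)$ with $\ker(\pi)$.

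Even granting a fix for well-definedness, your kernel and image assertions are too optimistic. You claim the kernel on $M_2$ is the image of $U/(\cO_L^\times)^2$ with $U=\cO_L^\times\cap P_L^\infty$ and that this has size exactly $2^{[K:\Q]}$; but in the paper the actual kernel of $\gamma$ is $(\sel{\cO_L})_\ns\cap M_2$, and its size involves a nontrivial sign-map computation (Propositions~2.9 and~2.11, Lemma~2.13) balancing $\sign(\cO_L^\times)$, $\sign(\cO_K^\times)$, and the archimedean subgroup $\wt V$. The factor $2^{[K:\Q]}$ only emerges after all of these pieces are assembled together with the image comparison $\im(\pi)/\im(\gamma)\simeq \im(\pi_K)$. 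For $|M_1|$ the paper takes an entirely different route: it introduces an auxiliary group $M_0$ (with the relaxed positivity condition $P_L^0$), identifies $M_0\simeq C_L^\infty/2C_L^\infty$ via class field theory, and then shows $M_0/M_1\simeq C_K^+/2C_K^+$ by a norm-to-character map. Your parallel treatment of $M_1$ via $\phi$ plus a local argument at primes above $2$ inherits the same well-definedness problem and does not supply the class-field-theoretic input that makes the paper's argument go through.
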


Next, we wish to understand when we have
\begin{equation*}
M_1 \subset \Sel_2(E/K) \subset M_2,
\end{equation*}
which provides bounds for the $2$-Selmer rank of $E$ by Theorem \ref{theorem: main theorem sizes of M1 and M2}.
For a finite prime $v$ of $K$, we first define $M_{i, v}$ as follows: Let
\begin{equation*}
M_{1, v}:=\{[\a] \in \left(\sel {L_v}\right)_\ns : L_v(\sqrt{\alpha})/{L_v} \text{ is unramified}\}
\end{equation*}
and
\begin{equation*}
M_{2, v}:=\{[\a] \in \left(\sel {L_v}\right)_\ns : \forall w \mid v, w(\alpha) \in 2\Z\},
\end{equation*}
where $w$ is a prime of $L$. (For the definition of $M_{i, v}$ for an infinite prime $v$ of $K$, see Section \ref{section: 3.1}.) 
Then we define
\begin{equation*}
M_i :=\{ [\a] \in (\sel L)_\ns : [\a_v] \in M_{i, v} \text{ for all primes $v$ of $K$}\}.
\end{equation*}
Note that if $v$ is an odd prime then $M_{1, v}=M_{2, v} = (\sel {\cO_{L_v}})_\ns$, where $\cO_{L_v}^\times$ denotes the unit group of the ring of integers $\cO_{L_v}$ of $L_v$.
Note also that if $v$ is an infinite prime then $M_{i, v}$ is defined so that $M_{1, v}=M_{2, v}=\im (\d_{K_v})$. 
\begin{definition}
\label{definition : loweruppernicenice}
For a finite prime $v$ of $K$, we say that an elliptic curve $E/K_v$ is \textit{lower nice} (resp. \textit{upper nice}) if $M_{1, v} \subset \im (\d_{K_v})$ (resp. $\im(\d_{K_v}) \subset M_{2, v}$).
If $E/K_v$ is both lower nice and upper nice, then we say that $E/K_v$ is \textit{nice}. 
Also, we say that an elliptic curve $E$ over a number field $K$ is \textit{lower nice at $v$} (resp. \textit{upper nice at $v$} and \textit{nice at $v$}) if $E/K_v$ is so. 
\end{definition}
Since the Selmer group is defined by the local conditions, we obtain the following.
\begin{theorem}\label{theorem: main theorem nice implies bounds}
If $E$ is lower nice at all finite primes of $K$, then we have $\dim_{\F_2} \Sel_2(E/K) \geq n$, where 
\begin{equation*}
n=\dim_{\F_2} C^\infty_L[2]-\dim_{\F_2} C^+_K[2].
\end{equation*}
Also, if $E$ is upper nice at all finite primes of $K$, then we have $\dim_{\F_2} \Sel_2 (E/K) \leq  n+[K:\Q]$.
Thus, if $E$ is nice at all finite primes of $K$, then we have
\begin{equation*}
n \leq \dim_{\F_2} \Sel_2(E/K) \leq n +[K:\Q].
\end{equation*}
\end{theorem}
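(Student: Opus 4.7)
The plan is to exploit the fact that $\Sel_2(E/K)$, $M_1$, and $M_2$ are all defined as subgroups of the same ambient $\F_2$-vector space $(\sel L)_\ns$, cut out by a local condition at each prime $v$ of $K$. The statement then follows almost formally from the niceness hypotheses together with Theorem~\ref{theorem: main theorem sizes of M1 and M2}, once we compare the local conditions prime-by-prime.

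First I would unwind the definitions and check that, at every infinite prime $v$ of $K$, the local conditions defining $M_1$ and $M_2$ coincide with $\im(\delta_{K_v})$: this is built into the definition of $M_{i,v}$ for infinite $v$ referenced in Section~\ref{section: 3.1}, so both inclusions $M_{1,v} \subset \im(\delta_{K_v}) \subset M_{2,v}$ hold trivially there. Thus the only primes where the conditions can genuinely differ are the finite ones, and this is exactly where the notions of lower and upper nice are designed to intervene.

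Next, assume $E$ is lower nice at every finite prime $v$, so $M_{1,v} \subset \im(\delta_{K_v})$ for all such $v$. Any $[\alpha] \in M_1$ satisfies $[\alpha_v] \in M_{1,v}$ for every $v$, hence $[\alpha_v] \in \im(\delta_{K_v})$ for every prime $v$ (finite by lower niceness, infinite by the previous paragraph). This gives $M_1 \subset \Sel_2(E/K)$. Dually, assuming $E$ is upper nice at every finite prime $v$ gives $\im(\delta_{K_v}) \subset M_{2,v}$ at every finite $v$, and then for any $[\alpha] \in \Sel_2(E/K)$ the local images lie in $M_{2,v}$ at every prime, so $\Sel_2(E/K) \subset M_2$. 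Combining both containments when $E$ is nice everywhere yields the sandwich $M_1 \subset \Sel_2(E/K) \subset M_2$.

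Finally I would translate cardinalities into $\F_2$-dimensions using Theorem~\ref{theorem: main theorem sizes of M1 and M2}: since $M_1, M_2 \subset (\sel L)_\ns$ are $\F_2$-subspaces, we read off
\begin{equation*}
\dim_{\F_2} M_1 = \dim_{\F_2} C^\infty_L[2] - \dim_{\F_2} C^+_K[2] = n, \qquad \dim_{\F_2} M_2 \leq n + [K:\Q],
\end{equation*}
and the inclusions above give the three claimed inequalities. The whole argument is essentially a bookkeeping exercise once the previous results are in place; the real content of the paper lies in Theorem~\ref{theorem: main theorem sizes of M1 and M2} and in producing verifiable criteria for lower/upper niceness at finite primes, so I do not expect any genuine obstacle in this particular proof.
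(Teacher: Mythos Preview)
Your argument is correct and matches the paper's approach exactly: the paper does not even write out a proof, merely remarking that ``since the Selmer group is defined by the local conditions, we obtain the following,'' and your proposal is precisely the unpacking of that remark via Theorem~\ref{theorem: main theorem sizes of M1 and M2}. One cosmetic point: Theorem~\ref{theorem: main theorem sizes of M1 and M2} gives $\dim_{\F_2} M_2 = n + [K:\Q]$ as an equality rather than the inequality you wrote, but this does not affect the conclusion.
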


\begin{remark}
As in \cite[Def. 3.1]{St17}, we may define 
\begin{equation*}
L(S, 2):=\{ [\a] \in \sel {L} : \forall v \not\in S,  \forall w \mid v : w(\a) \in 2\Z \},
\end{equation*}
where $S$ is the set of ``bad'' primes of $K$. Here, by ``bad'' primes we mean either the real infinite primes, even primes, or the primes of bad reduction for $E$. Then we have 
\begin{equation*}
\Sel_2(E/K) \simeq \{ [\a] \in L(S, 2) : N(\a) \in (K^\times)^2, \forall v \in S : [\a_v] \in \im (\d_{K_v}) \}.
\end{equation*}
It is easy to see that $M_2 \subset L(S, 2)$ and $L(S, 2)$ is much larger than $M_2$ in general.

In some sense, the groups $M_1$ and $M_2$ give the ``best possible bounds'' for the $2$-Selmer ranks of nice elliptic curves. 
As mentioned right before Definition \ref{definition : loweruppernicenice}, if $v$ is not even (including all the other ``bad'' primes) then the local conditions $M_{1,v}$ and $M_{2,v}$ coincide. 
Therefore the even primes are exactly the places where $M_1$ and $M_2$ differ.  
In general, however, it is extremely difficult to exactly compute $\im (\d_{K_v})$, the local condition of $\Sel_2(E/K)$ at $v$, for an even prime $v$. 
For such $v$, what one can do in some fortunate situations (which justifies the word ``nice'') is proving $\im(\d_{K_v})$ is a subset (resp. superset) of $M_{2,v}$ (resp. $M_{1,v}$). 
\end{remark}

Next, we discuss sufficient conditions for $E$ being nice. There are some cases dependent only on the field extension $L/K$.
\begin{proposition}[Barrera--Pacetti--Tornar\'ia]\label{proposition: BPT}
Let $v$ be a finite prime of $K$. Suppose that either $L_v$ is a cubic extension of $K_v$ or $\cO_{L_v} = \cO_{K_v}[x]/{(F(x))}$. Then $E$ is nice at $v$.
\end{proposition}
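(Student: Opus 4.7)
The plan is to reduce the double inclusion $M_{1,v}\subset\im(\delta_{K_v})\subset M_{2,v}$ to a single inclusion via local Tate duality, and then verify that inclusion by a direct valuation-theoretic computation in each of the two cases. Under the local Tate pairing on $H^1(K_v,E[2])=(\sel{L_v})_\ns$, the image of the Kummer map is known to be its own annihilator (it is a Lagrangian subspace). I would establish as a preliminary lemma that $M_{1,v}$ and $M_{2,v}$ are also exact annihilators of each other under the same pairing, via Hilbert-symbol calculations (at odd $v$ this is immediate since $M_{1,v}=M_{2,v}$ consists of unit classes). Granting this, the upper inclusion $\im(\delta_{K_v})\subset M_{2,v}$ immediately yields $M_{1,v}=M_{2,v}^{\perp}\subset\im(\delta_{K_v})^{\perp}=\im(\delta_{K_v})$, so it suffices to prove upper niceness.

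For any $P=(x_0,y_0)\in E(K_v)\setminus\{O\}$ we have $\delta_{K_v}(P)=[x_0-\theta]$, and the task is to show $w(x_0-\theta)\in 2\Z$ for every prime $w\mid v$ of $L$. In Case 1, $L_v/K_v$ is a cubic field with a unique $w$ and $(e,f)\in\{(1,3),(3,1)\}$, and the norm identity $f\cdot w(x_0-\theta)=v(N_{L_v/K_v}(x_0-\theta))=v(F(x_0))=2v(y_0)$ forces $w(x_0-\theta)\in 2\Z$: directly when $f=1$, and by coprimality of $2$ and $3$ when $f=3$ (which simultaneously forces $v(y_0)\in 3\Z$).

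In Case 2, write $F=\prod_{i=1}^rF_i$ as the monic irreducible factorization in $\cO_{K_v}[x]$, so that $L_v=\prod L_i$ and $\cO_{L_v}=\prod\cO_{L_i}$. The hypothesis $\cO_{L_v}=\cO_{K_v}[\theta]$, combined with the Chinese remainder theorem and the structure of the local Artin ring $\cO_{L_i}/\pi_v\cO_{L_i}$, implies that each reduction $\bar F_i\in k_v[x]$ has the form $\bar g_i^{e_i}$ for an irreducible $\bar g_i$ of degree $f_i$, with the $\bar g_i$ pairwise coprime. When $v(x_0)<0$, monicity of $F$ gives $3v(x_0)=v(F(x_0))=2v(y_0)$, which forces $v(x_0)\in 2\Z$ and hence $w_i(x_0-\theta_i)=e_iv(x_0)\in 2\Z$ for every $i$. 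When $v(x_0)\geq 0$, the reduction $\bar x_0\in k_v$ is a root of at most one $\bar F_{i_0}$; components $j\neq i_0$ give $w_j(x_0-\theta_j)=0$, and for the exceptional index one must have $f_{i_0}=1$ (an irreducible in $k_v[x]$ with a $k_v$-rational root is linear), so $w_{i_0}(x_0-\theta_{i_0})=v(F_{i_0}(x_0))=v(F(x_0))=2v(y_0)\in 2\Z$.

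The main obstacle I anticipate is the preliminary orthogonality $M_{1,v}^{\perp}=M_{2,v}$ in residue characteristic $2$. For odd $v$, Hilbert symbols of units vanish and a dimension count closes the argument, but at $v\mid 2$ one must carefully analyze the pairing on unit classes modulo squares and check that unramified square roots pair trivially with even-valuation classes. If this duality proves too delicate to state cleanly, a fallback is a direct cardinality comparison using $|\im(\delta_{K_v})|=|E(K_v)[2]|\cdot|2|_v^{-1}$ at even primes together with explicit computations of $|M_{1,v}|$ and $|M_{2,v}|$ from the structure of $L_v^{\times}/(L_v^{\times})^2$.
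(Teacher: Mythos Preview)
Your argument is essentially correct, and for the monogenic hypothesis it is actually more than the paper supplies: the paper attributes this proposition to \cite{BPT} and does not reprove the case $\cO_{L_v}=\cO_{K_v}[x]/(F(x))$, only invoking \cite[Th.~1.11]{BPT} later in the proof of Proposition~\ref{proposition: valuation is at most 1}. For the cubic case the paper does give a direct argument (Proposition~\ref{proposition: cubic extension}), and it is the same norm--valuation computation you wrote. The one difference is that the paper avoids your duality reduction entirely there by observing that $L_v/K_v$ cubic forces $|E(K_v)[2]|=1$, hence $M_{1,v}=\{[1]\}$ and lower niceness is automatic; so only upper niceness needs to be checked. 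Your Tate-duality step is thus unnecessary in Case~1, but it is the natural tool in Case~2 at even primes with $|E(K_v)[2]|\geq 2$, and your Kummer--Dedekind description $\overline{F_i}=\overline{g_i}^{\,e_i}$ with the $\overline{g_i}$ pairwise coprime is exactly what makes the valuation bookkeeping go through.

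Two points to tighten. First, the formula $\delta_{K_v}(P)=[x_0-\theta]$ fails when $P=(\theta_i,0)$ is a nontrivial $2$-torsion point, since one component of $x_0-\theta$ vanishes; you should treat this separately via the usual convention $\delta_{K_v}(P)_i=[F'(\theta_i)]$, which your coprimality of the $\overline{g_j}$ dispatches just as easily. Second, your proposed fallback (``direct cardinality comparison'') cannot by itself replace the orthogonality $M_{1,v}\perp M_{2,v}$: knowing $\im(\delta_{K_v})\subset M_{2,v}$ together with the sizes in Lemma~\ref{lemma: selmer size at 2} does not force $M_{1,v}\subset\im(\delta_{K_v})$. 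What does work is to first show $M_{1,v}\perp M_{2,v}$---which is just the fact that for $L_w(\sqrt{\alpha})/L_w$ unramified the norm group is exactly the subgroup of even-valuation elements, so $(\alpha,\beta)_{L_w}=1$ whenever $w(\beta)\in 2\Z$---and \emph{then} use the cardinality identity $|M_{1,v}|\cdot|M_{2,v}|=|H^1(K_v,E[2])|$ to upgrade orthogonality to exact annihilation. With that reading, your plan is complete.
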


One case satisfying the latter condition is the following. (In general, it is not easy to check when the conditions in Proposition \ref{proposition: BPT} are satisfied.)
\begin{proposition}[Proposition \ref{proposition: valuation is at most 1}]
Let $D$ be the discriminant of $F$. If $v(D) \leq 1$, then $E$ is nice at $v$.
\end{proposition}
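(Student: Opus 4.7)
The plan is to reduce the statement to the second clause of Proposition \ref{proposition: BPT}: it suffices to show that the hypothesis $v(D) \le 1$ forces $\cO_{L_v} = \cO_{K_v}[x]/(F(x))$. Write $R := \cO_{K_v}$ and $S := R[x]/(F(x))$, an $R$-subalgebra of $\cO_{L_v}$ that is free of rank $3$ over $R$. Since $R$ is a complete DVR and $\cO_{L_v} = \prod_{w \mid v} \cO_{L_w}$, the integral closure $\cO_{L_v}$ is itself a free $R$-module of rank $3$, so the inclusion $S \hookrightarrow \cO_{L_v}$ is described by a square change-of-basis matrix.

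The key input is the standard discriminant--index identity
\begin{equation*}
\text{disc}(S/R) \;=\; [\cO_{L_v} : S]^2 \cdot \text{disc}(\cO_{L_v}/R),
\end{equation*}
valid for any pair of free $R$-lattices of the same rank, where $[\cO_{L_v} : S]$ is the $R$-module index (the ideal generated by the determinant of such a transition matrix). Because $S$ is monogenic, $\text{disc}(S/R) = (D)$. Taking $v$-valuations yields
\begin{equation*}
v(D) \;=\; 2 \cdot v\bigl([\cO_{L_v} : S]\bigr) + v\bigl(\text{disc}(\cO_{L_v}/R)\bigr),
\end{equation*}
with both summands on the right non-negative integers.

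The assumption $v(D) \le 1$ then forces the (even) first summand to vanish, so $[\cO_{L_v} : S]$ is a unit ideal and equivalently $\cO_{L_v} = \cO_{K_v}[x]/(F(x))$. Invoking Proposition \ref{proposition: BPT} then finishes the argument. There is no serious obstacle here; the only mild care needed is verifying that the discriminant--index formula and the identity $\text{disc}(S/R) = (D)$ remain valid when $L_v$ is merely an \'etale $K_v$-algebra rather than a field, but both facts go through verbatim for any finite free $R$-algebra.
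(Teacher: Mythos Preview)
Your proposal is correct and makes the same reduction as the paper: both deduce niceness from Proposition~\ref{proposition: BPT} by showing that $v(D)\le 1$ forces $\cO_{L_v}=\cO_{K_v}[x]/(F(x))$, and both use the discriminant--index relation as the engine. The paper's Lemma~\ref{lemma: etale squarefree discriminant} takes a slightly longer route: it factors $F=\prod_i F_i$ into monic irreducibles over $\cO_{K_v}$, applies the discriminant--index formula to each field $K_v[x]/(F_i(x))$ separately to identify $\cO_{K_v}[x]/(F_i(x))$ with $\cO_{L_{w_i}}$, and then checks that the $F_i$ are pairwise coprime in $\cO_{K_v}[x]$ (via the observation that $G(\alpha)^2\mid D$ forces $G(\alpha)$ to be a unit) so that the Chinese remainder theorem reassembles $\cO_{K_v}[x]/(F(x))$ as $\prod_i \cO_{L_{w_i}}$. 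Your single application of the discriminant--index identity directly to the \'etale algebra bypasses the factor-and-CRT step; the only point requiring care is the one you already flag, that the identity and the equality $\mathrm{disc}(S/R)=(D)$ hold for finite free $R$-algebras and not just for integral extensions, and this is indeed routine linear algebra with the trace form.
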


If we require additional hypothesis on $E/{K_v}$ we have the following \cite{BK77}.
\begin{theorem}[Brumer--Kramer]
For an odd prime $v$, $E$ is nice at $v$ if $[E(K_v) : E_0(K_v)]$ is odd. For an even prime $v$, $E$ is nice at $v$ if $K_v/{\Q_2}$ is unramified and $E$ has good reduction at $v$.
\end{theorem}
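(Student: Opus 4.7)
The plan is to treat the two cases---$v \nmid 2$ with $c_v := [E(K_v) : E_0(K_v)]$ odd, and $v \mid 2$ with $K_v/\Q_2$ unramified and good reduction---separately, establishing in each the double inclusion $M_{1,v} \subseteq \im(\d_{K_v}) \subseteq M_{2,v}$ by a cardinality comparison with the aid of the N\'eron component and formal group exact sequences. For $v \nmid 2$ with $c_v$ odd, since $M_{1,v} = M_{2,v} = (\sel{\cO_{L_v}})_\ns$, it suffices to show $\im(\d_{K_v}) = M_{2,v}$. From the two exact sequences
\[
0 \to E_0(K_v) \to E(K_v) \to \Phi_v \to 0, \qquad 0 \to \hat{E}(\fm_v) \to E_0(K_v) \to \tilde{E}_{\text{ns}}(k_v) \to 0,
\]
combined with (i) $c_v$ odd giving $\Phi_v[2] = \Phi_v/2\Phi_v = 0$, and (ii) $\hat{E}(\fm_v)$ uniquely $2$-divisible for $v \nmid 2$, I would obtain $E(K_v)/2E(K_v) \cong \tilde{E}_{\text{ns}}(k_v)/2\tilde{E}_{\text{ns}}(k_v)$ and $E(K_v)[2] \cong \tilde{E}_{\text{ns}}(k_v)[2]$. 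These have the same size since $\tilde{E}_{\text{ns}}(k_v)$ is a finite abelian group, so $|\im(\d_{K_v})| = |E(K_v)/2E(K_v)| = |E(K_v)[2]|$.

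Next, I would compute $|M_{2,v}|$ via the decomposition $L_v = \prod_{w \mid v} L_w$: for $v \nmid 2$, each $\cO_{L_w}^\times/(\cO_{L_w}^\times)^2 \cong \Z/2\Z$, and the componentwise norm surjects onto $\cO_{K_v}^\times/(\cO_{K_v}^\times)^2 \cong \Z/2\Z$, giving $|M_{2,v}| = 2^{g_v - 1}$ where $g_v$ denotes the number of primes of $L$ above $v$. A short case analysis on the factorization type of $F$ in $K_v[x]$ (irreducible with $g_v = 1$; linear times irreducible quadratic with $g_v = 2$; totally split with $g_v = 3$) verifies $|E(K_v)[2]| = 2^{g_v - 1}$ in each case. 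For the inclusion $\im(\d_{K_v}) \subseteq M_{2,v}$: since $E(K_v)/2E(K_v) = E_0(K_v)/2E_0(K_v)$, any class is represented by a point $P \in E_0(K_v)$, which reduces to a smooth point of $\tilde{E}_{\text{ns}}(k_v)$; the standard representative $\alpha = x(P) - \theta$ of $\d(P)$ (with the usual modification when $P$ is $2$-torsion) then has $w(\alpha) \in 2\Z$ for every $w \mid v$. The matching cardinalities force $\im(\d_{K_v}) = M_{1,v} = M_{2,v}$.

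For $v \mid 2$ with $K_v/\Q_2$ unramified and $E$ of good reduction, good reduction gives $E(K_v) = E_0(K_v)$, and Tate's local Euler characteristic formula gives $|E(K_v)/2E(K_v)| = |E(K_v)[2]| \cdot 2^{[K_v : \Q_2]}$. The unramified hypothesis makes the formal logarithm converge on $\fm_v$, so $\hat{E}(\fm_v)/2\hat{E}(\fm_v)$ has $\F_2$-dimension $[K_v:\Q_2]$ with explicit representatives, while good reduction also forces the factorization of $F$ modulo $\pi_v$ to be multiplicity-free, hence $L_v = \prod L_w$ with each $L_w/K_v$ unramified. Using the Teichm\"uller decomposition $\cO_{L_w}^\times = \mu_{L_w} \times U^{(1)}_{L_w}$ and the $2$-divisibility of $1 + 4\cO_{L_w}$ when $L_w/\Q_2$ is unramified, I would compute $|M_{1,v}|$ and $|M_{2,v}|$ explicitly in terms of $[L_w:\Q_2]$ and $g_v$, and verify they match $|\im(\d_{K_v})|$. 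The lower inclusion $M_{1,v} \subseteq \im(\d_{K_v})$ comes from lifting unramified square-class representatives to points in $\hat{E}(\fm_v)$ via the formal exponential, while $\im(\d_{K_v}) \subseteq M_{2,v}$ is again the even-valuation check on the $x(P) - \theta$ representatives. The main obstacle is this even case: unlike the odd case, where the formal group contributes nothing mod $2$ and the count is essentially formal, the prime $2$ requires a careful $2$-adic analysis of the formal logarithm, which is precisely why both hypotheses---$K_v/\Q_2$ unramified and $E$ of good reduction---are indispensable.
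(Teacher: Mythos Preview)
The paper does not supply its own proof of this statement; it is attributed to Brumer--Kramer \cite{BK77} (and the odd case is restated as part of Theorem~\ref{theorem: odd niceness}, again with a bare citation to \cite[Cor.~3.3]{BK77}). Your odd-prime argument is essentially the Brumer--Kramer one and is sound: the cardinality count $|M_{2,v}|=|E(K_v)[2]|=2^{g_v-1}$ is correct (the norm $\sel{\cO_{L_v}}\to\sel{\cO_{K_v}}$ is surjective because $[L:K]=3$ is odd), and the inclusion $\im(\d_{K_v})\subset M_{2,v}$ via ``$P\in E_0(K_v)$ forces even valuation of $x(P)-\theta$'' is right, though you should say why: if two of the $x(P)-\theta_i$ had positive valuation at the same place, then $\theta_i\equiv\theta_j$ there and $\bar P$ would be the singular point.

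The even-prime argument, however, has a genuine gap. You assert that good reduction forces $F$ to have multiplicity-free reduction modulo $\pi_v$, hence each $L_w/K_v$ unramified. This is false: the model $y^2=F(x)$ is \emph{never} minimal at an even prime, so good reduction says nothing about $\bar F$. Concretely, $E:y^2+y=x^3$ over $\Q_2$ has good (supersingular) reduction, yet $F(x)=x^3+16$ reduces to $x^3$ and $L_v=\Q_2(\sqrt[3]{2})$ is totally ramified. Your subsequent steps (Teichm\"uller decomposition, the structure of $1+4\cO_{L_w}$ modulo squares, formal exponential) all lean on $L_w/\Q_2$ being unramified, so they do not go through in the supersingular case. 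Separately, ``$2$-divisibility of $1+4\cO_{L_w}$'' is a misstatement; what is true (and what you need) is that $(1+4\cO_{L_w})/(\cO_{L_w}^\times)^2$ has order~$2$ when $L_w/\Q_2$ is unramified.

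The route actually taken by Brumer--Kramer (and by the paper in its generalization, Theorem~\ref{theorem: good reduction}) does not attempt to control the ramification of $L_w/K_v$. In the ordinary case one changes coordinates so that $a_1=1$, locates one root $\alpha\in K_v$ with $v(\alpha)=v(1/4)$ by Hensel, and shows directly that for $P\in E_1(K_v)$ the factors $x(P)-\beta$, $x(P)-\gamma$ have even valuation via the formal-group expansion $x(P(z))-\beta=z^{-2}(1-z+O(z^2))$; the lower inclusion is then obtained by exhibiting explicit points whose Kummer images hit the unramified classes. In the supersingular case one instead shows (using $v(a_1)>0$, $v(a_3)=0$, and a Newton-polygon analysis of $F$) that $L_v/K_v$ is a cubic extension, whence $E$ is nice by Proposition~\ref{proposition: cubic extension}. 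Your cardinality-matching strategy would work once these structural facts are in hand, but the unramifiedness of $L_w/K_v$ is not the right lever.
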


One of our main theorems is the following, which removes the condition on $K_v$.
\begin{theorem}[Theorems \ref{theorem: good reduction} and \ref{theorem : multiplicative reduction}]\label{thm}
For an even prime $v$, $E$ is nice at $v$ if one of the following holds.
\begin{enumerate}
\item 
$E$ has good ordinary reduction at $v$.
\item
$E$ has good supersingular reduction at $v$, $v(2)$ is not divisible by $3$, and 
either $v(a_1)$ is odd or $3v(a_1) \geq 2v(2)$, where $a_1$ is the coefficient of $xy$ in a Weierstrass minimal model of $E/{K_v}$.
\item
$E$ has multiplicative reduction at $v$ and $v(D)$ is odd.
\end{enumerate}
\end{theorem}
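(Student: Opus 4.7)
The plan is to split into the three reduction cases and, within each, verify lower niceness ($M_{1,v} \subset \im(\delta_{K_v})$) and upper niceness ($\im(\delta_{K_v}) \subset M_{2,v}$) via the explicit Kummer description $\delta_{K_v}: P \mapsto x(P) - \theta$ (for $P$ not the identity, with $\theta$ the image of $x$ in $L_v$), combined with a cardinality comparison. The quantitative backbone is the identity
\begin{equation*}
|\im(\delta_{K_v})| = |E(K_v)/2E(K_v)| = |E(K_v)[2]| \cdot 2^{[K_v : \Q_2]},
\end{equation*}
valid at even primes $v$, together with parallel computations of $|M_{1,v}|$ and $|M_{2,v}|$ obtained by decomposing $L_v = \prod_{w \mid v} L_w$ and invoking local class field theory to describe $L_w^\times/(L_w^\times)^2$, with the $\ns$ condition cutting out a subspace of codimension at most one.

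Next I would establish upper niceness uniformly across the three cases. For $P \in E(K_v)$ not the identity, one must show $w(x(P) - \theta_w) \in 2\Z$ for every $w \mid v$, where $\theta_w$ denotes the image of $\theta$ in $L_w$. When $P$ reduces to a nonsingular point, $x(P) - \theta_w$ is a unit. When $P$ lies in the kernel of reduction, parametrizing by the formal variable $t = -x/y$ expresses $x(P) - \theta_w$ as $t^{-2}$ times a power-series unit, so its valuation is manifestly even. For multiplicative reduction the analogue follows from Tate's uniformization, with the hypothesis $v(D)$ odd used to control the component-group contribution. In each case the resulting description is explicit enough to close this direction.

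The main work is lower niceness, where the three cases genuinely diverge. In good ordinary reduction, the formal group $\hat{E}$ has height one and becomes isomorphic to $\hat{\mathbb{G}}_m$ over the maximal unramified extension; consequently $\hat{E}(\fm_v)/2\hat{E}(\fm_v)$ has the expected $\F_2$-dimension, every class in $M_{1,v}$ admits a lift, and matching cardinalities closes the argument. In multiplicative reduction with $v(D)$ odd, Tate's uniformization $E(K_v) \cong K_v^\times/q_E^\Z$ pins down $\im(\delta_{K_v})$ inside $K_v^\times/(K_v^\times)^2$; since $v(q_E) = v(D)$ is odd, the class of $q_E$ is represented by a uniformizer, and a direct check against the explicit presentation of $M_{1,v}$ concludes.

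The supersingular case is what I expect to be the chief obstacle. Here $\hat{E}$ has height two and the $2$-adic structure of $\hat{E}(\fm_v)$ is no longer canonical. The hypotheses $3 \nmid v(2)$ and ``$v(a_1)$ is odd or $3v(a_1) \geq 2v(2)$'' are precisely what is required to control the Newton polygon of $[2](t) = 2t + a_1 t^2 + \cdots$: the interplay between the linear coefficient $2$, the coefficient involving $a_1$, and the degree-$4$ term that witnesses supersingularity pins down the $\F_2$-dimension of $\hat{E}(\fm_v)/2\hat{E}(\fm_v)$. Under these hypotheses one identifies the quotient with an explicit quotient of $\fm_v$, and the lower nice containment follows once more by lifting a generating set for $M_{1,v}$ and matching cardinalities. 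Tracking these Newton polygon conditions precisely over possibly wildly ramified $K_v$ will be the technical heart of the argument.
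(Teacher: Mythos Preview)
Your framework—split by reduction type, verify upper and lower niceness separately, use $|\im(\delta_{K_v})| = |E(K_v)[2]| \cdot 2^{[K_v:\Q_2]}$—matches the paper, but two steps do not go through as sketched.

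\textbf{Supersingular case.} You expect the technical heart to be a formal-group Newton-polygon analysis of $[2](t)$ to pin down $\hat E(\fm_v)/2\hat E(\fm_v)$ and then prove lower niceness. The paper's route is entirely different and much shorter: it shows that under the hypotheses $3 \nmid e$ and ($v(a_1)$ odd or $3v(a_1) \geq 2e$), the algebra $L_v$ is a cubic \emph{field}, i.e.\ $|E(K_v)[2]| = 1$. Niceness is then immediate (Proposition~\ref{proposition: cubic extension}): $M_{1,v} = \{[1]\}$ gives lower niceness for free, and upper niceness follows from the degree-$3$ norm relation $v(N(\delta_{K_v}(P))) = 2v(y(P))$. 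The Newton-polygon computation in the paper is applied to the $2$-division polynomial $F(x)$, not to $[2](t)$: one checks that no root of $F$ can have valuation in $\tfrac12\Z$ unless the hypotheses fail. Your proposed route through $\hat E(\fm_v)/2\hat E(\fm_v)$ would at best recover the same conclusion indirectly and is not needed.

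\textbf{Upper niceness, ordinary case.} The assertion that $x(P) - \theta_w$ is a unit whenever $P$ has nonsingular reduction is false. After normalizing to $a_1 = 1$, one root $\alpha$ of $F$ has $v(\alpha) = -2e$, but the other two satisfy $\beta, \gamma \in O(2)$; for $P \in E_0(K_v) \setminus E_1(K_v)$ the valuation of $x(P) - \beta$ is nonnegative with no a priori parity constraint. The paper repairs this by passing to an unramified extension $K'/K_v$ over which the reduction of $P$ lies in $2\wt E(k')$, which reduces the question to $E_1$ (where your formal-group expansion does work), and then descends using that $K'/K_v$ is unramified. For lower niceness in the ordinary case the paper carries out an explicit case analysis according to the splitting of $L_v$ (Lemmas~\ref{lemma: ordinary split}--\ref{lemma: ordinary non-split ramified}) rather than invoking $\hat E \simeq \hat{\bG}_m$; for multiplicative reduction it quotes Brumer--Kramer's description of $\im(\delta_{K_v})$ rather than Tate uniformization, though these last two points are close in spirit to what you propose.
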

Note that in the case (2) we prove that $L_v$ is a cubic extension of $K_v$, so it  is a special case of Proposition \ref{proposition: BPT}. 
It remains an interesting question how sharp the conditions in Theorem \ref{thm} are, in particular, to find examples of $E$ which are not nice at $v$ when the additional requirement in (2) or (3) is violated.

\ms
As an application, we consider the following situation: Suppose that $K$ is quadratic. Then the conditions in (2) are automatically satisfied when $E$ has good supersingular reduction at even primes. Thus, if $E/K$ has semistable reduction at all even primes and the minimal discriminant of $E/K_v$ has odd or zero valuation for all primes $v$, then we may replace $L(S, 2)$ by $M_2$ in the computation of the $2$-Selmer rank of $E$. Furthermore, if the minimal discriminant of $E/K$ is totally negative then the semi-narrow class group of $L$ is equal to the narrow class group of $L$. Note that in SAGE \cite{Sa20} the computation of the narrow class group of $L$ is much faster than that of the $2$-Selmer rank of $E$.
In Section \ref{section: examples} we provide some examples in this direction.


\ms
\subsection{Notation}\label{subsection: notation} For an abelian group $A$ and its element $a$, let $[a]$ denote the coset represented by $a$ of the factor group $A/2A$ (or $A/{A^2}$ if the group law is written multiplicatively). 

Let $K$ be any number field. For a finite prime $v$ of $K$, we denote by $v:K_v^\times \to \Z$ the normalized valuation sending a uniformizer of $\cO_{K_v}$ to $1$. We often abuse the notation and write $v(\a)$ for $\a \in K^\times$ for the normalized valuation of the image of $\a$ in $K_v^\times$.  Also, we write $\a_v$ for the image of $\a$ by the completion $K \inj K_v$ when $v$ is a finite prime.
On the other hand, for an infinite prime $v$ of $K$ we denote by $v(\a)$ the image of $\a$ by the completion $K \inj K_v$.

We say a finite prime $v$ is \textit{even} (resp. \textit{odd}) if it lies above $2$ (resp. otherwise).

\ms
\section{Modified ideal class groups}\label{section: modified ideal class groups}
In this section, we introduce various modified class groups and compute the sizes of $M_1$ and $M_2$ in terms of a \textit{semi-narrow class group}.

\ms
As in the previous section, let $K$ be a number field and $L=K[x]/{(F(x))}$ a cubic extension of $K$.
\subsection{Semi-narrow class group}\label{subsection : semi-narrow class group}
Let $v$ be a real prime of $K$. As in \cite{BPT}, we define the following.

\begin{definition}\label{definition : real prime v1 v2 v3}
We say \textit{$v$ is ramified} (resp. \textit{unramified}) if $L_v \simeq \R \times \C$ (resp. $L_v \simeq \R \times \R \times \R$).
When $v$ is ramified, we denote by $\wt{v}$ the unique real prime above $v$. If $v$ is unramified, then we can write $F(x)=(x-\g_1)(x-\g_2)(x-\g_3)$ with $\g_i \in \R$ and $\g_1 < \g_2 < \g_3$. We fix an isomorphism $L_v \simeq \R\times \R \times \R$ given by $g(x) \mapsto (g(\g_1), g(\g_2), g(\g_3))$ and we denote by $\wt{v}$ (resp. $\wt{v}_2$ and $\wt{v}_3$)  the one corresponding to the first (resp. second and third) component.
\end{definition}

There is the canonical map $L^\times \to \sel {L_\R}$ induced by the sign map. More precisely, let $A$ (resp. $B$) be the set of the ramified (resp. unramified) real primes of $K$. Then we may identify $\sel {L_\R}$ with $\prod_{v \in A} \{ \pm 1\} \times \prod_{v \in B} (\{ \pm 1 \} \times \{ \pm 1 \} \times \{ \pm 1 \})$ and so we have
\begin{equation*}
\sign : L^\times \to \sel {L_\R}=\prod_{v \in A} \{ \pm 1\} \times \prod_{v \in B} (\{ \pm 1 \} \times \{ \pm 1 \} \times \{ \pm 1 \}).
\end{equation*}

Now, we consider two subgroups $\wt{V}$ and $\wt{V}'$ of $\sel {L_\R}$ as follows:
\begin{equation*}
\begin{split}
\wt{V}&:=\prod_{v \in A} \{ 1\} \times \prod_{v \in B} \{(1, 1, 1), (1, -1, -1)\},\\
\wt{V}'&:=\prod_{v \in A} \{\pm 1\} \times \prod_{v \in B} \{(1, 1, 1), (1, -1, -1), (-1, 1, 1), (-1, -1, -1)\}.
\end{split}
\end{equation*}
Also, we define
\begin{equation*}
P_L^\infty:=\sgn^{-1}(\wt{V}) \qa P_L^0 :=\sgn^{-1}(\wt{V}').
\end{equation*}
\begin{remark}
By \cite[Prop. 3.7]{BK77}, the group $\wt{V}$ is the one related to the archimedean local conditions for $\Sel_2(E/K)$. On the other hand, the group $\wt{V}'$ is chosen for the following reason. In Subsection \ref{subsection : m0minfty} we define $M_0$ and $M_\infty$, which are groups of quadratic characters of $L$ with ``archimedean local conditions'' corresponding to $P_L^0$ and $P_L^\infty$, respectively. 
It turns out that (see the proof of Lemma \ref{lemma: M0 and class group})
$$M_0 \cong  \Hom(\Gal(H_L^\infty/L), ~\mu_2) \qa M_\infty  \cong  \Hom(\Gal(H_L^0/L), ~\mu_2),$$
where $H_L^\infty$ and $H_L^0$ are the Hilbert class fields defined in Definition \ref{definition : variousclassgroup} below. 
Note the switch between the indexes ``$0$'' and ``$\infty$''. 
In particular,  Lemma \ref{lemma: M0 and class group} pins down the choice of $\wt{V}'$ from $\wt{V}$ for the computational purpose.
\end{remark}

Note that for any $\alpha \in L^\times$, we say it is \textit{totally positive}, denoted by $\a \gg 0$, if $w(\a)>0$ for all real primes $w$ of $L$. For simplicity, let $P_L:=L^\times$, and let $P_L^+:=\{ \a \in P_L : \a \gg 0 \}$. Then by definition we have
\begin{equation*}
P_L^+ \subset P_L^\infty \subset P_L^0 \subset P_L
\end{equation*}
and each quotient is an elementary abelian $2$-group. Moreover, it follows from the definition that
\begin{equation*}
\begin{split}
P_L^0 & =\{ \a \in P_L : \wt{v}_2(\a) \wt{v}_3(\a) >0  \text{ for all unramified real primes $v$ of $K$} \},\\
P_L^\infty & =\{ \a \in P_L : \wt{v}(\a) > 0 \text{ and } v(N(\a))>0 \text{ for all real primes $v$ of $K$} \}.
\end{split}
\end{equation*} 
Since the sign map is surjective, it is straightforward to check that  
\begin{equation*}
[P_L : P_L^0]=2^b, [P_L^0 : P_L^\infty]=2^{a+b}  \text{ and } [P_L^\infty : P_L^+]=2^b,
\end{equation*}
where $a=|A|$ and $b=|B|$.

\ms
\begin{definition}\label{definition : variousclassgroup}
Let $\star \in \{ \emptyset, +, 0, \infty \}$, and let $\cP_L^\star:=\{ (\a) \in \cF_L : \a \in P_L^\star \}$,
where $\cF_L$ is the group of fractional ideals  of $L$.\footnote{We use a capital Roman letter for the set of certain elements and the corresponding capital calligraphic letter for the set of principal fractional ideals generated by its elements.} Also, let $C^\star_L:=\cF_L/{\cP_L^\star}$ and let $H_L^\star$  be the class field of $L$ with respect to $C^\star_L$.
\end{definition}

\begin{remark}
The group $C^+_L$ is usually called the \textit{narrow class group} of $L$. If all the real primes of $K$ are ramified then $C^\infty_L=C^+_L$. Thus, we call $C^\infty_L$ the \textit{semi-narrow class group} of $L$, which is used in our title.
\end{remark}

Similarly as above, let $P_K$, $P_K^+$, $\cF_K$, $\cP_K$ and $\cP_K^+$ be the corresponding groups of $K$.  Also, let $C^\star_K:=\cF_K/{\cP_K^\star}$ and $H_K^\star$ for $\star \in \{ \emptyset, + \}$. Then we can easily check that $[P_K : P_K^+ ]=2^{a+b}$.

\ms
\subsection{The groups $M_0$ and $M_\infty$}
\label{subsection : m0minfty}
For $\star \in \{0, \infty \}$, let 
\begin{equation*}
M_\star:=\{ [\a] \in \sel L : ~\text{$L(\sqrt{\a})/L$ is unramified at all finite primes and } \a \in P_L^\star \}.
\end{equation*}

\begin{lemma}\label{lemma: M0 and class group}
We have
\begin{equation*}
M_0 \simeq C^\infty_L/{2C^\infty_L}\qa M_\infty \simeq C^0_L/{2C^0_L},
\end{equation*}
and hence $|M_0|=|C^\infty_L[2]|$ and $|M_\infty|=|C^0_L[2]|$.
\end{lemma}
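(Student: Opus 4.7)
The plan is to identify $M_0$ with $\Hom(C^\infty_L, \mu_2)$ via Kummer theory and class field theory, and then conclude using the fact that $\Hom(A,\mu_2) \cong A/2A$ (non-canonically) for finite abelian $A$. Setting
$$N := \{[\alpha] \in \sel L : L(\sqrt\alpha)/L \text{ is unramified at all finite primes}\},$$
the narrow Hilbert class field $H_L^+$, which by class field theory is the maximal abelian extension of $L$ unramified at all finite primes with $\Gal(H_L^+/L) \cong C_L^+$, lets me attach to each $[\alpha] \in N$ a quadratic character $\chi_\alpha : C_L^+ \to \mu_2$ cutting out $L(\sqrt\alpha) \subseteq H_L^+$; by Kummer theory this gives a group isomorphism $N \cong \Hom(C_L^+, \mu_2)$. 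Because the surjection $C_L^+ \twoheadrightarrow C_L^\infty$ has kernel $\cP^\infty_L/\cP^+_L$, its dual subgroup $\Hom(C_L^\infty, \mu_2) \hookrightarrow \Hom(C_L^+,\mu_2)$ is cut out by the condition $\chi_\alpha((\beta)) = 1$ for every $\beta \in P_L^\infty$. The goal is therefore to show that this is equivalent to $\alpha \in P_L^0$, which identifies $M_0$ with $\Hom(C_L^\infty,\mu_2)$.

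The core computation uses the global product formula $\prod_w (\alpha,\beta)_w = 1$ for Hilbert symbols. For $[\alpha] \in N$, $v_w(\alpha) \in 2\Z$ at every finite prime $w$, so locally $\alpha$ may be taken to be a unit; the unramifiedness of $L(\sqrt\alpha)/L$ at every finite prime then identifies $\prod_{w \nmid \infty}(\alpha,\beta)_w$ with the Artin symbol $\chi_\alpha((\beta))$, while at real $w$ one has the standard $(\alpha,\beta)_w = (-1)^{s_w(\alpha)s_w(\beta)}$ (and triviality at complex places). This gives
$$\chi_\alpha((\beta)) = \prod_{w \text{ real place of } L}(-1)^{s_w(\alpha)s_w(\beta)},$$
where $s_w(x) \in \F_2$ equals $0$ if $x>0$ at $w$ and $1$ otherwise. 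Since the sign map $\sgn : L^\times \to \sel{L_\R} \cong \F_2^{a+3b}$ is surjective by weak approximation, as $\beta$ ranges over $P_L^\infty$ its sign vector covers exactly $\wt{V}$. Thus $\chi_\alpha$ factors through $C_L^\infty$ iff $\sgn(\alpha)$ lies in the orthogonal complement $\wt{V}^{\perp}$ under the standard $\F_2$-pairing.

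A direct case-by-case inspection of the real places (one place of $L$ above each ramified real $v$ of $K$, three above each unramified $v$) shows that $\wt{V}$ and $\wt{V}'$ are mutual orthogonal complements in $\F_2^{a+3b}$, with respective dimensions $b$ and $a+2b$. Hence $\sgn(\alpha) \in \wt{V}^{\perp} = \wt{V}'$ iff $\alpha \in P_L^0$, which is precisely the defining condition for $[\alpha] \in M_0$. Combining everything yields $M_0 \cong \Hom(C_L^\infty,\mu_2) \cong C_L^\infty/2C_L^\infty$ and $|M_0| = |C_L^\infty[2]|$. The symmetric assertion $M_\infty \cong C_L^0/2C_L^0$ follows from the same argument with the roles of $\wt{V}$ and $\wt{V}'$ swapped; this mutual duality also conceptually explains the switch of indices $0$ and $\infty$ noted in the remark preceding the lemma. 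The main technical difficulty lies in the Hilbert-symbol step, particularly at even primes where local Kummer theory is more subtle; but the hypothesis of unramifiedness at all finite primes, including even ones, ensures that the local character of $\chi_\alpha$ coincides with the local Hilbert symbol uniformly, so the product formula applies without modification.
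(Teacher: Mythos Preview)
Your proof is correct and takes a genuinely different route from the paper's. The paper argues directly: it characterizes $H_L^\infty$ as the maximal abelian extension of $L$ unramified at all finite primes whose quadratic subextensions are, for each unramified real $v$ of $K$, either unramified at both $\wt{v}_2,\wt{v}_3$ or ramified at both; it then checks by inspection that the quadratic subfields of $H_L^\infty$ are exactly the $L(\sqrt{\alpha})$ with $[\alpha]\in M_0$, giving the Kummer isomorphism $M_0\cong\Hom(\Gal(H_L^\infty/L),\mu_2)$ in one stroke, and repeats the argument for $M_\infty$. Your approach instead starts from the larger group $N\cong\Hom(C_L^+,\mu_2)$, uses global Hilbert reciprocity to rewrite $\chi_\alpha((\beta))$ as the sign pairing $\sum_w s_w(\alpha)s_w(\beta)$ over real places, and then singles out $\Hom(C_L^\infty,\mu_2)$ inside $N$ by the orthogonal-complement computation $\wt V^{\perp}=\wt V'$ in $\F_2^{a+3b}$. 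The paper's argument is shorter and leans on the class-field-theoretic description of $H_L^\infty$; your argument is longer but makes the duality completely explicit and, as you note, furnishes a clean conceptual explanation for the $0\leftrightarrow\infty$ index swap---namely that $\wt V$ and $\wt V'$ are each other's annihilators under the archimedean sign pairing---something the paper only asserts in a remark.
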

\begin{proof}
By the class field theory, the field $H_L^\infty$ is the maximal abelian extension of $L$ satisfying
\begin{itemize}[--]
\item
it is unramified at all finite primes, and
\item
for any unramified real place $v$ of $K$, every quadratic subextension of $H_L^\infty/H_L$ is either unramified both at $\wt{v}_2$ and $\wt{v}_3$, or ramified both at $\wt{v}_2$ and $\wt{v}_3$.
\end{itemize}

Let $v$ be an unramified real prime of $K$ and $\a \in P_L^0$. Since $\wt{v}_2(\a)\wt{v}_3(\a)>0$, either $L(\sqrt{\a})$ is unramified both at $\wt{v}_2$ and $\wt{v}_3$, or ramified both at $\wt{v}_2$ and $\wt{v}_3$. Thus, for any $\alpha \in M_0$,
$L(\sqrt{\a})$ is a subfield of $H_L^\infty$. By Kummer theory, any quadratic subfield of $H_L^\infty$ is of the form $L(\sqrt{\a})$ for some $[\a] \in M_0$. Thus, we have an isomorphism
\begin{equation*}
g : M_0 \to \Hom(\Gal(H_L^\infty/L), ~\mu_2)
\end{equation*}
sending $[\a]$ to the character $\chi$ such that $(H_L^\infty)^{\ker (\chi)}=L(\sqrt{\a})$. Since $\Hom(\Gal(H_L^\infty/L), ~\mu_2) \simeq C^\infty_L/{2C^\infty_L}$ (not canonical though), the first isomorphism follows. By the same argument, the second also follows.

Since $C^\infty_L$ is finite, we have $|C_L^\infty/{2C_L^\infty}|=|C_L^\infty[2]|$ and similarly for $C_L^0$. This completes the proof.
\end{proof}

\ms
\subsection{The cardinality of $M_1$}
In this subsection, we prove the following, which implies the first equality of Theorem \ref{theorem: main theorem sizes of M1 and M2} by Lemma \ref{lemma: M0 and class group}.

\begin{proposition}\label{proposition: index of M0 Minfty M_1}
There is an isomorphism
\begin{equation*}
\frac{M_0}{M_1} \simeq C^+_K/{2C^+_K}
\end{equation*}
and hence $|M_1| = |M_0| \times |C^+_K[2]|^{-1}$.
\end{proposition}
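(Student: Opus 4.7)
The plan is to construct the norm map
$$\phi : M_0 \to \sel K, \quad [\a] \mapsto [N_{L/K}(\a)],$$
show that $\ker\phi = M_1$, and identify $\im \phi$ with
$$V_K^+ := \{ [\b] \in \sel K : K(\sqrt\b)/K \text{ is unramified at every finite prime} \}.$$
By repeating the argument of Lemma \ref{lemma: M0 and class group} for the narrow Hilbert class field $H_K^+/K$ (whose quadratic subextensions are exactly the $K(\sqrt\b)$ with $[\b] \in V_K^+$), one obtains $V_K^+ \simeq \Hom(\Gal(H_K^+/K), \mu_2) \simeq C^+_K/{2C^+_K}$, which has order $|C^+_K[2]|$.

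For the kernel, $M_1 \subseteq \ker\phi$ is immediate. Conversely, if $[\a] \in M_0$ satisfies $N(\a) \in (K^\times)^2$, then $N(\a) \gg 0$; hence $\wt{v}(\a) > 0$ at every real prime $v$ of $K$, using $\wt{v}_2(\a)\wt{v}_3(\a) > 0$ from $\a \in P_L^0$ at unramified $v$, and the factorization $v(N(\a)) = \wt{v}(\a) \cdot |\cdot|^2$ (where $|\cdot|$ is the modulus of the $\C$-component) at ramified $v$. Thus $\a \in P_L^\infty$ and $[\a] \in M_1$. For surjectivity of $\phi$ onto $V_K^+$, given $[\b] \in V_K^+$ I would set $\a := \b$ via the inclusion $K \subset L$: then $\wt{v}_i(\b) = v(\b)$, so $\a \in P_L^0$, while $L(\sqrt\b) = L \cdot K(\sqrt\b)$ is unramified at every finite prime of $L$ as the compositum with an extension of $K$ unramified at every finite prime. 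Thus $[\a] \in M_0$ and $\phi([\a]) = [\b^{[L:K]}] = [\b]$.

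The main step is the inclusion $\im \phi \subseteq V_K^+$: for any $[\a] \in M_0$ and any finite prime $v$ of $K$, one must show $K(\sqrt{N(\a)})/K$ is unramified at $v$. For odd $v$ this is easy: $w(\a_w)$ is even for each $w \mid v$ (since $L_w(\sqrt{\a_w})/L_w$ is unramified), so $v(N(\a)) = \sum_{w\mid v} f_w \cdot w(\a_w)$ is even. The even $v$ case is the main obstacle, which I plan to handle via the projection formula for the local Hilbert symbol: since $L_w(\sqrt{\a_w})/L_w$ is unramified, $(x,\a_w)_w = 1$ for every $x \in \cO_{L_w}^\times$, so applying the projection formula to $a \in \cO_{K_v}^\times \subset \cO_{L_w}^\times$ yields $(a, N_{L_w/K_v}(\a_w))_v = (a,\a_w)_w = 1$; taking the product over $w \mid v$ gives $(a, N(\a))_v = 1$ for every $a \in \cO_{K_v}^\times$, which by local class field theory says $K_v(\sqrt{N(\a)})/K_v$ is unramified. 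Combining everything, $\phi$ induces $M_0/M_1 \xrightarrow{\sim} V_K^+ \simeq C^+_K/{2C^+_K}$, and taking orders yields $|M_1| = |M_0|/|C^+_K[2]|$.
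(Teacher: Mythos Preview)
Your proposal is correct and follows the same overall architecture as the paper's proof: build the norm map from $M_0$ to the quadratic characters of $\Gal(H_K^+/K)$, identify its kernel with $M_1$ (via the same positivity argument at real places), and prove surjectivity by pulling $\b\in K^\times$ back along $K\hookrightarrow L$ and using $[L:K]=3$.

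The one genuine difference is the treatment of even primes in the inclusion $\im\phi\subseteq V_K^+$. The paper proceeds elementarily through Lemma~\ref{lemma: unramified condition at 2}: since $L_w(\sqrt{\a})/L_w$ is unramified, one can write $\a\b^2=x^2+4y$ with $x\in\cO_L^\times$ and $y\in\cO_L$; taking norms gives $N(\a)N(\b)^2=N(x)^2+4y'$ with $y'\in\cO_K$, and another application of the lemma over $K_v$ yields that $K(\sqrt{N(\a)})/K$ is unramified at $v$. Your route through the projection formula $(a,\a_w)_{L_w}=(a,N_{L_w/K_v}(\a_w))_{K_v}$ for $a\in K_v^\times$ is a clean, more conceptual alternative: it replaces the explicit ``square mod $4$'' manipulation with a one-line appeal to local class field theory, and it would generalize without change to higher power residue symbols. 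The paper's argument, by contrast, is entirely self-contained and avoids invoking the Hilbert symbol machinery.
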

\begin{proof} 
We claim that for any $[\a] \in M_0$ the extension field $K(\sqrt{N(\a)})$ is a subfield of $H_K^+$. 
This is proven in the proof of \cite[Lem. 5.2]{Sc94}, but we provide a complete proof for the convenience of the readers.

Let $[\a] \in M_0$. Since $L(\sqrt{\a})/L$ is unramified everywhere, $w(\a)$ is even for all finite primes $w$ of $L$. Thus, $v(N(\a))$ is also even for all finite primes $v$ of $K$ and hence $K(\sqrt{N(\a)})/K$ is unramified at all odd primes $v$ of $K$. Let $v$ be an even prime of $K$, and let $w$ be a prime of $L$ above $v$. Since $L(\sqrt{\a})/L$ is unramified at $w$, by Lemma \ref{lemma: unramified condition at 2} below and the weak approximation theorem we have $\a\b^2=x^2+4y$ for some $\b \in L^\times$, $x \in \cO_{L}^\times$ and $y \in \cO_{L}$. Thus,
\begin{equation*}
 N(\a)\cdot N(\b)^2=N(\a \b^2)=N(x)^2+4y' \text{ for some } y' \in \cO_K.
 \end{equation*}
By Lemma \ref{lemma: unramified condition at 2}, $K(\sqrt{N(\a)})=K(\sqrt{N(\a\b^2)})$ is unramified at $v$.
This proves the claim. 

As a result, we have a group homomorphism
\begin{equation*}
f : M_0 \to \Hom(\Gal(H_K^+/K), \mu_2)
\end{equation*}
sending $[\a] \in M_0$ to the character $\chi$ such that $(H_K^+)^{\ker (\chi)}=K(\sqrt{N(\a)})$. We claim that $f$ is surjective. Let $\chi \in \Hom(\Gal(H_K^+/K), \mu_2)$ and let $K'=(H_K^+)^{\ker (\chi)}$. Then there is an element $\a \in K^\times$ such that $K' \simeq K(\sqrt{\a})$. 
Since $K(\sqrt{\a})/K$ is unramified at all finite primes, so is $L(\sqrt{\a})/L$. Since $\wt{v}_2(\a)=\wt{v}_3(\a)$ for any unramified real primes $v$ of $K$ (as $\a \in K^\times$), we have $\a \in P_L^0$ and hence $[\a] \in M_0$. 
Since $L/K$ is of degree $3$ and $\a$ is chosen in $K^\times$, we have $N(\a)=\a^3$. Thus, we have $K(\sqrt{N(\a)})=K(\sqrt{\a^3})=K(\sqrt{\a})$, which is isomorphic to $K'$. Hence $f([\a])=\chi$, as claimed.

To prove the first assertion, it suffices to show that $\ker (f)=M_1$. It is easy to see that $M_1 \subset \ker(f)$.
Conversely, suppose that $[\a] \in \ker (f)$ for some $\a \in P_L^0$, i.e., $N(\a)$ is a square. Then we have $N(\a) \gg 0$. Since $\a \in P_L^0$ and $N(\a) \gg 0$, we have $\wt{v}(\a)>0$ for all real primes $v$ of $K$ as well. Thus, we have $\a \in P_L^\infty$ and $[\a] \in M_1$, as desired. This proves the first assertion. The second follows from the finiteness of $C^+_K$.
\end{proof}
\begin{remark}
Similarly, we can prove $M_\infty/{M_1} \simeq C_K/{2C_K}$ and hence $[M_0 : M_\infty]=\frac{|C^+_K[2]|}{|C_K[2]|}$. 
\end{remark}

\begin{lemma}\label{lemma: unramified condition at 2}
Let $H/{\Q_2}$ be a finite extension. Then for $\a \in \cO_H^\times$, the extension $H(\sqrt{\a})/H$ is unramified if and only if
$\a \equiv u^2 \pmod {4\cO_H}$ for some $u \in \cO_{H}^\times$.
\end{lemma}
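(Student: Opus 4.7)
The lemma is the standard characterization of unramified quadratic extensions at a place of residue characteristic $2$. My plan is to handle the two directions separately: for the ``if'' direction by an explicit substitution producing a monic polynomial of unit discriminant, and for the ``only if'' direction by invoking the structure theorem for unramified local extensions combined with a short valuation count.

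For $(\Leftarrow)$, I would write $\alpha = u^2 + 4\delta$ with $u \in \cO_H^\times$ and $\delta \in \cO_H$. Since $H(\sqrt{\alpha}) = H(\sqrt{\alpha/u^2})$, we may replace $\alpha$ by $\alpha/u^2$ and assume $\alpha = 1 + 4\gamma$ with $\gamma \in \cO_H$. Then $Y := (\sqrt{\alpha} - 1)/2$ is a root of the monic polynomial $f(X) := X^2 + X - \gamma \in \cO_H[X]$, and $H(\sqrt{\alpha}) = H(Y)$. The discriminant of $f$ equals $1 + 4\gamma = \alpha$, a unit in $\cO_H$, so the extension $H(Y)/H$ is either trivial or unramified, as desired.

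For $(\Rightarrow)$, if $\alpha$ is already a square in $H^\times$, pick $u \in \cO_H^\times$ with $u^2 = \alpha$. Otherwise $M := H(\sqrt{\alpha})$ is a nontrivial unramified quadratic extension. By Hensel's lemma applied to a lift of a generator of the residue field extension $l/k$, we have $\cO_M = \cO_H[\theta]$ for some $\theta$ whose monic minimal polynomial $f(X) = X^2 + aX + b$ has coefficients in $\cO_H$ and discriminant $\Delta := a^2 - 4b$ a unit in $\cO_H$. Since $M = H(\sqrt{\Delta}) = H(\sqrt{\alpha})$, we may write $\alpha = c^2 \Delta$ for some $c \in H^\times$; because both $\alpha$ and $\Delta$ are units, $c \in \cO_H^\times$. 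The decisive observation is that $v_H(4b) \geq 2 v_H(2) \geq 2$ while $\Delta$ is a unit, which forces $v_H(a) = 0$, i.e., $a \in \cO_H^\times$. Hence $\alpha = (ca)^2 - 4c^2 b \equiv (ca)^2 \pmod{4 \cO_H}$ with $u := ca \in \cO_H^\times$.

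The only real obstacle is the valuation estimate in $(\Rightarrow)$ that forces the linear-term coefficient $a$ to be a unit; this step uses essentially that the residue characteristic is $2$, so that $4b$ automatically has valuation at least $2$. Everything else reduces to Hensel's lemma and the monogenicity of unramified local extensions.
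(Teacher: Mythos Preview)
Your proof is correct. The paper itself does not supply an argument for this lemma; it merely notes that the result is elementary and points to \cite[Prop.~4.8]{DV18}. Your self-contained proof---the Artin--Schreier substitution $Y=(\sqrt{\alpha}-1)/2$ for the ``if'' direction, and the monogenicity of the unramified quadratic extension plus the valuation observation $v_H(4b)\geq 2v_H(2)>0$ forcing $a\in\cO_H^\times$ for the ``only if'' direction---is exactly the standard argument one expects behind such a citation, and it stands on its own without appeal to outside sources.
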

\begin{proof}
This is elementary, for example, see \cite[Prop. 4.8]{DV18}.
\end{proof}

\ms
\subsection{The cardinality of $M_2$}
In this subsection, we prove the second equality of Theorem \ref{theorem: main theorem sizes of M1 and M2}. 
In order to do it, we use two natural maps\footnote{The map $\g$ is well-known, for example in \cite[(3.4)]{DV18}, \cite[Lem. 2.17]{Li19} and \cite[Lem. 2.13]{BPT}.}
\begin{equation*}
\g : M_2 \to C_L[2] \qa \pi : C^\infty_L[2] \to C_L[2].
\end{equation*}
By computing the precise kernels of two maps, and comparing their images, we have the following. 
\begin{proposition}\label{proposition: index of M2 M0}
We have 
\begin{equation*}
\frac{|M_2|}{|C_L^\infty[2]|}=\frac{2^{[K:\Q]}}{|C^+_K[2]|}.
\end{equation*}
\end{proposition}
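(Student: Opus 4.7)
The plan is to introduce the auxiliary group
\[
M_2' := \{[\alpha] \in \sel{L} : (\alpha) = I^2 \text{ for some } I \in \cF_L,\ \alpha \in P_L^\infty\},
\]
which is $M_2$ with the condition $N(\alpha) \in (K^\times)^2$ dropped. The norm induces $N : M_2' \to \sel{K}$ whose kernel is exactly $M_2$, so $|M_2| = |M_2'|/|N(M_2')|$. The task therefore splits into computing the two factors separately; in each case I will apply the strategy introduced in the excerpt, comparing a $\gamma$-type map with the projection $\pi$ of a (semi-)narrow class group onto the ordinary one.

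For $|M_2'|$, the hint's map extends verbatim to $\gamma' : M_2' \to C_L[2]$. The critical observation is that $\text{Im}(\gamma') = \text{Im}(\pi)$: given $[I] \in \text{Im}(\pi)$ lifted by $[J]_\infty \in C_L^\infty[2]$ with $J^2 = (\beta)$ and $\beta \in P_L^\infty$, the relation $I = J(\delta)$ forces $I^2 = (\beta\delta^2)$ with $\beta\delta^2 \in P_L^\infty$, so $[I] \in \text{Im}(\gamma')$; the reverse inclusion is immediate by taking $J = I$ and $\beta = \alpha$. Matching the images reduces the ratio to kernels: $|M_2'|/|C_L^\infty[2]| = |\ker\gamma'|/|\ker\pi|$. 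Setting $\cO_L^{\infty,\times} := \cO_L^\times \cap P_L^\infty$, a routine check gives $\ker\gamma' \cong \cO_L^{\infty,\times}/(\cO_L^\times)^2$, while the short exact sequence $0 \to \cP_L/\cP_L^\infty \to C_L^\infty \to C_L \to 0$ (together with $(L^\times)^2 \subset P_L^\infty$, making $\cP_L/\cP_L^\infty$ a $2$-group) identifies $\ker\pi$ with $\cP_L/\cP_L^\infty$. Under the sign-map isomorphism $L^\times/P_L^\infty \simeq \sel{L_\R}/\wt V$, the image of $\cO_L^\times$ has order $[\cO_L^\times : \cO_L^{\infty,\times}]$, which appears in both numerator and denominator and cancels, leaving
\[
\frac{|M_2'|}{|C_L^\infty[2]|} \;=\; \frac{|\cO_L^\times/(\cO_L^\times)^2|}{|\sel{L_\R}/\wt V|} \;=\; \frac{2^{r_1^L + r_2^L}}{2^{a+2b}} \;=\; 2^{a+b+3c},
\]
where $a$, $b$, $c$ count the ramified real, unramified real, and complex primes of $K$, and $r_1^L = a+3b$, $r_2^L = a+3c$.

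For $|N(M_2')|$, I will show the image equals $M_2^K := \{[\beta] \in \sel{K} : (\beta) \in \cF_K^2,\ \beta \gg 0\}$. The inclusion $N(M_2') \subseteq M_2^K$ is a direct check on each $N_{L_v/K_v}$: the explicit form of $P_L^\infty$, together with the norm on $\R \times \C$ or $\R \times \R \times \R$, forces $N(\alpha) > 0$ at every real place of $K$. For the reverse inclusion, given $[\beta] \in M_2^K$, the element $\alpha := \beta \in K \subset L$ itself lies in $M_2'$ (its ideal is $(J\cO_L)^2$, and $\beta \gg 0$ places its $L$-signs in $\wt V$), with $N(\alpha) = \beta^3 \equiv \beta \pmod{(K^\times)^2}$. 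Applying the argument of the previous paragraph in the degenerate setting ``$L = K$'' (where $\wt V$ is trivial and $\cO_L^{\infty,\times}$ is replaced by $\cO_K^{+,\times}$) yields $|M_2^K|/|C_K^+[2]| = 2^{r_1^K + r_2^K}/2^{r_1^K} = 2^c$. Combining,
\[
\frac{|M_2|}{|C_L^\infty[2]|} \;=\; \frac{2^{a+b+3c}}{2^c \cdot |C_K^+[2]|} \;=\; \frac{2^{[K:\Q]}}{|C_K^+[2]|}.
\]

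The main obstacle I anticipate is establishing the equality $\text{Im}(\gamma') = \text{Im}(\pi)$; it is precisely this identification that causes the unit index $[\cO_L^\times:\cO_L^{\infty,\times}]$ to cancel, turning an a priori delicate class-group computation into a pure rank calculation via Dirichlet's unit theorem and the explicit signature of $L$.
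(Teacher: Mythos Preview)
Your proof is correct and uses the same basic ingredients as the paper (the maps the paper calls $\gamma_L$, $\pi$, and the norm $g_2 : M_L \to M_K$; your $M_2'$ and $M_2^K$ are exactly the paper's $M_L$ and $M_K$), but you organize the computation differently. The paper restricts $\gamma_L$ to $M_2$ to get $\gamma : M_2 \to C_L[2]$, and then must compare $\im(\gamma)$ with $\im(\pi)$: it proves $\im(\pi)/\im(\gamma) \simeq \im(\pi_K)$ via the isomorphism $M_L/(M_2 \cdot \ker\gamma_L) \simeq M_K/\ker\gamma_K$, and separately computes $|\ker\gamma|$ and $|\ker\pi|$ through a chain of sign-map lemmas involving $(\sel{\cO_L})_\ns$ and the auxiliary subgroup $\wt W$. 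You instead stay with $\gamma' = \gamma_L$ on all of $M_2'$, where the images agree with $\im(\pi)$ on the nose, so the $L$-side ratio reduces to a clean kernel comparison $|\cO_L^{\infty,\times}/(\cO_L^\times)^2| \big/ |\cP_L/\cP_L^\infty|$ in which the awkward index $[\cO_L^\times : \cO_L^{\infty,\times}]$ visibly cancels; the norm condition is then handled in one stroke by $|M_2| = |M_2'|/|M_2^K|$ and an identical computation over $K$. Your route is a bit more streamlined---it avoids the image-quotient step and the subgroup $\wt W$ entirely---while the paper's route has the virtue of isolating the structural isomorphism $\im(\pi)/\im(\gamma) \simeq \im(\pi_K)$ as a result in its own right.
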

We remark that the idea of using the maps $\gamma$ and $\pi$ is already appeared in \cite{BPT} (under the assumption that $K$ has an odd narrow class number) and we closely follow their strategy. Our contribution is to verify that it works for any number field $K$ (and we precisely compute the ratio of the images of two maps in Step 1 below).
For the convenience of the readers, we provide a complete proof.
We use the same notation as in Section \ref{subsection : semi-narrow class group}.

\ms
We prove the proposition by four steps. Before proceeding, we define precisely two morphisms $\pi$ and $\g$.

\ms
First, we consider the map $\wt{\pi} : C^\infty_L \to C_L$ sending $I \pmod {\cP_L^\infty}$ to $I \pmod {\cP_L}$ for any $I \in \cF_L$. Let $\pi$ be the restriction of $\wt{\pi}$ to $C^\infty_L[2]$. Since the kernel of $\wt{\pi}$ is $\frac{\cP_L}{\cP_L^\infty}$, which is an elementary abelian $2$-group, we have an exact sequence
\begin{equation}
\label{equation : ker pi}
\xymatrix{
0 \ar[r] & \frac{\cP_L}{\cP_L^\infty} \ar[r] & C^\infty_L[2] \ar[r]^-{\pi} & C_L[2].
}
\end{equation}
Similarly, we have a map $\pi_K : C^+_K[2] \to C_K[2]$. It can be easily checked that $\ker (\pi_K) = \frac{\cP_K}{\cP_K^+}$ and $|\ker(\pi_K)|=\frac{|C^+_K|}{|C_K|}$. 

\ms
Next, we construct a surjective map $\wt{f}$ from a subset of $P_L^\infty$ to $C^\infty_L[2]$ as follows: Since any element $[I] \in C^\infty_L[2]$ satisfies $I^2 \in \cP_L^\infty$, so we can find an element $\a \in P_L^\infty$ such that $(\a)=I^2$. 
So for $\a \in P_L^\infty$ with $(\a)=I^2$ for some $I \in \cF_L$, we set $\wt{f}(\a) :=I \pmod {\cP_L^\infty}$, which is well-defined. This map induces a surjective map $f : M_L^\infty \to C^\infty_L[2]$, where 
\begin{equation*}
M_L^\infty:=\{ [\a] \in {P_L^\infty}/{(P_L^\infty)^2} : (\a)=I^2 \text{ for some $I \in \cF_L$} \}.
\end{equation*}
Similarly, we have a surjective map $f_K : M_K^+ \to C^+_K[2]$, where
\begin{equation*}
M_K^+:=\{ [a] \in {P_K^+}/{(P_K^+)^2} : (a)=J^2 \text{ for some $J \in \cF_K$} \}.
\end{equation*}

\ms
Then, we consider the composition $\pi \circ f : M_L^\infty \to C_L[2]$. This map factors through 
\begin{equation*}
M_L := \{ [\a] \in \sel L : (\a)=I^2 \text{ for some $I \in \cF_L$} \qqa ~\a \in P_L^\infty\}
\end{equation*}
and let $\g_L : M_L \to C_L[2]$ be the map induced by $\pi \circ f$. Indeed, if $[\a] \in M_L$ and write $(\a)=I^2$, then $\g_L([\a])=I \pmod {\cP_L}$. Similarly, we have a map $\g_K : M_K \to C_K[2]$, where
\begin{equation*}
M_K:=\{ [a] \in \sel K : (a)=J^2 \text{ for some $J \in \cF_K$} \qqa ~a \in P_K^+\}.
\end{equation*}
We then define the map $\g$ by the restriction of $\g_L$ to $M_2$, i.e., $\g := \g_L |_{M_2} : M_2 \to C_L[2]$. 

\ms
Lastly, we have the map $N: \sel {L} \to \sel K$ induced by the norm map. It induces well-defined maps $g_1 : M_L^\infty \to M_K^+$ and $g_2 : M_L \to M_K$ sending $[\a]$ to $N([\a])$.

\ms
In summary, we have a commutative diagram
\begin{equation*}
\xyv{0.7}
\xyh{4}
\xymatrix{
M_L^\infty \ar@{->>}[rd]\ar@{->>}[rr]^{f} \ar[dd]^>>>>{g_1} & & C^\infty_L[2] \ar[rd]^-{\pi} \ar@{-->}[dd]&  \\
& M_L \ar@{->>}[dd]^>>>>{g_2} \ar[rr]^<<<<<<<<<<<<{\g_L} & &   C_L[2] \ar@{-->}[dd] \\
M_K^+ \ar@{->>}[rd]\ar@{->>}[rr]^>>>>>>>>{f_K} & & C^+_K[2] \ar[rd]^-{\pi_K} & \\
& M_K \ar[rr]^<<<<<<<<<<<<{\g_K} & & C_K[2].\\
}
\end{equation*}

\ms
\noindent
\textbf{$\bullet$ Step 1: Comparison of the images}. Since $f$ is surjective, we have $\im (\g_L)=\im (\pi)$ and hence $\im (\g) \subset \im(\pi)$. Moreover, we assert the following.
\begin{proposition}\label{proposition: poor inequality} 
We have 
\begin{equation*}
\frac{\im (\pi)}{\im (\g)} \simeq \im(\pi_K) \qa \frac{|\im (\pi)|}{|\im (\g)|}=\frac{|C^+_K[2]| \times |C_K|}{|C^+_K|}.
\end{equation*}
\end{proposition}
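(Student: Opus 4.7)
The plan is to use the norm map on ideal classes $N \colon C_L[2] \to C_K[2]$, $[I] \mapsto [N_{L/K}(I)]$, to build the claimed isomorphism. I will show that $N$ restricts to a surjection $\im(\pi) \twoheadrightarrow \im(\pi_K)$ whose kernel is exactly $\im(\g)$, and then read off the orders.

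First, for $[I] \in \im(\pi)$ write $I^2 = (\a)$ with $\a \in P_L^\infty$; since $\a \in P_L^\infty$ forces $N(\a) \in P_K^+$ directly from the definition, we obtain $N(I)^2 = (N(\a))$ with $N(\a) \in P_K^+$, so $[N(I)] \in \im(\pi_K)$. Surjectivity is immediate: given $[j] \in \im(\pi_K)$ with $j^2 = (a)$, $a \in P_K^+$, the lift $I := j \cO_L$ satisfies $I^2 = (a)$ with $a \in P_K^+ \subset P_L^\infty$, and $N(I) = j^3$, so $[N(I)] = [j]$ in $C_K[2]$.

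The main content is the identification $\ker(N|_{\im(\pi)}) = \im(\g)$. The inclusion $\im(\g) \subset \ker(N|_{\im(\pi)})$ is formal: if $I^2 = (\a)$ with $N(\a) = b^2 \in (K^\times)^2$, then $(N(I))^2 = (b)^2$, forcing $N(I) = (b)$ by unique factorisation in $\cF_K$ and hence $[N(I)] = 0$. For the reverse inclusion, suppose $[I] \in \im(\pi)$ satisfies $[N(I)] = 0$, so that $N(I) = (c)$ for some $c \in K^\times$; writing $I^2 = (\a)$ with $\a \in P_L^\infty$ gives $N(\a) = c^2 w$ for a unit $w \in \cO_K^\times$. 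The key observation is that $\a \in P_L^\infty$ forces $N(\a) \in P_K^+$, so $w = N(\a)/c^2$ is totally positive, i.e.\ $w \in \cO_K^\times \cap P_K^+$. Then $\a' := \a w$ satisfies $(\a') = I^2$ (since $(w) = (1)$), $\a' \in P_L^\infty$ (product of an element of $P_L^\infty$ with a totally positive element of $K \subset L$), and
\[
N(\a') \;=\; N(\a) \cdot w^3 \;=\; c^2 w \cdot w^3 \;=\; (c w^2)^2 \;\in\; (K^\times)^2,
\]
so $[\a'] \in M_2$ with $\g([\a']) = [I]$, showing $[I] \in \im(\g)$.

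Combining these steps produces the isomorphism $\im(\pi)/\im(\g) \xrightarrow{\sim} \im(\pi_K)$. For the order formula I would apply the first isomorphism theorem to $\pi_K$, using the fact $|\ker(\pi_K)| = |C_K^+|/|C_K|$ already recorded in the text just after \eqref{equation : ker pi}, to obtain
\[
|\im(\pi_K)| \;=\; \frac{|C_K^+[2]|}{|\ker(\pi_K)|} \;=\; \frac{|C_K^+[2]| \, |C_K|}{|C_K^+|},
\]
which matches the claimed equality. The main obstacle is the reverse inclusion in the kernel computation; the essential trick is that the unit obstruction $w$ is automatically totally positive thanks to the ``positive norm'' clause in the definition of $P_L^\infty$, so the simple rescaling $\a \leadsto \a w$ kills the obstruction without disturbing $[I]$ or the positivity conditions.
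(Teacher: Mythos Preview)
Your argument is correct, and it takes a genuinely different route from the paper. The paper works on the ``element side'': it uses the norm map $g_2 \colon M_L \to M_K$ between the auxiliary groups of square-ideal-classes-of-elements, proves that $g_2(\ker \g_L) = \ker \g_K$ and $\ker g_2 = M_2$, and then deduces $M_L/(M_2 \cdot \ker \g_L) \simeq M_K/\ker \g_K$; since $\g_L$ and $\g_K$ are surjective onto $\im(\pi)$ and $\im(\pi_K)$ respectively, this yields the isomorphism. You instead work directly on the ``ideal-class side'' with the ideal norm $N \colon C_L[2] \to C_K[2]$, showing it restricts to a surjection $\im(\pi) \twoheadrightarrow \im(\pi_K)$ with kernel $\im(\g)$. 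Your approach is more direct and avoids the eight-object commutative diagram, at the cost of one extra idea: the rescaling $\a \leadsto \a w$ by a totally positive unit to force $N(\a')$ to be a perfect square. The paper never needs this trick because the corresponding step on the element side (showing $g_2(\ker \g_L) \supset \ker \g_K$) is handled by the trivial section $h \colon M_K \hookrightarrow M_L$. Both arguments ultimately rely on $[L:K]=3$ being odd to produce the section, and both derive the numerical formula from $|\ker \pi_K| = |C_K^+|/|C_K|$ in the same way.
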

\begin{proof}
We first claim that the map $g_2$ induces an isomorphism
\begin{equation*}
\frac{M_L}{M_2 \cdot \ker (\g_L)} \simeq \frac{M_K}{\ker (\g_K)}.
\end{equation*}
By definition, we have $\ker (\g_\star) = \{[\a] \in M_\star : (\a)=(\b)^2 \text{ for some } \b \in P_\star \}$ for $\star \in \{K, L\}$.
Let $h : M_K \to M_L$ be the map sending $[a]$ to $[a]$. Then $g_2 \circ h$ is the identity (because $[L : K]=3$) and the kernel of $g_2$ is $M_2$. Thus, to prove the claim, it suffices to show that $g_2(\ker(\g_L))=\ker(\g_K)$. Indeed, let $[\a] \in \ker(\g_L)$. Then $\a=u \cdot \b^2$ for some $u\in \cO_L^\times$ and $\b \in P_L$. Since $N(u) \in \cO_K^\times$, $N(\b) \in P_K$ and $N(\a)=N(u) \cdot (N(\b))^2$, we have $g_2([\a])=N([\a])=[N(\a)] \in \ker(\g_K)$. Conversely, if $[\b] \in \ker(\g_K)$ then it is easy to see that $g(h([\b]))=[\b]$ and $h([\b]) \in \ker(\g_L)$. This proves the claim.

Next, we prove the proposition. Note that $\im(\pi)=\im(\g_L)$ and similarly, $\im (\pi_K)=\im (\g_K)$. Since the kernel of the composition
\begin{equation*}
\xymatrix{
M_L \ar@{->>}[r]^-{\g_L}  & \im (\g_L)=\im (\pi) \ar@{->>}[r] & \frac{\im (\pi)}{\im (\g)}
}
\end{equation*}
is $M_2 \cdot \ker (\g_L)$ and $\frac{M_K}{\ker (\g_K)} \simeq \im(\g_K)=\im (\pi_K)$, the first assertion follows.
Since $|\ker(\pi_K)| \times |\im (\pi_K)|=|C^+_K[2]|$ and $|\ker (\pi_K)|= \frac{|C^+_K|}{|C_K|}$, we obtain the result.
\end{proof}

\ms
\noindent
\textbf{$\bullet$ Step 2: Computation of the kernel of $\g$}. Recall that $A$ (resp. $B$) is the set of all ramified (resp. unramified) real primes of $K$, and $a=|A|$ (resp. $b=|B|$). Also, $C$ is the set of complex primes of $K$, and $c=|C|$. Note that $[K:\Q]=a+b+2c$ and the number of real (resp. complex) primes of $L$ is $a+3b$ (resp. $a+3c$). Note also that there is the canonical map
\begin{equation*}
\sign : L^\times \to \sel {L_\R}= \prod_{v \in A} \{ \pm 1\} \times \prod_{v \in B} (\{ \pm 1 \} \times \{ \pm 1 \} \times \{ \pm 1 \})
\end{equation*}
which we often regard as the map from $\sel {L}$ (or its subgroups). 
For simplicity, let
\begin{equation*}
\wt{W} = \prod_{v \in A} \{ 1 \} \times \prod_{v \in B} \{(1, 1, 1), (1, -1, -1), (-1, 1, -1), (-1, -1, 1) \} \subset \sel {L_\R}.
\end{equation*}

\ms
First, we prove the following.
\begin{lemma}\label{lemma: isom1}
We have
\begin{equation*}
\ker (\g) = (\sel {\cO_L})_\ns \cap M_2.
\end{equation*}
\end{lemma}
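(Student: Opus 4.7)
The plan is to prove the equality as subsets of $\sel L$ by direct unwinding of both sides. The structural observation driving the proof is that every class in $\ker(\g)$ automatically admits a unit representative: since $\g([\a]) = [I] = 0$ in $C_L[2]$ (where $I \in \cF_L$ satisfies $(\a) = I^2$) means $I$ is principal, writing $I = (\b)$ gives $u := \a/\b^2 \in \cO_L^\times$ with $[u] = [\a]$ in $\sel L$.

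The forward inclusion $(\sel{\cO_L})_\ns \cap M_2 \subseteq \ker(\g)$ is immediate: for $[u]$ in the intersection with $u \in \cO_L^\times$, we have $(u) = (1) = (1)^2$, so by the construction of $\g$ in Step 3 above, $\g([u]) = [(1)] = 0$ in $C_L[2]$.

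For the reverse inclusion, I would take $[\a] \in \ker(\g)$ represented by $\a \in P_L^\infty$, construct $u = \a/\b^2 \in \cO_L^\times$ as above, and verify that $[u]$ meets the defining conditions of $(\sel{\cO_L})_\ns \cap M_2$. The norm-square condition $N(u) \in (K^\times)^2$ follows from $N(\a) = N(u) N(\b)^2$ combined with $N(\a) \in (K^\times)^2$ (which holds because $M_2 \subseteq (\sel L)_\ns$). The positivity condition $u \in P_L^\infty$ follows from $\sign(u) = \sign(\a) \in \wt{V}$, using that $\b^2$ has positive sign at every real place of $L$. The ideal condition $(u) = (1)^2$ for $M_2$ is automatic for the unit $u$.

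The proof is essentially routine, and I do not anticipate any genuine obstacle; the key point is that the change from $\a$ to the unit representative $u$ occurs via a totally positive square $\b^2$, so both the norm-square and positivity conditions descend automatically from $\a$ to $u$ without additional work.
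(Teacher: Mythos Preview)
Your proof is correct and follows essentially the same route as the paper: produce a unit representative $u = \alpha/\beta^2$ for any $[\alpha] \in \ker(\gamma)$, and check that $[u]$ lies in $(\sel{\cO_L})_{\ns} \cap M_2$. The only difference is that you verify $u \in P_L^\infty$ and $(u) = (1)^2$ explicitly, whereas the paper simply notes that $[u] = [\alpha] \in M_2$ (since $\gamma$ is defined on $M_2$ and membership in $M_2$ depends only on the class in $\sel L$, not the representative), so only the norm-square condition needs checking; your extra verification is correct but redundant.
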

\begin{proof}
Let $[\a] \in \ker (\g)$. 
If we write $I^2=(\a)$, then $I$ is principal by definition, so $I=(\b)$ for some $\b \in L^\times$. In other words, $(\a)=(\b^2)$ and hence there is a unit $u \in \cO_L^\times$ such that $\a=\b^2 u$. Note that $[\a]=[u]$ and so it suffices to show that $N(u)$ is a square. Since $[\a] \in M_2$, $N(\a)=c^2$ for some $c \in K^\times$. Hence, $N(u)=N(\a)\times N(\b)^{-2}=(c N(\b)^{-1})^2$ is a square, as desired.

Conversely, if $[\a] \in (\sel {\cO_L})_\ns \cap M_2$, then we have $(\a)=\cO_L=(\cO_L)^2$ (as $\a \in \cO_L^\times$). Thus, $I=\cO_L=(1)$ is principal and $[\a] \in \ker(\g)$. 
\end{proof}
Note that if $N(\a)$ is a square then $\sign(\a) \in \wt{W}$. 
Note also that $\sign(\a) \in \wt{V}$ if and only if $\a \in P_L^\infty$ by definition. Thus, $\sign^{-1}(\wt{V}) \cap (\sel {\cO_L})_\ns \subset M_2$ and so we have the following.
\begin{lemma}\label{lemma: isom2}
The kernel of $\g$ is isomorphic to that of the composition 
\begin{equation*}
\xymatrix{
(\sel {\cO_L})_\ns \ar[r]^-{\sign} &\sign((\sel {\cO_L})_\ns) \ar@{->>}[r] &\frac{\sign((\sel {\cO_L})_\ns)}{\sign((\sel {\cO_L})_\ns) \cap \wt{V}}.
}
\end{equation*}
\end{lemma}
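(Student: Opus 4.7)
My plan is to read off the result from Lemma \ref{lemma: isom1}, which identifies $\ker(\g)$ with $(\sel{\cO_L})_\ns \cap M_2$, and then check that this intersection coincides with the kernel of the displayed composition. First I would note that $\sign : L^\times \to \sel{L_\R}$ is trivial on $(L^\times)^2$ (each coordinate squared is $+1$) and hence descends to a well-defined homomorphism on $\sel L$; in particular the condition ``$\a \in P_L^\infty$'' really is a condition on the class $[\a]$, and $P_L^\infty = \sign^{-1}(\wt{V})$.

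Next I would show that for any $[u] \in (\sel{\cO_L})_\ns$, membership in $M_2$ is equivalent to $\sign(u) \in \wt{V}$. The forward direction is automatic from $M_2 \subset \sign^{-1}(\wt{V})$. For the reverse, if $\sign(u) \in \wt{V}$ then the three defining conditions of $M_2$ all hold for $u$ itself: $N(u) \in (K^\times)^2$ is built into the hypothesis $[u] \in (\sel{\cO_L})_\ns$; the principal ideal $(u) = \cO_L = (1)^2$ satisfies the square-ideal condition trivially; and $u \in P_L^\infty$ by the definition of $\wt{V}$. Combining both directions,
\begin{equation*}
\ker(\g) \;=\; (\sel{\cO_L})_\ns \cap M_2 \;=\; \{[u] \in (\sel{\cO_L})_\ns : \sign(u) \in \wt{V}\}.
\end{equation*}

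Finally, the kernel of the displayed composition is by construction $\{[u] \in (\sel{\cO_L})_\ns : \sign(u) \in \sign((\sel{\cO_L})_\ns) \cap \wt{V}\}$, and since $\sign(u) \in \sign((\sel{\cO_L})_\ns)$ tautologically, this cuts out exactly the same subgroup. Hence the two kernels literally coincide as subgroups of $(\sel{\cO_L})_\ns$, which yields the claimed isomorphism. I do not foresee any real obstacle: once Lemma \ref{lemma: isom1} is in hand, everything reduces to unwinding definitions, the only subtle point being that $\sign$ factors through $\sel L$ so that ``$u \in P_L^\infty$'' is insensitive to multiplication by squares.
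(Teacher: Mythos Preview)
Your proposal is correct and follows essentially the same route as the paper. The paper's proof is just one line invoking Lemma~\ref{lemma: isom1}, but the real content is the sentence immediately preceding the lemma, where the authors note that $\sign^{-1}(\wt{V}) \cap (\sel{\cO_L})_\ns \subset M_2$; your argument simply spells out both this inclusion and its (definitional) converse in full, and also makes explicit that $\sign$ is well-defined on square classes.
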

\begin{proof}
By Lemma \ref{lemma: isom1}, we have $\ker (\g)=(\sel {\cO_L})_\ns \cap M_2$ and hence the result follows.
\end{proof}
By the second isomorphism theorem, we have the following.
\begin{lemma}\label{lemma: isom3}
We have
\begin{equation*}
\frac{\sign((\sel {\cO_L})_\ns)}{\sign((\sel {\cO_L})_\ns) \cap \wt{V}}
\simeq \frac{\sign((\sel {\cO_L})_\ns)\cdot \wt{V}}{\wt{V}}.
\end{equation*}
\end{lemma}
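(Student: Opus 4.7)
The plan is to recognize this as a direct instance of the second isomorphism theorem applied inside the ambient abelian group $\sel{L_\R}$. Everything lives in $\sel{L_\R}= \prod_{v\in A}\{\pm 1\}\times \prod_{v\in B}(\{\pm 1\}^3)$, which is abelian (in fact, an elementary abelian $2$-group), so every subgroup is normal. Setting $H:=\sign((\sel{\cO_L})_\ns)$ and taking $\wt V$ as the second subgroup, I would simply invoke the identity
\[
\frac{H}{H\cap \wt V}\;\cong\;\frac{H\cdot \wt V}{\wt V},
\]
which is the second isomorphism theorem. The isomorphism is realized by the map that sends $h(H\cap \wt V)\mapsto h\wt V$, with well-definedness and injectivity both following at once from $h\in \wt V\iff h\in H\cap \wt V$ when $h\in H$.

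Because there is essentially nothing to do beyond this citation, the main (and only) point to verify is that $\wt V$ and $H$ are genuinely subgroups of the same ambient group $\sel{L_\R}$, so that the product $H\cdot \wt V$ makes sense — this is clear from the way $\wt V$ was defined as a subgroup of $\sel{L_\R}$ and from the fact that $\sign$ takes values in $\sel{L_\R}$. In particular the expression $H\cdot\wt V$ is the usual internal product of subgroups of an abelian group. I therefore do not expect any genuine obstacle; the lemma is stated separately only because the isomorphism is used in the subsequent step that compares $\ker(\gamma)$ to a quotient of $\sign((\sel{\cO_L})_\ns)\cdot \wt V$ by $\wt V$, where the right-hand side is the version that will actually get analyzed (via comparison with $\wt V'$ and a unit-rank computation) in the following steps of the proof of Proposition 2.4.

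In summary, the proposed proof is a one-liner: apply the second isomorphism theorem to the subgroups $\sign((\sel{\cO_L})_\ns)$ and $\wt V$ of the abelian group $\sel{L_\R}$.
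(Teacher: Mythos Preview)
Your proof is correct and is exactly the paper's approach: the paper also proves this lemma in one line by invoking the second isomorphism theorem for the subgroups $\sign((\sel{\cO_L})_\ns)$ and $\wt V$ of the abelian group $\sel{L_\R}$.
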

Finally, we have the following.
\begin{lemma}\label{lemma: isom4}
There is an isomorphism
\begin{equation*}
\sign((\sel {\cO_L})_\ns) \cdot \wt{V} \simeq \sign(\cO_L^\times)\cdot \wt{V} /\sign(\cO_K^\times).
\end{equation*}
\end{lemma}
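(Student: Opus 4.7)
The plan is to construct a surjective group homomorphism
$$\Phi : \sign(\cO_L^\times)\cdot \wt{V} \twoheadrightarrow \sign\bigl((\sel{\cO_L})_\ns\bigr) \cdot \wt{V}$$
with kernel exactly $\sign(\cO_K^\times)$, so that the first isomorphism theorem yields the claimed isomorphism.

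The guiding identity is that for any $u \in \cO_L^\times$ one has $N(u\cdot N(u)) = N(u) \cdot N(u)^3 = N(u)^4 \in (K^\times)^2$, so $u\cdot N(u)$ always represents a class in $(\sel{\cO_L})_\ns$. This suggests the recipe $\Phi(\sign(u)\cdot t) := \sign(u\cdot N(u))\cdot t$ for $u \in \cO_L^\times$ and $t \in \wt V$. To make this manifestly a well-defined homomorphism, I would realize $\Phi$ as the restriction of a global map $h : \sel{L_\R} \to \sel{L_\R}$ given by $h(\bar s) := \bar s \cdot \ov{\iota}(\ov{N}(\bar s))$, where $\ov{N} : \sel{L_\R} \to \sel{K_\R}$ multiplies the sign-components lying above each real place of $K$, and $\ov{\iota} : \sel{K_\R} \hookrightarrow \sel{L_\R}$ is the diagonal inclusion. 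A direct coordinate computation will show that $h$ is an idempotent homomorphism with image $\wt W$ and kernel $\ov{\iota}(\sel{K_\R})$, and is the identity on $\wt V$ because $\wt V \subset \wt W$; moreover $h(\sign(u)) = \sign(u) \cdot \sign(N(u)) = \sign(u \cdot N(u))$, matching the recipe on $\sign(\cO_L^\times)$.

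Granting these coordinate facts about $h$, surjectivity of $\Phi$ is immediate: $h(\sign(\cO_L^\times)) \subset \sign((\sel{\cO_L})_\ns)$ by the motivating identity, while conversely for $u$ already representing a class in $(\sel{\cO_L})_\ns$ we have $\sign(N(u)) = 1$ and hence $h(\sign(u)) = \sign(u)$, giving equality; combined with $h|_{\wt V} = \mathrm{id}$, this yields the desired image. For the kernel, if $\bar s = \sign(u)\cdot t \in \ker \Phi$ then $t = \sign(u\cdot N(u))$ (since $\wt V$ has exponent two), and back-substitution gives $\bar s = \sign(u^2 \cdot N(u)) = \sign(N(u))$, which lies in $\sign(\cO_K^\times)$ since $N(u) \in \cO_K^\times$. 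Conversely, for $a \in \cO_K^\times$ one has $N(a) = a^3$, so $h(\sign(a)) = \sign(a \cdot a^3) = \sign(a^4) = 1$.

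I expect the main obstacle to be the routine but somewhat intricate coordinate case analysis establishing the three properties of $h$: idempotence with image $\wt W$, kernel $\ov{\iota}(\sel{K_\R})$, and identity action on $\wt V$. This requires carefully unwinding the formulas at ramified real primes of $K$ (where the three slots at $\wt v, \wt v_2, \wt v_3$ collapse to one) versus unramified real primes (where they do not), leaning on the explicit descriptions of $\wt V$ and $\wt W$ given in Section~\ref{subsection : semi-narrow class group} and preceding Lemma~\ref{lemma: isom1}. Once this local lemma is secured, the remainder of the argument is purely formal.
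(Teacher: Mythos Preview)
Your proposal is correct and rests on the same core trick as the paper: twisting a unit $u \in \cO_L^\times$ by its norm, so that $u\cdot N(u)$ lands in $(\sel{\cO_L})_\ns$ because $N(u\cdot N(u))=N(u)^4$. The only difference is packaging. The paper goes in the opposite direction, defining the obvious inclusion
\[
f:\sign\bigl((\sel{\cO_L})_\ns\bigr)\longrightarrow \sign(\cO_L^\times)/\sign(\cO_K^\times),\qquad \sign([\a])\mapsto \sign(\a)\cdot\sign(\cO_K^\times),
\]
proves surjectivity via the same norm-twist, and proves injectivity by the one-line observation that $\sign(\cO_K^\times)$ lies in the diagonal while $\sign((\sel{\cO_L})_\ns)\subset\wt W$, and $\wt W\cap\sign(\cO_K^\times)$ is trivial; finally it multiplies both sides by $\wt V$, using that $\wt V\cap\sign(\cO_K^\times)$ is also trivial. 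Your global idempotent $h$ on $\sel{L_\R}$ is a clean conceptual device (and indeed has image $\wt W$, kernel the diagonal, and fixes $\wt V$, exactly as you say), but it amounts to the inverse of the paper's $f$, so the two arguments are really the same proof read in opposite directions. The paper's version avoids the coordinate case analysis for $h$ you flag as the ``main obstacle'', trading it for the single observation $\wt W\cap\sign(\cO_K^\times)=\{1\}$.
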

\begin{proof}
Let $f$ be the map from $\sign((\sel {\cO_L})_\ns)$  to  $\sign(\cO_L^\times)/\sign(\cO_K^\times)$ defined by $f(\sign([\a]))=\sign(\a)\cdot \sign(\cO_K^\times)$ for any $[\a] \in  (\sel {\cO_L})_\ns$. We claim that this map is an isomorphism. Let $\a \in \cO_L^\times$. Since 
$\sign(\a N(\a))=\sign(\a) \cdot \sign(N(\a))$ and $N(\a) \in \cO_K^\times$, 
we have
\begin{equation*}
\sign(\a)\cdot \sign(\cO_K^\times) = \sign(\a N(\a)) \cdot \sign(\cO_K^\times) = f(\sign([\a N(\a)])).
\end{equation*}
Since $N(\a N(\a))=N(\a)^4$, we have $[\a N(\a)] \in (\sel {\cO_L})_\ns$ and hence $f$ is surjective. Next, since
\begin{equation*}
\sign(\cO_K^\times) \subset \prod_{v\in A} \{ \pm 1 \} \times \prod_{v\in B} \{(1, 1, 1), (-1, -1, -1) \},
\end{equation*}
the intersection of $\wt{W}$ and $\sign(\cO_K^\times)$ is trivial. Since $\sign(\a) \in \wt{W}$ for any $\a \in (\sel L)_\ns$, $f$ is injective as claimed. 
By multiplying on both sides by $\wt{V}$, we get the desired isomorphism because $\sign(\cO_K^\times) \cap \wt{V}$ is also trivial.
\end{proof}

Combining all the results above, we have the following.
\begin{proposition}\label{proposition: the size of ker gamma}
We have
\begin{equation*}
|\ker (\g)|=\frac{|(\sel {\cO_L})_\ns| \times |\wt{V}| \times |\sign(\cO_K^\times)|}{|\sign(\cO_L^\times)\cdot \wt{V}|}.
\end{equation*}
\end{proposition}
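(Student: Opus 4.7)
The plan is to chain together Lemmas \ref{lemma: isom2}, \ref{lemma: isom3}, and \ref{lemma: isom4}, which have been established immediately above, and extract the cardinality of $\ker(\g)$ by two elementary index computations.

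First, I would apply Lemma \ref{lemma: isom2} to replace $\ker(\g)$ with the kernel of the composition
\begin{equation*}
(\sel{\cO_L})_\ns \xrightarrow{\sign} \sign((\sel{\cO_L})_\ns) \twoheadrightarrow \frac{\sign((\sel{\cO_L})_\ns)}{\sign((\sel{\cO_L})_\ns) \cap \wt{V}}.
\end{equation*}
Since the first arrow is surjective onto its image and the second is a quotient by the subgroup $\sign((\sel{\cO_L})_\ns) \cap \wt{V}$, the preimage of the identity in the quotient has size $|\sign((\sel{\cO_L})_\ns) \cap \wt{V}|$, while each fiber of the first map has size $|(\sel{\cO_L})_\ns|/|\sign((\sel{\cO_L})_\ns)|$. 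Multiplying gives
\begin{equation*}
|\ker(\g)| = \frac{|(\sel{\cO_L})_\ns| \cdot |\sign((\sel{\cO_L})_\ns) \cap \wt{V}|}{|\sign((\sel{\cO_L})_\ns)|}.
\end{equation*}

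Next, I would use Lemma \ref{lemma: isom3} to rewrite the intersection factor. The isomorphism there together with Lagrange's theorem yields
\begin{equation*}
|\sign((\sel{\cO_L})_\ns) \cap \wt{V}| = \frac{|\sign((\sel{\cO_L})_\ns)| \cdot |\wt{V}|}{|\sign((\sel{\cO_L})_\ns) \cdot \wt{V}|}.
\end{equation*}
Substituting this into the previous display, the factor $|\sign((\sel{\cO_L})_\ns)|$ cancels, leaving
\begin{equation*}
|\ker(\g)| = \frac{|(\sel{\cO_L})_\ns| \cdot |\wt{V}|}{|\sign((\sel{\cO_L})_\ns) \cdot \wt{V}|}.
\end{equation*}

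Finally, I would apply Lemma \ref{lemma: isom4}, which identifies $\sign((\sel{\cO_L})_\ns) \cdot \wt{V}$ with the quotient $\sign(\cO_L^\times) \cdot \wt{V}/\sign(\cO_K^\times)$. This gives
\begin{equation*}
|\sign((\sel{\cO_L})_\ns) \cdot \wt{V}| = \frac{|\sign(\cO_L^\times) \cdot \wt{V}|}{|\sign(\cO_K^\times)|},
\end{equation*}
and substituting produces exactly the claimed formula. The proof is a pure bookkeeping argument — no step is really an obstacle since all the substantive work (verifying that the three isomorphisms hold, in particular that $\sign(\cO_K^\times) \cap \wt{V}$ is trivial so that the quotient in Lemma \ref{lemma: isom4} makes sense) has already been carried out; one only needs to be careful to apply Lagrange in the right order so that the factor $|\sign((\sel{\cO_L})_\ns)|$ cancels cleanly.
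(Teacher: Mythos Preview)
Your proof is correct and follows exactly the approach the paper intends: the paper states this proposition as an immediate consequence of the preceding lemmas (``Combining all the results above''), and you have simply spelled out the chain of index computations linking Lemmas~\ref{lemma: isom2}, \ref{lemma: isom3}, and \ref{lemma: isom4}.
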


\ms
\noindent
\textbf{$\bullet$ Step 3: Computation of the kernel of $\pi$}. As shown in \eqref{equation : ker pi}, we have $\ker (\pi) \simeq \frac{\cP_L}{\cP_L^\infty}$. Using the sign map, we obtain the following.
\begin{lemma}\label{lemma: isom5}
We have
\begin{equation*}
 \frac{\cP_L}{\cP_L^\infty} \simeq \frac{L^\times}{\cO_L^\times \cdot \sign^{-1}(\wt{V})}.
\end{equation*}
\end{lemma}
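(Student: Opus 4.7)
My plan is to realize both $\cP_L$ and $\cP_L^\infty$ as quotients of $L^\times$ by $\cO_L^\times$ (or a larger subgroup) and then apply the third isomorphism theorem.

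First, I would consider the principal divisor map
\begin{equation*}
\varphi : L^\times \to \cP_L, \qquad \alpha \mapsto (\alpha),
\end{equation*}
which is surjective by the very definition $\cP_L = \{(\alpha) : \alpha \in L^\times\}$ and has kernel $\cO_L^\times$. Thus $\varphi$ induces an isomorphism $L^\times/\cO_L^\times \xrightarrow{\sim} \cP_L$.

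Next, I would identify the subgroup of $L^\times/\cO_L^\times$ corresponding to $\cP_L^\infty$ under this isomorphism. By definition $(\alpha) \in \cP_L^\infty$ iff $(\alpha) = (\beta)$ for some $\beta \in P_L^\infty = \sign^{-1}(\wt{V})$, which is equivalent to saying that $\alpha \in u \cdot \sign^{-1}(\wt{V})$ for some unit $u \in \cO_L^\times$, i.e., $\alpha \in \cO_L^\times \cdot \sign^{-1}(\wt{V})$. Hence the preimage $\varphi^{-1}(\cP_L^\infty)$ is exactly the subgroup $\cO_L^\times \cdot \sign^{-1}(\wt{V}) \subset L^\times$.

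Finally, the third isomorphism theorem applied to the chain $\cO_L^\times \subset \cO_L^\times \cdot \sign^{-1}(\wt{V}) \subset L^\times$ yields
\begin{equation*}
\frac{\cP_L}{\cP_L^\infty} \simeq \frac{L^\times/\cO_L^\times}{\left(\cO_L^\times \cdot \sign^{-1}(\wt{V})\right)/\cO_L^\times} \simeq \frac{L^\times}{\cO_L^\times \cdot \sign^{-1}(\wt{V})},
\end{equation*}
which is exactly the claim. There is no real obstacle here; the only thing to be careful about is the equivalence $(\alpha) \in \cP_L^\infty \iff \alpha \in \cO_L^\times \cdot P_L^\infty$, which is immediate from the definition of $\cP_L^\infty$ as the set of principal ideals \emph{generated by} an element of $P_L^\infty$ (so two generators differ by a unit).
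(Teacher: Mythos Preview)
Your argument is correct and essentially identical to the paper's: the paper composes the isomorphism $L^\times/\cO_L^\times \xrightarrow{\sim} \cP_L$ with the projection to $\cP_L/\cP_L^\infty$, observes the composite is surjective, and checks its kernel is $\cO_L^\times \cdot \sign^{-1}(\wt V)/\cO_L^\times$. You have done the same computation, only phrased via the third isomorphism theorem rather than the first.
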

\begin{proof}
Let $f$ be the composition
\begin{equation*}
\xyh{4}
\xymatrix{
\frac{L^\times \ar[r]}{\cO_L^\times}  \ar[r]_-{\a \mapsto (\a)}^-\sim & \cP_L \ar[r] & \frac{\cP_L}{\cP_L^\infty},
}
\end{equation*}
which is clearly surjective. It is straightforward to check that $\ker (f)=\sign^{-1}(\wt{V}) \cdot \cO_L^\times/{\cO_L^\times}$, which completes the proof.
\end{proof}
Again, by the sign map we have the following.
\begin{lemma}\label{lemma: isom6}
The sign map induces an isomorphism
\begin{equation*}
 \frac{L^\times}{\cO_L^\times \cdot \sign^{-1}(\wt{V})} \overset{\sim}{\longrightarrow}
\frac{\sign(L^\times)}{\sign(\cO_L^\times) \cdot \wt{V}}.
\end{equation*}
\end{lemma}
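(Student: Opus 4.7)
The plan is to invoke the first isomorphism theorem applied to the composition
$$\varphi: L^\times \xrightarrow{\sign} \sign(L^\times) \twoheadrightarrow \frac{\sign(L^\times)}{\sign(\cO_L^\times)\cdot \wt{V}}.$$
This map is surjective by construction since the sign map onto its image is surjective. The only content is to identify $\ker(\varphi)$ with the subgroup $\cO_L^\times \cdot \sign^{-1}(\wt{V})$ appearing in the statement.

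For the kernel computation, I would argue as follows. An element $\a \in L^\times$ lies in $\ker(\varphi)$ if and only if $\sign(\a) = \sign(u)\cdot w$ for some $u \in \cO_L^\times$ and some $w \in \wt{V}$. Rearranging, this is equivalent to $\sign(u^{-1}\a) = w \in \wt{V}$, i.e., $u^{-1}\a \in \sign^{-1}(\wt{V})$, so that $\a \in \cO_L^\times \cdot \sign^{-1}(\wt{V})$. The converse containment is immediate because $\sign$ is a group homomorphism: if $\a = u\b$ with $u \in \cO_L^\times$ and $\b \in \sign^{-1}(\wt{V})$, then $\sign(\a) = \sign(u)\sign(\b) \in \sign(\cO_L^\times)\cdot \wt{V}$. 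Thus $\ker(\varphi) = \cO_L^\times \cdot \sign^{-1}(\wt{V})$, and the first isomorphism theorem yields the desired isomorphism.

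There is essentially no obstacle here; the statement is a direct consequence of the definitions together with the first isomorphism theorem. The only thing worth flagging is the bookkeeping verification that the map indeed descends, which is automatic since $\sign(\cO_L^\times \cdot \sign^{-1}(\wt V)) = \sign(\cO_L^\times)\cdot \wt{V}$ by the homomorphism property of $\sign$ and the fact that $\sign(\sign^{-1}(\wt V)) = \wt V \cap \sign(L^\times)$, which is absorbed by $\wt V$ on the target side.
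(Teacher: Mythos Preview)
Your proof is correct and follows essentially the same route as the paper: both apply the first isomorphism theorem to the surjection $L^\times \to \sign(L^\times)/(\sign(\cO_L^\times)\cdot \wt V)$ and reduce to the kernel identification $\ker(\varphi)=\cO_L^\times\cdot\sign^{-1}(\wt V)$, which is verified by exactly the same rearrangement $\sign(u^{-1}\a)\in\wt V$. The paper only spells out the nontrivial containment, but your more explicit bookkeeping is equivalent.
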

\begin{proof}
It suffices to show that if $\sgn(\a) \in \sign(\cO_L^\times)\cdot \wt{V}$ for some $\a \in L^\times$, then $\a \in \cO_L^\times \cdot \sign^{-1}(\wt{V})$.
By the assumption,  there is $\b \in \cO_L^\times$ such that $\sgn(\a) \in \sgn(\b)\cdot \wt{V}$, or equivalently, $\sign(\a/\b) \in \wt{V}$. Thus, $\a/\b \in \sign^{-1}(\wt{V})$ and hence $\a \in \b \cdot \sign^{-1}(\wt{V}) \subset \cO_L^\times \cdot \sign^{-1}(\wt{V})$, as desired.
\end{proof}

Combining two results above, we have the following.
\begin{proposition}\label{proposition: the size of ker pi}
We have
\begin{equation*}
|\ker (\pi)|=\frac{|\sign(L^\times)|}{|\sign(\cO_L^\times)\cdot \wt{V}|}.
\end{equation*}
\end{proposition}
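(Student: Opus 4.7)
The plan is to assemble the formula from the three ingredients that have just been established, with essentially no new input. First, recall from the exact sequence \eqref{equation : ker pi} that $\ker(\pi)$ sits inside $C_L^\infty[2]$ as exactly the image of $\cP_L/\cP_L^\infty$, and since this kernel is elementary abelian $2$-group injecting into $C_L^\infty[2]$, we get $\ker(\pi) \simeq \cP_L/\cP_L^\infty$. So the task reduces to computing $|\cP_L/\cP_L^\infty|$.

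Next, I would apply Lemma \ref{lemma: isom5}, which converts the quotient of principal fractional ideal groups into the quotient of $L^\times$ modulo $\cO_L^\times \cdot \sign^{-1}(\wt V)$; this is the standard trick of writing a principal ideal group as $L^\times/\cO_L^\times$, and then identifying the image of $\cP_L^\infty$ as the product $\cO_L^\times \cdot \sign^{-1}(\wt V)/\cO_L^\times$. Then I would apply Lemma \ref{lemma: isom6}, which pushes the whole quotient through the sign map and identifies
\begin{equation*}
\frac{L^\times}{\cO_L^\times \cdot \sign^{-1}(\wt V)} \;\simeq\; \frac{\sign(L^\times)}{\sign(\cO_L^\times)\cdot \wt V}.
\end{equation*}

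Chaining these three isomorphisms together gives
\begin{equation*}
\ker(\pi) \;\simeq\; \cP_L/\cP_L^\infty \;\simeq\; L^\times/(\cO_L^\times \cdot \sign^{-1}(\wt V)) \;\simeq\; \sign(L^\times)/(\sign(\cO_L^\times)\cdot \wt V),
\end{equation*}
and taking cardinalities yields the stated identity. There is no genuine obstacle: all the substantive work (showing that $\sign$ respects these subgroup relations, and that lifting back from $\sign(L^\times)$ stays within $\cO_L^\times \cdot \sign^{-1}(\wt V)$) has already been handled inside the two preceding lemmas, so this proposition is really just the packaging step.
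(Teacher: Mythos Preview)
Your proposal is correct and matches the paper's own proof essentially verbatim: the paper simply says ``Combining two results above'' and reads off the formula from the chain $\ker(\pi) \simeq \cP_L/\cP_L^\infty \simeq L^\times/(\cO_L^\times \cdot \sign^{-1}(\wt V)) \simeq \sign(L^\times)/(\sign(\cO_L^\times)\cdot \wt V)$ coming from \eqref{equation : ker pi}, Lemma~\ref{lemma: isom5}, and Lemma~\ref{lemma: isom6}. There is nothing to add or correct.
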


\ms
\noindent
\textbf{$\bullet$ Step 4: Proof of Proposition \ref{proposition: index of M2 M0}}.
Since 
\begin{equation*}
|M_2|=|\ker (\g)|\times |\im(\g)| \qa |C^\infty_L[2]|=|\ker (\pi)| \times |\im(\pi)|,
\end{equation*}
Propositions \ref{proposition: poor inequality}, \ref{proposition: the size of ker gamma} and \ref{proposition: the size of ker pi} we have
\begin{equation*}
\frac{|M_2|}{|C_L^\infty[2]|}=\frac{|(\sel {\cO_L})_\ns| \times |\wt{V}| \times |\sign(\cO_K^\times)|\times |C^+_K|}{|\sign(L^\times)|\times |C^+_K[2]|\times |C_K|}.
\end{equation*}
By the lemma below, we obtain the result. \qed
\begin{lemma}\label{lemma: basic quantities}
We have the following.
\begin{enumerate}
\item
$[K:\Q]=a+b+2c$ and $|\wt{V}|=2^b$.
\item
$|\sign(K^\times)|=2^{a+b}$ and $|\sign(L^\times)|=2^{a+3b}$.
\item
$|\sign(\cO_K^\times)|=2^{a+b} \times |C_K| \times |C^+_K|^{-1}$.
\item
$|\sel {\cO_K}|=2^{a+b+c}$ and $|\sel {\cO_L}|=2^{2a+3b+3c}$.
\item
$|(\sel {\cO_L})_\ns|=2^{a+2b+2c}$.
\end{enumerate}
\begin{proof}
The first assertion is obvious. Note that the sign map is surjective by the weak approximation theorem. Thus, the second one follows.
Next, consider the exact sequence (cf. Example 1.8 (b) of Chapter V in \cite{Mi13})
\begin{equation*}
\xymatrix{
0 \ar[r] & \cO_K^\times/{(\cO_K^\times)_+} \ar[r] & {P_K}/{P_K^+}  \ar[r] & C^+_K \ar[r] & C_K \ar[r] & 0,
}
\end{equation*}
where $(\cO_K^\times)_+=\cO_K^\times \cap P_K^+$. Since the sign map induces an isomorphism $P_K/{P_K^+} \simeq \sign (K^\times)$ and $\cO_K^\times/{(\cO_K^\times)_+} \simeq \sign (\cO_K^\times)$, the third one follows.
Then, by Dirichlet's unit theorem for any number field $H$ we have 
$|\sel {\cO_H}|=2\times 2^{r_1+r_2-1}=2^{r_1+r_2}$, where $r_1$ (resp. $r_2$) denotes the number of real primes (resp. complex) primes. Thus, the fourth one follows.
Lastly, note that the norm map $N: \sel {\cO_L} \to \sel {\cO_K}$ is surjective because $[L:K]=3$. Since $(\sel {\cO_L})_\ns$ is the kernel of the norm map, the last one follows by the fourth assertion.
\end{proof}
\end{lemma}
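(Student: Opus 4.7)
The plan is to compute everything once we understand the signature of $L$. The first thing I would do is determine $r_1(L)$ and $r_2(L)$ by decomposing $L_v = K_v[x]/(F(x))$ at each archimedean place $v$ of $K$: for $v\in A$, $L_v\cong\R\times\C$ gives one real and one complex place of $L$; for $v\in B$, $L_v\cong\R^3$ gives three real places; for $v\in C$, $L_v\cong\C^3$ gives three complex places. This yields $r_1(L)=a+3b$ and $r_2(L)=a+3c$, and confirms $[K:\Q]=a+b+2c$ from the usual count. The second half of (1) is then immediate from the definition $\wt{V}=\prod_{v\in A}\{1\}\times\prod_{v\in B}\{(1,1,1),(1,-1,-1)\}$, which has $|\wt{V}|=2^b$.

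For (2) I would invoke the weak approximation theorem to get surjectivity of the sign maps $K^\times\to\prod_{v:\text{real of }K}\{\pm1\}$ and $L^\times\to\prod_{w:\text{real of }L}\{\pm1\}$, so their images have size $2^{r_1(K)}=2^{a+b}$ and $2^{r_1(L)}=2^{a+3b}$ respectively. For (3) I would use the standard four-term exact sequence
\begin{equation*}
0\to\cO_K^\times/(\cO_K^\times)_+\to P_K/P_K^+\to C_K^+\to C_K\to 0
\end{equation*}
(Milne's ANT, Ch.~V, Ex.~1.8(b)), identifying the first two terms with $\sign(\cO_K^\times)$ and $\sign(K^\times)$ via the sign map. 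Multiplicativity of orders in an exact sequence gives $|\sign(\cO_K^\times)|\cdot|C_K^+|=|\sign(K^\times)|\cdot|C_K|=2^{a+b}|C_K|$, which rearranges to the stated formula.

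For (4), Dirichlet's unit theorem writes $\cO_H^\times$ as a product of a finite cyclic group of even order (it contains $-1$) and a free abelian group of rank $r_1(H)+r_2(H)-1$. The mod-squares quotient therefore has order $2\cdot 2^{r_1(H)+r_2(H)-1}=2^{r_1(H)+r_2(H)}$. Substituting $(r_1,r_2)$ for $K$ and for $L$ gives $|\sel{\cO_K}|=2^{a+b+c}$ and $|\sel{\cO_L}|=2^{2a+3b+3c}$. Finally, for (5), since $[L:K]=3$ is odd the norm $N_{L/K}$ sends $u\in\cO_K^\times\subset\cO_L^\times$ to $u^3$, which is the identity modulo squares; hence $N:\sel{\cO_L}\to\sel{\cO_K}$ is already surjective on the image of $\sel{\cO_K}\hookrightarrow\sel{\cO_L}$, and in particular surjective. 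Since $(\sel{\cO_L})_\ns$ is by definition the kernel of this norm map, $|(\sel{\cO_L})_\ns|=|\sel{\cO_L}|/|\sel{\cO_K}|=2^{a+2b+2c}$.

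There is no real obstacle: everything reduces to the signature computation in the first step plus Dirichlet's unit theorem plus the narrow-class-group exact sequence. The only point requiring mild care is verifying that $N_{L/K}:\sel{\cO_L}\to\sel{\cO_K}$ is surjective when $L/K$ has odd degree (so that (5) is a mere consequence of (4)), but this follows at once from $N(u)=u^{[L:K]}=u^3\equiv u\pmod{(\cO_K^\times)^2}$ for $u\in\cO_K^\times$.
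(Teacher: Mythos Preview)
Your proof is correct and follows essentially the same approach as the paper: weak approximation for (2), the standard narrow-class-group exact sequence from Milne for (3), Dirichlet's unit theorem for (4), and surjectivity of the norm map (via odd degree) for (5). Your version is slightly more explicit in a couple of places---you spell out the signature count $r_1(L)=a+3b$, $r_2(L)=a+3c$ and the reason $N(u)=u^3\equiv u$ forces surjectivity---but these are exactly the computations the paper leaves implicit.
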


\ms
\section{The local conditions}\label{section: local conditions}
As before, let $K$ be a number field and let $F(x)$ be an irreducible cubic polynomial in $\cO_K[x]$.
Also, let $L=K[x]/{(F(x))}$ be a cubic extension of $K$. 

\subsection{Infinite primes}\label{section: 3.1}
Let $v$ be an infinite prime of $K$. Following the notation in Definition \ref{definition : real prime v1 v2 v3}, we define $M_{i, v} \subset \sel {L_v}$ as follows: Let
\begin{equation*}
M_{1, v}=M_{2, v}:=\begin{cases}
\{([1], [1]) \}  & \text{ if $v$ is real and ramified},\\
\{([1], [1], [1]), ([1], [-1], [-1])\} & \text{ if $v$ is real and unramified},\\
\{([1], [1], [1]) \} & \text{ if $v$ is complex}.
\end{cases}
\end{equation*}
By \cite[Prop. 3.7]{BK77}, these coincide with the local condition $\im(\d_{K_v})$ of $\Sel_2(E/K)$ at $v$. 

\ms
\subsection{Finite primes}\label{subsection: finite primes}
Before proceeding, we fix notations.

Let $v$ be a finite prime of $K$, $\cO_{K_v}$ the ring of integers of $K_v$, $\pi$ a uniformizer and $k=\cO_{K_v}/{(\pi)}$ the residue field of $K_v$.
Also, let $\{w_1, \dots, w_n \}$ ($1\leq n \leq 3$) be the primes of $L$ above $v$, $\cO_{L_v}$ the integral closure of $\cO_{K_v}$ in $L_v$. 
For any element $\a \in L$, let $\a_v$ (resp. $\a_w$) be the image of $\a$ by the embedding $\iota_v : L \inj L_v$ (resp. $\iota_w : L \inj L_w$).
From now on, we fix an isomorphism $\phi_v : L_v \simeq L_{w_1} \times \cdots \times L_{w_n}$ which gives rise to a commutative diagram
\begin{equation*}
\xymatrix{
L \ar[d]_-{\iota_v} \ar@/^1pc/[dr]^-{\prod_{i=1}^n \iota_{w_i}} & \\
L_v \ar[r]_-{\phi_v} & L_{w_1} \times \cdots \times L_{w_n}.
}
\end{equation*}
Under the map $\phi_v$ we have natural isomorphisms
\begin{equation*}
\sel {L_v} \simeq \sel {L_{w_1}} \times \cdots \times \sel {L_{w_n}}
\end{equation*}
and
\begin{equation*}
\sel {\cO_{L_v}} \simeq \sel {\cO_{L_{w_1}}} \times \cdots \times \sel {\cO_{L_{w_n}}}.
\end{equation*}

\ms
First, let $\a \in L^\times$. If $w$ is odd, then it is easy to see that
\begin{equation*}
L_w(\sqrt{\a_w})/{L_w} \text{ is unramified} \iff w(\a_w) \in 2\Z \iff \a_w \in \cO_{L_w}^\times \text{ modulo squares}.
\end{equation*}
Also, if $w$ is even then by Lemma \ref{lemma: unramified condition at 2}
\begin{equation*}
L_w(\sqrt{\a_w})/{L_w} \text{ is unramified}  \iff \a_w \in 1+4\cO_{L_w} \text{ modulo squares}.
\end{equation*}
These conditions are equivalent to the assertion $[\alpha_w] \in M_{0,w}$ where
\begin{equation*}
M_{0,w} :=
\begin{cases}
\sel {\cO_{L_w}} & \text{ if $w$ is odd},\\
\{[1], ~[\boxtimes'] \} & \text{ if $w$ is even}.
\end{cases} 
\end{equation*}
Here $\boxtimes' \in 1+4\cO_{L_w}$ is chosen so that $L_w(\sqrt{\boxtimes'})$ is a unique unramified quadratic extension of $L_w$. Similarly, for a finite prime $v$ of $K$ below $w$, there is an element $\boxtimes \in 1+4\cO_{K_v}$ such that $K_v(\sqrt{\boxtimes})/K_v$ is the unramified quadratic extension, which is unique modulo squares.
The following is useful in the sequel.
\begin{lemma}\label{lemma : norm relation at 2}
Let $v$ be an even prime of $K$, and $w$ a prime of $L$ above $v$. Also, let
\begin{equation*}
\text{Nm} : \sel {\cO_{L_w}} \to \sel {\cO_{K_v}}
\end{equation*}
be the map induced by the norm map $N:L_w^\times \to K_v^\times$. If the ramification degree of $L_w/{K_v}$ is odd then we have $\text{Nm}([\boxtimes'])=[\boxtimes]$. If $L_w$ is a ramified quadratic extension of $K_v$, then $\text{Nm}([\boxtimes '])=[1]$. 
\end{lemma}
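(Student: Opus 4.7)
My plan is to use local class field theory in residue characteristic $2$. For any finite extension $F$ of $\Q_2$, write $(\cdot,\cdot)_F\colon \sel{F}\times \sel{F}\to\mu_2$ for the Hilbert symbol, which is non-degenerate. The two ingredients I will use are the projection formula
\[
(N_{L_w/K_v}(\alpha),\beta)_{K_v}\;=\;(\alpha,\beta)_{L_w} \qquad \text{for } \alpha\in L_w^\times,\ \beta\in K_v^\times\hookrightarrow L_w^\times,
\]
together with the following description of $\boxtimes_F$ (where I write $\boxtimes_F$ for $\boxtimes$ or $\boxtimes'$ depending on whether $F=K_v$ or $L_w$): because $F(\sqrt{\boxtimes_F})/F$ is the unramified quadratic extension, its local norm group is $\cO_F^\times\cdot\pi_F^{2\Z}$, so $(\boxtimes_F,\gamma)_F = 0$ exactly when the normalized valuation of $\gamma$ in $F$ is even. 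Equivalently, $(\boxtimes_F,\gamma)_F$ equals that valuation modulo~$2$.

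Then for any $\beta\in K_v^\times$, the projection formula gives
\[
(\mathrm{Nm}([\boxtimes']),[\beta])_{K_v}\;=\;([\boxtimes'],[\beta])_{L_w}\;=\;w(\beta)\bmod 2\;=\;e\cdot v(\beta)\bmod 2,
\]
where $v$ and $w$ are the normalized valuations of $K_v$ and $L_w$ and $e$ is the ramification index of $L_w/K_v$. If $e$ is odd, the right-hand side equals $v(\beta) \bmod 2 = ([\boxtimes],[\beta])_{K_v}$; since this holds for every $\beta\in K_v^\times$ and the Hilbert pairing on $\sel{K_v}$ is non-degenerate, $\mathrm{Nm}([\boxtimes']) = [\boxtimes]$. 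If $L_w/K_v$ is ramified quadratic, then $e=2$, so the right-hand side is $0 = ([1],[\beta])_{K_v}$ identically, and $\mathrm{Nm}([\boxtimes']) = [1]$.

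This uniform argument covers all five possible $(e,f)$ behaviours at once, which is the main convenience. The subtle case is $(e,f)=(1,2)$, where $L_w/K_v$ is unramified quadratic: there $\boxtimes\in K_v^\times$ is already a square in $L_w$ (since $k_w$ already contains the unique quadratic extension of $k_v$), so the naive strategy ``write $[\boxtimes']=[\boxtimes]$ in $\sel{L_w}$ and use $N(\boxtimes) = \boxtimes^{[L_w:K_v]}$'' breaks down, and this is precisely where the Hilbert-symbol machinery earns its keep. Readers preferring a hands-on approach can alternatively dispatch that subcase by writing $\boxtimes' = 1+4s$ with $\mathrm{Tr}_{k_w/\F_2}(\bar s)=1$, expanding $N(\boxtimes') = 1 + 4\,\mathrm{Tr}_{L_w/K_v}(s) + 16\,N_{L_w/K_v}(s)$, and invoking transitivity of the trace modulo Artin--Schreier equivalence to pin down its square class in $K_v$.
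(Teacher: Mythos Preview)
Your proof is correct. The Hilbert-symbol argument is clean: the projection formula $(N_{L_w/K_v}(\alpha),\beta)_{K_v}=(\alpha,\beta)_{L_w}$ together with the identification $(\boxtimes_F,\gamma)_F\equiv \mathrm{val}_F(\gamma)\bmod 2$ immediately reduces the question to the parity of the ramification index, and non-degeneracy of the pairing on $\sel{K_v}$ then pins down $\mathrm{Nm}([\boxtimes'])$ uniquely. (One small remark: non-degeneracy gives equality in $\sel{K_v}$, not a priori in $\sel{\cO_{K_v}}$; but since both $N(\boxtimes')$ and $\boxtimes$ are units and $\sel{\cO_{K_v}}\hookrightarrow\sel{K_v}$, this is harmless.)

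The paper takes a different, shorter route: it simply observes that $N(\boxtimes')\in 1+4\cO_{K_v}$ and invokes \cite[Lem.~1.10]{BPT}, which asserts directly that for $\alpha\in 1+4\cO_{L_w}$ a non-square, $N(\alpha)$ is a square in $K_v$ precisely when the ramification index is even. Since the only non-trivial class in $\sel{K_v}$ represented by an element of $1+4\cO_{K_v}$ is $[\boxtimes]$, the conclusion follows. So the paper outsources the key computation to an external reference, whereas your argument is fully self-contained and works uniformly via local class field theory. Your approach has the advantage of transparency and of handling all the $(e,f)$ cases in one stroke; the paper's has brevity. The alternative trace/Artin--Schreier sketch you offer for the unramified quadratic subcase is correct in spirit and essentially what one would do to prove the cited lemma by hand, though it is not needed once the Hilbert-symbol argument is in place.
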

\begin{proof}
Note that $\boxtimes ' \in 1+4\cO_{L_w}$ is not a square. 
By \cite[Lem. 1.10]{BPT}, $N(\boxtimes ' ) \in 1+4\cO_{K_v}$ is a square (resp. not a square) if the ramification index of $L_w/K_v$ is even (resp. odd). Thus, the result follows.
\end{proof}

\ms
Now we study the local condition $M_{i,v}$ of $M_i$ defined in Section \ref{section: introduction}.
If $v$ is odd then
\begin{equation*}
M_{1, v}=M_{2, v}=(\sel{\cO_{L_v}})_\ns.
\end{equation*}
Thus, we henceforth assume that $v$ is an \textbf{even} prime of $K$. 
It follows form the definition of $M_{1,v}$ that $|M_{1,v}| = |E(K_v)[2]|$. 
So we divide into three cases.

\noindent
\textbf{Case 1}. $|E(K_v)[2]|=1$. Then there is a unique prime $w$ of $L$ and $\phi_v : L_v \simeq L_w$, and we have 
\begin{equation*}
\begin{split}
M_{1, v}&=\{[1] \},\\
M_{2, v}&=(\sel {\cO_{L_v}})_\ns.
\end{split}
\end{equation*}

\noindent
\textbf{Case 2}. $|E(K_v)[2]|=2$. There is a unique prime $w$ of $L$ such that $L_w \simeq K_v(\sqrt{\D})$ is a quadratic extension of $K_v$, where $\D$ is the discriminant of $E$, and $\phi_v : L_v \simeq K_v \times L_w$. By Lemma \ref{lemma : norm relation at 2} and the norm condition we have
\begin{equation*}
\begin{split}
M_{1, v}&=\begin{cases}
\{ ([1], [1]), ([\boxtimes], [\boxtimes']) \} & \text{ if  $L_w/{K_v}$ is unramified},\\
\{([1], [1]), ([1], [\boxtimes']) \} & \text{ if  $L_w/{K_v}$ is ramified, and}
\end{cases}\\
M_{2, v}&=\{ (\text{Nm}([\a_w]), [\a_w]) : \a_w \in \cO_{L_w}^\times \}.
\end{split}
\end{equation*}

\noindent
\textbf{Case 3}. $|E(K_v)[2]|=4$. In this case, we have $\phi_v : L_v \simeq K_v \times K_v \times K_v$. 
Also, we have
\begin{equation*}
\begin{split}
M_{1, v}&=\{([1], [1], [1]), ([1], [\boxtimes], [\boxtimes]), ([\boxtimes], [1], [\boxtimes]), ([\boxtimes], [\boxtimes], [1]) \},\\
M_{2, v}&=\{([a], [b], [ab]) : a, b \in \cO_{K_v}^\times \}.
\end{split}
\end{equation*}

\ms
\section{Criteria for niceness}\label{section: criteria for niceness}
For an elliptic curve $E$ over a number field $K$ given in the form $y^2=F(x)$ with $F(x) \in \cO_K[x]$, we hope to find criteria when $E$ is nice at a finite prime $v$ of $K$.
Let
\begin{equation}\label{equation : minimal model}
y^2+a_1 xy + a_3 y = x^3 + a_2 x^2 + a_4 + a_6 \text{ with  } a_i \in \cO_{K_v}
\end{equation}
be a minimal Weierstrass equation of $E$ over $K_v$. Then there is a filtration 
\begin{equation*}
 E_1(K_v) \subset E_0(K_v) \subset E(K_v),
 \end{equation*}
where $E_0(K_v)$ (resp. $E_1(K_v)$) is the subgroup of points of $E(K_v)$ whose reduction is non-singular (resp. trivial) (cf. \cite[Ch. VII, Prop. 2.1]{Si09}). 

\ms
First, let $v$ be an odd prime of $K$. Then we have $|\im(\d_{K_v})|=|M_{i, v}|=|E(K_v)[2]|$ (cf. \cite[Lem. 3.1]{BK77}) and therefore $E/{K_v}$ is nice if and only if it is lower (or upper) nice. Recall that $D$ denotes the discriminant of $F$. 
\begin{theorem}\label{theorem: odd niceness}
If $v$ is odd, then $E$ is nice at $v$ if one of the following holds.
\begin{enumerate}
\item
$|E(K_v)[2]|=1$.
\item
$v(D)\leq 1$.
\item
$[E(K_v) : E_0(K_v)]$ is odd.
\end{enumerate}
\end{theorem}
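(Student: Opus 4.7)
The theorem assembles three different sufficient criteria at an odd prime $v$, and the plan is to dispatch them one by one by appealing to results already on hand. The key shortcut, recorded immediately before the theorem, is that at an odd prime the three groups $M_{1,v}$, $M_{2,v}$, and $\im(\d_{K_v})$ all have the same cardinality $|E(K_v)[2]|$; therefore it suffices in each case to establish a single one-sided inclusion, and the opposite inclusion will follow automatically from the cardinality count.

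For (1), if $|E(K_v)[2]|=1$ then both $M_{1,v}$ and $\im(\d_{K_v})$ are already the trivial group and niceness is immediate. Alternatively, $|E(K_v)[2]|=1$ says that $F(x)$ is irreducible over $K_v$, so $L_v = K_v[x]/(F(x))$ is a cubic field extension of $K_v$ and Proposition \ref{proposition: BPT} applies verbatim.

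For (2), the conclusion is exactly Proposition \ref{proposition: valuation is at most 1}, which has already been proved; I would simply cite it. Its underlying mechanism is the standard conductor--discriminant identity, which forces $\cO_{L_v} = \cO_{K_v}[x]/(F(x))$ whenever $v(D) \leq 1$ (since the square of the index of $\cO_{K_v}[x]/(F(x))$ in $\cO_{L_v}$ divides $D$), after which Proposition \ref{proposition: BPT} takes over.

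For (3), the claim is precisely the odd-residue-characteristic case of the Brumer--Kramer theorem recalled in the introduction; the argument in \cite{BK77} verifies that the Kummer image $\im(\d_{K_v})$ is contained in $(\sel{\cO_{L_v}})_\ns = M_{2,v}$ whenever the Tamagawa index $[E(K_v):E_0(K_v)]$ is odd, and the cardinality equality then upgrades this inclusion to niceness. I expect no real obstacle in any of the three parts: the only thing to be careful about is tracking which direction of inclusion each cited result delivers, and invoking the cardinality match at the correct moment to promote upper (or lower) niceness to full niceness.
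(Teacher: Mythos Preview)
Your proposal is correct and mirrors the paper's proof essentially verbatim: case~(1) is dispatched by the cardinality count making all three groups trivial, case~(2) is exactly Proposition~\ref{proposition: valuation is at most 1} (which in the paper's ordering actually appears \emph{after} this theorem rather than before, so ``already been proved'' is a slight misstatement of the logical order), and case~(3) is a direct citation of Brumer--Kramer \cite[Cor.~3.3 and Remark]{BK77}. Apart from that minor organizational slip, there is nothing to add.
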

\begin{proof}
The first case is trivial because $M_{i, v}=\im(\d_{K_v})=\{[1]\}$. 
For the second case, see Proposition \ref{proposition: valuation is at most 1} below, which works without assuming that $v$ is odd. Thus, the second one follows.
The third one follows from Corollary 3.3 (and Remark) in \cite{BK77}.
\end{proof}

\begin{remark}
If $E$ has split multiplicative reduction at $v$ and $[E(K_v) : E_0(K_v)]$ is even, then $E$ is not nice at $v$. (This can be proved by \cite[Prop. 4.1]{BK77}.) 
\end{remark}

\ms
For the rest of this section, we assume that $v$ is an \textbf{even} prime of $K$ unless otherwise stated. 
For simplicity, let $d=[K_v:\Q_2]$, $e=v(2)$ the ramification index of $K_v$ over $\Q_2$, $\pi$ a uniformizer of $\cO_{K_v}$ and $k=\cO_{K_v}/{(\pi)}$ the residue field. Also, let $\wt{E}$ be the reduction of $E$ modulo $(\pi)$.

\begin{lemma}\label{lemma: selmer size at 2}
We have 
\begin{equation*}
|M_{1, v}|=|E(K_v)[2]| \qa \frac{|\im (\d_{K_v})|}{|M_{1, v}|}=\frac{|M_{2, v}|}{|\im (\d_{K_v})|}=[\cO_{K_v} : 2 \cO_{K_v}]=2^d.
\end{equation*}
\end{lemma}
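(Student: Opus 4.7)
The three assertions are essentially independent counting statements, so the plan is to treat them one at a time.

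\smallskip
The equality $|M_{1,v}|=|E(K_v)[2]|$ is an immediate consequence of the case-by-case description in Section \ref{subsection: finite primes}: in Case $i$ ($i=1,2,3$) the explicit list of elements comprising $M_{1,v}$ contains exactly $2^{i-1}$ members, matching $|E(K_v)[2]|$. This is a finite verification. For the middle ratio $|\im(\d_{K_v})|/|M_{1,v}|=2^d$, I would invoke the standard formula
\[
|E(K_v)/2E(K_v)|\;=\;|E(K_v)[2]|\cdot [\cO_{K_v}:2\cO_{K_v}],
\]
which comes from applying the snake lemma to multiplication by $2$ on the filtration $0\to E_1(K_v)\to E(K_v)\to \wt{E}(k)\to 0$ together with the fact that a finite-index subgroup of the formal group $E_1(K_v)$ is isomorphic via the formal logarithm to $(\cO_{K_v},+)$ (so that multiplication by $2$ on $E_1(K_v)$ has kernel and cokernel both of order $|\cO_{K_v}/2\cO_{K_v}|$ up to the $2$-torsion and component-group contributions, which combine to give the clean formula). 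Since $\d_{K_v}$ is injective, $|\im(\d_{K_v})|=|E(K_v)/2E(K_v)|=|M_{1,v}|\cdot 2^d$. This is the even-residue-characteristic analogue of \cite[Lem.~3.1]{BK77}.

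\smallskip
For the right-hand ratio, I would compute $|M_{2,v}|$ explicitly in each of the three cases using the standard local count $|\cO_F^\times/(\cO_F^\times)^2|=2^{[F:\Q_2]+1}$ valid for any finite extension $F/\Q_2$ (which in turn follows from the structure of local units as a $\Z_2$-module plus the presence of $\mu_2$). In Case 3, $M_{2,v}=\{([a],[b],[ab])\}$ is freely parametrized by $(a,b)\in (\cO_{K_v}^\times/(\cO_{K_v}^\times)^2)^2$, giving $2^{2d+2}$ elements, hence $|M_{2,v}|/|M_{1,v}|=2^{2d}$. In Case 2, $M_{2,v}$ is parametrized by $\a_w\in\cO_{L_w}^\times/(\cO_{L_w}^\times)^2$ with $[L_w:\Q_2]=2d$, giving $2^{2d+1}$ elements, hence again $|M_{2,v}|/|M_{1,v}|=2^{2d}$. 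In Case 1, $L_v/K_v$ is a cubic field extension, so $[L_v:\Q_2]=3d$; since $3$ is odd, the composition $\cO_{K_v}^\times\hookrightarrow \cO_{L_v}^\times\xrightarrow{N}\cO_{K_v}^\times$ is cubing, i.e.\ the identity modulo squares, so the induced norm map on $(\sel{})$ is surjective, and $M_{2,v}$ (the kernel of this norm map) has order $2^{3d+1}/2^{d+1}=2^{2d}=|M_{1,v}|\cdot 2^{2d}$. In every case we obtain $|M_{2,v}|/|M_{1,v}|=(2^d)^2$, which combined with the Tate formula gives $|M_{2,v}|/|\im(\d_{K_v})|=2^d$.

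\smallskip
The only delicate points are bookkeeping: in Case 3 one must verify that the parametrization $([a],[b],[ab])$ is free (the map $(a,b)\mapsto([a],[b],[ab])$ is injective into triples modulo squares because the first two coordinates already determine the class), and in Case 1 one must correctly track the "norm-square" cut on $\sel{\cO_{L_v}}$, checking that the kernel of the norm map to $\cO_{K_v}^\times/(\cO_{K_v}^\times)^2$ coincides with $(\sel{\cO_{L_v}})_\ns$ (valid because $\cO_{K_v}^\times\cap (K_v^\times)^2 = (\cO_{K_v}^\times)^2$). Neither is a genuine mathematical obstacle; the substantive input is the Mattuck/Tate formula for $|E(K_v)/2E(K_v)|$.
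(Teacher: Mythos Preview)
Your proof is correct and follows the same approach as the paper, which simply cites ``the discussion in Section~\ref{section: local conditions} and \cite[Lem.~3.1]{BK77}''; you have spelled out the case-by-case size computations for $M_{2,v}$ and the Mattuck/Tate-type count for $|E(K_v)/2E(K_v)|$ that the paper leaves implicit in that one-line reference. One small remark: \cite[Lem.~3.1]{BK77} already treats even primes, so it is not an ``analogue'' but exactly the input you are using.
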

\begin{proof}
This follows from the discussion in Section \ref{section: local conditions} and \cite[Lem. 3.1]{BK77}.
\end{proof}

One easy criterion is the following. 
\begin{proposition}\label{proposition: cubic extension}
Suppose that $|E(K_v)[2]|=1$. Then $E$ is nice at $v$.
\end{proposition}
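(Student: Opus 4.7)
The hypothesis $|E(K_v)[2]|=1$ is equivalent to saying that $F(x)$ has no root in $K_v$; since $F$ is cubic, this forces $F$ to be irreducible over $K_v$, and hence $L_v = K_v[x]/(F(x))$ is a cubic field extension of $K_v$. This places us squarely in the first alternative of the hypothesis of Proposition \ref{proposition: BPT}, so the conclusion follows directly. The point of stating this as a separate proposition is that it identifies an intrinsic condition on the curve $E/K_v$ (namely, the triviality of the $2$-torsion) that forces niceness, without needing to analyze $L_v$ or $\mathcal O_{L_v}$ explicitly.

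For readers who prefer a self-contained argument, I would verify both inclusions directly using the description in \textbf{Case 1} of Section \ref{subsection: finite primes}, where $M_{1,v}=\{[1]\}$ and $M_{2,v}=(\sel{\cO_{L_v}})_\ns$. Lower niceness is automatic, since $M_{1,v}=\{[1]\}$ is the trivial subgroup and so is contained in every subgroup of $\sel{L_v}$, in particular in $\im(\d_{K_v})$. Upper niceness amounts to showing that for every $P=(x_P,y_P)\in E(K_v)\setminus\{O\}$, one has $w(x_P-\theta)\in 2\Z$, where $\theta\in L_v$ is the image of $x$ and $w$ is the unique prime of $L$ above $v$.

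This last step uses the standard formulas for the Kummer map, $\d_{K_v}(P)=[x_P-\theta]$, and for the norm, $N_{L_v/K_v}(x_P-\theta)=F(x_P)=y_P^2$. Combined with the valuation identity $v(N_{L_v/K_v}(\a))=f\cdot w(\a)$, where $f$ denotes the residue degree of $L_v/K_v$, this gives $f\cdot w(x_P-\theta)=2v(y_P)$. Since $[L_v:K_v]=3$ and residue characteristic is $2$, the extension $L_v/K_v$ is tame, so $f\in\{1,3\}$ is odd, forcing $w(x_P-\theta)$ to be even. Therefore $[x_P-\theta]\in(\sel{\cO_{L_v}})_\ns=M_{2,v}$, and the norm condition $[x_P-\theta]\in(\sel{L_v})_\ns$ is already built into $\im(\d_{K_v})$.

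There is essentially no obstacle here beyond keeping track of the normalizations (the valuation $w$ versus its restriction to $K_v$, and the effect of the residue degree on the norm formula); the rest is a one-line computation. In fact, once one recognizes that the hypothesis forces $L_v$ to be a field, the proposition is subsumed by Proposition \ref{proposition: BPT}, and a citation there would suffice.
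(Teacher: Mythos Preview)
Your proof is correct and your direct argument is essentially the paper's own: lower niceness is trivial since $M_{1,v}=\{[1]\}$, and upper niceness follows because $N_{L_v/K_v}(x_P-\theta)=y_P^2$ has even $v$-valuation and the residue degree $f$ divides $3$, hence is odd. One small remark: the appeal to tameness is unnecessary---the fact that $f\in\{1,3\}$ follows immediately from $f\mid [L_v:K_v]=3$, independent of the residue characteristic or of any tameness consideration.
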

\begin{proof}
It suffices to show that $E$ is upper nice at $v$, or equivalently, the valuation of $\d_{K_v}([P])$ for any $P \in E(K_v)$ is even. Let $P\in E(K_v)$. Then the valuation of the norm of $\d_{K_v}([P])$ is even because $y(P)^2=F(x(P))=N(\d_{K_v}([P]))$. Since the degree $[L_w : K_v]$ is $3$, the valuation of $\d_{K_v}([P])$ is also even. This completes the proof.
\end{proof}

Another criterion motivated by \cite{Li19} is the following.
\begin{proposition}\label{proposition: valuation is at most 1}
Let $D$ be the discriminant of $F$. If $v(D)\leq 1$, then $E$ is nice at $v$.
\end{proposition}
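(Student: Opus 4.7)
The plan is to reduce this directly to Proposition \ref{proposition: BPT} by showing that the hypothesis $v(D) \leq 1$ forces $\cO_{L_v} = \cO_{K_v}[x]/(F(x))$. The key algebraic input is the conductor-discriminant relation for orders: if $\alpha \in L_v$ is the image of $x$ and $R = \cO_{K_v}[\alpha]$ sits inside the maximal order $\cO_{L_v}$ (the integral closure of $\cO_{K_v}$ in the étale algebra $L_v$), then
\begin{equation*}
\text{disc}(R/\cO_{K_v}) \;=\; [\cO_{L_v}:R]^2 \cdot \text{disc}(\cO_{L_v}/\cO_{K_v}).
\end{equation*}

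First, I would identify $\text{disc}(R/\cO_{K_v})$ with $D\cdot \cO_{K_v}$, which is the standard formula for the discriminant of a monic polynomial basis. Second, I would take $v$-adic valuations on both sides, obtaining
\begin{equation*}
v(D) \;=\; 2\,v\bigl([\cO_{L_v}:R]\bigr) + v\bigl(\text{disc}(L_v/K_v)\bigr).
\end{equation*}
Both terms on the right are nonnegative integers, and the first is even. Under the hypothesis $v(D) \leq 1$, the even term $2\,v([\cO_{L_v}:R])$ must vanish, so the index $[\cO_{L_v}:R]$ is a unit, i.e., $\cO_{L_v} = R = \cO_{K_v}[x]/(F(x))$.

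Finally, I would invoke Proposition \ref{proposition: BPT} directly to conclude that $E$ is nice at $v$. The statement of that proposition requires no assumption on the parity of $v$, so this proof works uniformly whether $v$ is odd or even.

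The only mild subtlety — which is not really an obstacle — is that $L_v$ need not be a field; it is a product $L_{w_1} \times \cdots \times L_{w_n}$. One has to take care that the order/discriminant formula above continues to hold in this étale-algebra setting, with $\cO_{L_v} = \prod_i \cO_{L_{w_i}}$ and discriminants computed via the trace form on the $\cO_{K_v}$-algebra $L_v$. This is standard, and both sides decompose compatibly over the factors, so the displayed relation still holds and the argument goes through without change.
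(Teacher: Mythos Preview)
Your proposal is correct and follows the same overall strategy as the paper: show that $v(D)\leq 1$ forces $\cO_{L_v}=\cO_{K_v}[x]/(F(x))$, and then invoke Proposition~\ref{proposition: BPT} (equivalently \cite[Th.~1.11]{BPT}). The difference lies in how you establish that ring equality. The paper proves a separate lemma (Lemma~\ref{lemma: etale squarefree discriminant}) by factoring $F=\prod_i F_i$ into irreducibles over $\cO_{K_v}$, applying the index--discriminant formula to each field factor $K_v[x]/(F_i)$ individually, and then verifying by hand, via an evaluation/resultant argument, that the $F_i$ are pairwise coprime in $\cO_{K_v}[x]$ so that the Chinese remainder theorem assembles the pieces into $\cO_{K_v}[x]/(F(x))\simeq\prod_i \cO_{L_{w_i}}$. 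Your route bypasses this decomposition entirely by applying the index--discriminant relation once to the inclusion $\cO_{K_v}[x]/(F(x))\subset\cO_{L_v}$ of lattices in the \'etale algebra $L_v$; the change-of-lattice formula for the trace form over the DVR $\cO_{K_v}$ gives the displayed identity directly, and the parity argument finishes. This is shorter and avoids the CRT step; the paper's version, on the other hand, makes the structure of $\cO_{L_v}$ as a product of local rings of integers explicit, which is perhaps more transparent if one wants to see exactly where the condition $v(D)\leq 1$ is consumed (one factor may have discriminant of valuation~$1$, all others are unramified).
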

\begin{proof}
By Lemma \ref{lemma: etale squarefree discriminant} below, $E$ satisfies the condition $(\dagger.\text{ii})$ in \cite[Def. 1.6]{BPT}. Thus, the result follows by Theorem 1.11 of \textit{op. cit.}
\end{proof}
\begin{lemma}\label{lemma: etale squarefree discriminant}
Let $F(x) \in \cO_{K_v}[x]$ be a monic and separable polynomial with discriminant $D$. If $v(D)\leq 1$, then  the ring of integers of $K_v[x]/(F(x))$ is $\cO_{K_v}[x]/(F(x))$.
\end{lemma}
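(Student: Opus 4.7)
The plan is to use the classical conductor-discriminant relation between the discriminant of a monic polynomial and the discriminant of the field extension it generates. Writing $\theta$ for the image of $x$ in $L_v := K_v[x]/(F(x))$, the order $A := \cO_{K_v}[\theta] \subset \cO_{L_v}$ has $\cO_{K_v}$-module discriminant equal to $D$ up to squares of units, while the relative discriminant ideal $d_{L_v/K_v}$ differs from $\text{disc}(A)$ precisely by the square of the $\cO_{K_v}$-module index $[\cO_{L_v}:A]$. Concretely, I would invoke the identity of ideals
\begin{equation*}
D \cdot \cO_{K_v} \;=\; [\cO_{L_v}:A]^{2} \cdot d_{L_v/K_v}
\end{equation*}
(see for instance Serre's \emph{Local Fields}, Ch.~III, or Neukirch's \emph{ANT}, Ch.~III, Prop.~2.12).

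Taking the normalized $v$-adic valuation of both sides gives
\begin{equation*}
v(D) \;=\; 2\,v\!\left([\cO_{L_v}:A]\right) + v(d_{L_v/K_v}),
\end{equation*}
and all three terms on the right are non-negative integers. The hypothesis $v(D) \leq 1$ forces the even term $2\,v([\cO_{L_v}:A])$ to vanish, whence the module index is the unit ideal. This means $A = \cO_{L_v}$, which is exactly what is to be shown.

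No step looks delicate; the only point requiring a little care is recalling that the module index $[\cO_{L_v}:A]$ on the right of the discriminant formula is an ideal of $\cO_{K_v}$ (the Fitting ideal of the cokernel) whose $v$-adic valuation is a non-negative integer, so that the parity argument really does apply. Everything else is immediate from the cited formula, so the proof reduces to citing the appropriate reference and performing the one-line parity observation.
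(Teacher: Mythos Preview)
Your argument is correct, and it is more streamlined than the paper's. The paper proceeds by first factoring $F=\prod_i F_i$ into monic irreducibles over $\cO_{K_v}$, then (i) applying the index--discriminant relation $\textnormal{disc}(F_i)=\textnormal{disc}(R_i)\cdot[R_i:\cO_{K_v}[x]/(F_i)]^2$ separately to each irreducible factor to conclude that $\cO_{K_v}[x]/(F_i)$ is maximal, and (ii) proving by hand that the $F_i$ are pairwise coprime in $\cO_{K_v}[x]$ (via a resultant computation using $v(D)\le 1$) so that the Chinese remainder theorem yields $\cO_{K_v}[x]/(F)\simeq\prod_i\cO_{K_v}[x]/(F_i)$. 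You instead apply the index--discriminant relation once, directly to the order $A=\cO_{K_v}[\theta]$ inside the maximal order $\cO_{L_v}$ of the full \'etale algebra $L_v$, which bypasses both the factorization and the CRT step. The advantage of the paper's route is that it only invokes the formula in the field case, exactly as stated in Neukirch; the advantage of yours is brevity. The one point to flag in your write-up is that the references you cite (Serre, \emph{Local Fields}, Ch.~III; Neukirch, Ch.~III, Prop.~2.12) literally treat field extensions, not \'etale algebras. The formula you need is really the elementary lattice identity $\textnormal{disc}(M)=(\det T)^2\,\textnormal{disc}(N)$ for full $\cO_{K_v}$-lattices $M\subset N$ in a $K_v$-vector space with a nondegenerate symmetric bilinear form (here the trace form on $L_v$), together with the standard fact that the discriminant of the power basis $1,\theta,\dots,\theta^{\deg F-1}$ with respect to the trace form equals the polynomial discriminant $D$. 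It would be cleaner to state it that way rather than to point to the field-extension version.
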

\begin{proof}
Let $F(x)=\prod_{i=1}^n F_i(x)$ with $F_i(x) \in \cO_{K_v}[x]$ monic, separable and irreducible. 
Note that $K_v[x]/{(F(x))} \simeq \prod_{i=1}^n K_v[x]/{(F_i(x))}$. Thus, it suffices to show that
\begin{enumerate}
\item
the ring of integers of $K_v[x]/{(F_i(x))}$ is $\cO_{K_v}[x]/{(F_i(x))}$; and
\item
there is an isomorphism:
\begin{equation*}
\cO_{K_v}[x]/{(F(x))} \simeq \prod_{i=1}^n \cO_{K_v}[x]/{(F_i(x))}.
\end{equation*}
\end{enumerate}

By definition, we have $\prod_{i=1}^n \textnormal{disc}(F_i) \mid D$, where $\textnormal{disc}(F_i)$ is the discriminant of $F_i$.
Since $v(D)\leq 1$, we may assume that $v(\textnormal{disc}(F_i))=0$ for all $1\leq i \leq n-1$ and $v(\textnormal{disc}(F_n))\leq 1$.

{\it Proof of }(1). Since $F_i$ are irreducible, we have
\begin{equation*}
 \textnormal{disc}(F_i) =\textnormal{disc}(R_i) \cdot \left[R_i : \cO_{K_v}[x]/{(F_i(x))}\right]^2,
\end{equation*}
where $R_i$ is the ring of integers of $K_v[x]/{(F_i(x))}$. Since $v(\textnormal{disc}(F_i)) \leq 1$ for all $i$, we have $R_i=\cO_{K_v}[x]/{(F_i(x))}$, as desired. \qed

{\it Proof of }(2). If $n=1$, it is vacuous, so we assume that $n\geq 2$. Let $G(x)=\prod_{i=2}^n F_i(x) \in \cO_{K_v}[x]$ so that $F(x)=F_1(x) \cdot G(x)$. Also, let $\alpha$ be a root of $F_1(x)$. Since $F_1(x)$ is monic and irreducible, $F_1(x)$ is the minimal polynomial of $\alpha$. Let $\cO_1:=\cO_{K_v}[\alpha] \simeq \cO_{K_v}[x]/{(F_1(x))}$, and let $w$ be the (normalized) valuation of $\cO_1$. Since the discriminant of $F_1(x)$ is a unit in $\cO_{K_v}$, $\cO_1/\cO_{K_v}$ is unramified and so $w(D)=v(D)\leq 1$. Also since $G(\alpha)^2$ divides $D$,\footnote{For simplicity, let $\a_i$ (with $1\leq i \leq t$) be the roots of $F(x)$ so that $\a_i$ (with $1\leq i \leq s$) are the roots of $F_1(x)$ (with $\a=\a_1$) and $\a_j$ (with $s<j\leq t$) are the roots of $G(x)$. Then $G(\alpha)=G(\alpha_1)=\prod_{j=s+1}^t (\alpha_1-\alpha_j)$ and $D=\prod_{1\leq i<j\leq t}(\a_i-\a_j)^2$.} $w(G(\alpha))=0$ and hence $(G(\alpha))=\cO_1$.
Now, we consider the natural evaluation map given by $\alpha$:
\begin{equation*}
\textnormal{ev}_\alpha: \cO_{K_v}[x] \surj \cO_1=\cO_{K_v}[\alpha] \simeq \cO_{K_v}[x]/{(F_1(x))},
\end{equation*}
and the induced isomorphism:
\begin{equation*}
\cO_{K_v}[x]/{(F_1(x), G(x))} \simeq \cO_1/{(G(\alpha))}=1.
\end{equation*}
Thus, $F_1(x)$ and $G(x)$ are relatively prime and therefore we have an isomorphism:
\begin{equation*}
\cO_{K_v}[x]/{(F(x))} \simeq \cO_{K_v}[x]/{(F_1(x))} \times \cO_{K_v}[x]/{(G(x))}.
\end{equation*}
Since the discriminant of $F_i(x)$ is a unit in $\cO_{K_v}$ for any $1\leq i \leq n-1$, we can apply the same argument successively. Accordingly, we get
\begin{equation*}
\cO_{K_v}[x]/{(F(x))}\simeq \prod_{i=1}^n \cO_{K_v}[x]/{(F_i(x))}.
\end{equation*}
This completes the proof.
\end{proof}

\begin{remark}
By Theorem \ref{theorem: odd niceness} and Proposition \ref{proposition: valuation is at most 1}, one can see that the elliptic curves studied by Li \cite{Li19} (see Assumption 2.1 there) are nice. 
\end{remark}

\ms
From now on, we study a generalization of the work of Brumer and Kramer \cite{BK77} to the case without the assumption $K_v/{\Q_2}$ is unramified. In other words, we discuss criteria when $E$ has semistable reduction at $v$.
\subsection{Good reduction}
Our main theorem in this subsection is the following.

\begin{theorem}\label{theorem: good reduction}
Suppose that $E$ has good reduction at $v$.
\begin{enumerate} 
\item
If $E$ has ordinary reduction at $v$, then $E$ is nice at $v$. 

\item
Suppose that $E$ has supersingular reduction at $v$ and $e$ is not divisible by $3$. If $v(a_1)$ is odd or $3v(a_1) \geq 2e$, then $L_v$ is a cubic ramified extension of $K_v$ and hence $E$ is nice at $v$. 
\end{enumerate}
\end{theorem}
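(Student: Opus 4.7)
Both parts will ultimately appeal to Lemma~\ref{lemma: selmer size at 2}, which gives $|\im(\d_{K_v})| = 2^d \cdot |M_{1,v}|$ and $|M_{2,v}|/|M_{1,v}| = 2^{2d}$. Niceness therefore reduces to the set-theoretic inclusions $M_{1,v} \subseteq \im(\d_{K_v}) \subseteq M_{2,v}$ (or, by cardinality, to just one of them once the other is known).

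For the ordinary case~(1), I would combine the short exact sequence $0 \to E_1(K_v) \to E(K_v) \to \wt E(k) \to 0$ from good reduction with the explicit Kummer formula $\d_{K_v}([P]) = [x(P) - T]$ in $(\sel{L_v})_\ns$, where $T$ is the image of $x$ in $L_v$. The ordinary hypothesis governs the height-$1$ formal group and yields a clean structure for the 2-torsion via the connected--\'etale filtration of $E[2^\infty]$. The inclusion $\im(\d_{K_v}) \subseteq M_{2,v}$ is obtained by splitting on the reduction of $P$: if $\wt P \neq \wt O$ then $x(P) - T \in \cO_{L_v}$ and one reads the valuation $w(x(P) - T) \in 2\Z$ off the reduction of $F$ modulo $w$; if instead $P \in E_1(K_v)$, the formal parameter forces $v(x(P)) = -2m$ for some $m \geq 1$, and hence $w(x(P) - T) = -2m \cdot e_{w/v}$ is even. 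The reverse inclusion $M_{1,v} \subseteq \im(\d_{K_v})$ is obtained by exhibiting each class of $M_{1,v}$ as the Kummer image of an explicit point (either a 2-torsion point or a lift from the \'etale reduction).

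For the supersingular case~(2), the strategy is to prove $F(x)$ is irreducible over $K_v$ and $L_v/K_v$ is totally ramified of degree $3$; then $|E(K_v)[2]| = 1$ and Proposition~\ref{proposition: cubic extension} gives niceness. Supersingularity forces $\wt E(\bar k)[2] = 0$, so every nontrivial 2-torsion of $E$ over $\bar K_v$ lies in the formal group $\hat E(\mathfrak m_{\bar K_v})$ and arises from a nonzero root of
\[
[2]_{\hat E}(T) \;=\; 2T - a_1 T^2 + (\text{higher order terms}),
\]
whose $T^4$-coefficient is a $v$-adic unit by the height-$2$ property. The Newton polygon of $[2]_{\hat E}(T)/T$ has relevant vertices among $(0, e)$, $(1, v(a_1))$, and $(3, 0)$, and I would argue that $3 \nmid e$ together with the disjunctive hypothesis ($v(a_1)$ odd or $3 v(a_1) \geq 2e$) forces a single-slope polygon of slope $-e/3$. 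This gives all three nonzero 2-torsion $T$-coordinates the non-integer valuation $e/3$, whence they form a single Galois orbit over $K_v$, $F$ is irreducible, and $L_v/K_v$ is cubic totally ramified.

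The principal obstacle is the Newton polygon analysis in Part~(2): one must expand $[2]_{\hat E}(T)$ to sufficient order to pin down the $T^4$-coefficient as a unit and then verify that each disjunct of the hypothesis on $v(a_1)$ truly precludes an interior vertex at $(1, v(a_1))$ (which would produce a multi-slope polygon and, in the wrong regime, rational 2-torsion). Part~(1) is comparatively standard once the formal-group structure and the explicit Kummer formula are in place; the only delicate step is the cocycle identification realizing $M_{1,v}$ inside $\im(\d_{K_v})$.
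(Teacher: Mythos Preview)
Your Newton-polygon step in Part~(2) has a genuine gap. The claim that the disjunctive hypothesis forces a single slope of $-e/3$ fails when $m:=v(a_1)$ is odd with $3m<2e$ (e.g.\ $e=2$, $m=1$): then the point $(1,m)$ lies strictly \emph{below} the segment from $(0,e)$ to $(3,0)$, so the polygon of $[2]_{\hat E}(T)/T$ has two slopes, $-(e-m)$ and $-m/2$, giving one root of integer valuation $e-m$ and two of half-integer valuation $m/2$. Since the three roots do not all share a valuation, the distinguished cubic is reducible over $K_v$, and your argument cannot conclude that $F$ is irreducible in this regime. The paper does not use the formal group here; it works directly with the roots $\alpha,\beta,\gamma$ of the short-Weierstrass cubic and, after pinning down $v(\alpha)=2(m-e)$ and $v(\beta)=v(\gamma)=-m$, invokes the identity $\bigl(\tfrac{a_1\beta+a_3}{2}\bigr)^2+\beta^3+a_2\beta^2+a_4\beta+a_6=0$ to get $2(v(a_1\beta+a_3)-e)=-3m$, which is read as a parity obstruction for $m$ odd. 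That step, not a single-slope polygon, is what carries the case you are missing.

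For Part~(1) your outline is close to the paper's, but two of the stated claims do not hold as written. The assertion ``$x(P)-T\in\cO_{L_v}$ when $\wt P\neq\wt O$'' presupposes $T$ integral; at an even prime the short-Weierstrass cubic obtained from a \emph{minimal} model has coefficients in $\cO_{K_v}[1/4]$, and in the paper's normalization one root $\alpha=-1/4-a_2'+O(4)$ has $v(\alpha)=-2e$. Likewise the formula $w(x(P)-T)=-2m\,e_{w/v}$ for $P\in E_1(K_v)$ fails at the component corresponding to $\alpha$ whenever $v(x(P))>v(\alpha)$. The paper repairs both issues by (i) changing coordinates so that $\beta,\gamma\in O(2)$ while $\alpha$ has valuation $-2e$, (ii) passing to an unramified extension $K'/K_v$ over which $\wt P\in 2\wt E(k')$, thereby reducing the upper-nice verification entirely to $E_1$, and (iii) reading evenness of $v(x(P)-\beta)$ and $v(x(P)-\gamma)$ from the explicit formal series; the $\alpha$-factor is then forced by $y(P)^2=F(x(P))$. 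Your sketch is missing the reduction step~(ii) and the control~(i) on where the roots lie.
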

\begin{proof}
First, suppose that $E$ has ordinary reduction at $v$. By Lemma \ref{lemma: criterion for ord vs ss} below, we have $v(a_1)=0$. By change of variables
$x\mapsto a_1^2 x -a_1^{-1}a_3$ and $y\mapsto a_1^3 y$, we have a new minimal model of the form
\begin{equation*}
y^2+xy=x^3+a_2' x^2+a_4' x +a_6'.
\end{equation*}
Then the $x$-coordinates of points of order two satisfy
\begin{equation*}
F(x)=x^3+(1/4+a_2')x^2+ a_4'x+a_6'=0.
\end{equation*}
Let $\a, \b$ and $\g$ be three roots of $F$. By Hensel's lemma, we may take
\begin{equation*}
\a=-1/4-a_2'+4a_4'+O(16) \in K_v
\end{equation*}
and $\b, \g \in O(2)$, where $t=O(s)$ means $v(ts^{-1}) \geq 0$.\footnote{There is a sign typo in the expression of $\a$ in proof of Lemma 3.5 of \cite{BK77}.} 

We claim that $E$ is upper nice at $v$. In other words, for any $P \in E(K_v)$ the valuations of $x(P)-\a$, $x(P)-\b$ and $x(P)-\g$ are all even. Let $P\in E(K_v)$. Then there is a point $Q \in \wt{E}(\ov{k})$ such that $2Q=\wt{P}$. In fact, we can take a finite extension $k'$ of $k$ so that $Q \in \wt{E}(k')$. Let $K'$ be the unramified extension of $K_v$ whose residue field is $k'$.
By the commutative diagram with exact rows
\begin{equation*}
\xyv {1.5}
\xymatrix{
E_1(K_v)/{2E_1(K_v)} \ar[r] \ar[d] & E(K_v)/{2E(K_v)} \ar[r] \ar[d]^-g & \wt{E}(k)/{2\wt{E}(k)} \ar[r] \ar[d] & 0\\
E_1(K')/{2E_1(K')} \ar[r]^-{f} & E(K')/{2E(K')} \ar[r] & \wt{E}(k')/{2\wt{E}(k')} \ar[r] & 0,
}
\end{equation*}
it is easy to see that $g([P]) \in \im (f)$. Consider another commutative diagram 
\begin{equation*}
\xyv{1.5}
\xymatrix{
E(K_v)/{2E(K_v)} \ar[r]^-{\d_{K_v}} \ar[d]^-g & \sel {L_v} \ar[d] \\
E(K')/{2E(K')} \ar[r]^-{\d_{K'}} & \sel {L'},
}
\end{equation*}
where $L'=K'[T]/{(F(x))}$. If $\d_{K'}(g([P])) \in \sel {\cO_{L'}}$ then $\d_{K_v}([P]) \in \sel {\cO_{L_v}}$ because $K'/{K_v}$ is unramified. Thus, to prove that $E$ is upper nice at $v$, it suffices to prove that 
for any $P \in E_1(K_v)$, the valuations of $x(P)-\b$ and $x(P)-\g$ are both even.\footnote{
If so, the valuation of $x(P)-\a$ is also even because $y(P)^2=F(x(P))=(x(P)-\a)(x(P)-\b)(x(P)-\g)$.}
By \cite[Ch. VII, Prop. 2.2]{Si09}, for $P(z) \in E_1(K_v)$ we have
\begin{equation*}
\begin{split}
x(P(z))-\b&=z^{-2}(1-z-(a_2'+\b) z^2 + O(z^3)) \qa \\
x(P(z))-\g&=z^{-2}(1-z-(a_2'+\g) z^2 + O(z^3)).
\end{split}
\end{equation*}
Since $\b, \g \in O(2)$, the valuations of $x(P(z))-\b$ and $x(P(z))-\g$ are even. This proves the claim.

Next, by Lemmas \ref{lemma: ordinary split}, \ref{lemma: ordinary non-split} and \ref{lemma: ordinary non-split ramified} below $E$ is lower nice at $v$.

Lastly, suppose that $E$ has supersingular reduction at $v$ and $e$ is not divisible by $3$. 
We claim that $L_v$ is a cubic ramified extension of $K_v$ (and hence $E$ is nice at $v$ by Proposition \ref{proposition: cubic extension}) if either $v(a_1)$ is odd or $3v(a_1)\geq 2e$. Suppose that $L_v$ is not a cubic ramified extension of $K_v$. We will derive a contradiction under the assumption that either $v(a_1)$ is odd or $3v(a_1) \geq 2e$.
Let $\a, \b$ and $\g$ be the roots of 
\begin{equation*}
F(x)=x^3+(a_1^2/4+a_2)x^2+({a_1a_3}/2+a_4)x+(a_3^2/4+a_6)=0.
\end{equation*}
(Note that $y^2=F(x)$ is a model of the given elliptic curve.)
Since $L_v$ is not a cubic ramified extension of $K_v$, we may assume that $v(\a) \in \Z$ and $v(\b), v(\g) \in \frac{1}{2}\Z$.
Note that since $\wt{E}$ is supersingular, we have $v(a_1)>0$ and $v(a_3)=0$ by Lemma \ref{lemma: criterion for ord vs ss} below.
Suppose that $v(a_1) \geq v(2)$. 
Since
\begin{equation*}
F(\a)=\a^3+(a_1^2/4+a_2)\a^2+(a_1a_3/2+a_4)\a+(a_3^2/4+6)=0,
\end{equation*}
there are at least two terms which have the smallest valuation among others.  
By our assumption, we have $v(a_1^2/4+a_2) \geq 0$ and $v(a_1a_3/2+a_4) \geq 0$. Since $v(a_3^2/4+a_6)=-2e <0$, we have $3v(\a)=-2e$, which is a contradiction because $e$ is not divisible by $3$.

For simplicity, let $m=v(a_1)$ and $n=v(\a)$. Suppose that $0< m < e$. Then $v(a_1^2/4+a_2)=2(m-e) < v(a_1a_3/2+a_4)=m-e<0$. Since $F(\a)=0$,we have $n<0$ (otherwise $F(\a)$ would have valuation $-v(4)<0$). Also, since 
\begin{equation*}
2(n+m-e)=v((a_1^2/4+a_2)\a^2) < v((a_1a_3/2+a_4)\a)=n+m-e, 
\end{equation*}
we must have either $n=2(m-e)$ or $3n>-2e$ (and $m=-n$). Thus, if $3m \geq 2e$ then the latter cannot happen and hence $n=2(m-e)$. Similarly, we get $v(\b)=v(\g)=2(m-e)$. This is a contradiction because $v(\a\b\g)=6(m-e)\neq -2e$. 
Lastly, if $3m<2e$ then we have $\{v(\a), v(\b), v(\g)\} \subset \{2(m-e), -m \}$. Since $v(\a\b\g)=-2e$, 
we may arrange $\a, \b, \g$ so that $v(\a)=2(m-e)$  and $v(\b)=v(\g)=-m$. Since $v(a_1 \b+a_3) \geq 0$ and 
\begin{equation*}
\begin{split}
F(\b)&=\b^3+(a_1^2/4+a_2)\b^2+(a_1a_3/2+a_4)\b+(a_3^2/4+a_6)\\
&=(\frac{a_1\b+a_3}{2})^2+\b^3+a_2\b^2+a_4\b+a_6=0,
\end{split}
\end{equation*}
we have $2(v(a_1\b+a_3)-e)=3v(\b)=-3m$, which is a contradiction if $m$ is odd. This completes the proof.
\end{proof}

\begin{lemma}\label{lemma: criterion for ord vs ss}
Suppose that $E$ has good reduction at $v$. Then either $v(a_1)=0$ or $v(a_3)=0$. Furthermore, $E$ has supersingular reduction at $v$ if and only if $v(a_1)>0$.
\end{lemma}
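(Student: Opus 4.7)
The plan is to exploit the fact that the residue field $k$ has characteristic $2$, so both parts really come down to standard features of elliptic curves over characteristic $2$ fields applied to the reduction $\wt{E}$.

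For the first assertion, the plan is to argue by contradiction: assume $v(a_1) > 0$ and $v(a_3) > 0$. Then the reduction $\wt{E}/k$ has Weierstrass equation
\begin{equation*}
y^2 = x^3 + \wt{a}_2 x^2 + \wt{a}_4 x + \wt{a}_6.
\end{equation*}
In characteristic $2$ the partial derivative with respect to $y$ is identically zero, while the partial derivative with respect to $x$ reduces to $\wt{a}_4 - x^2$. Thus any point $(x_0, y_0) \in \wt{E}(\ov{k})$ with $x_0^2 = \wt{a}_4$ and $y_0^2 = x_0^3 + \wt{a}_2 x_0^2 + \wt{a}_4 x_0 + \wt{a}_6$ is a singular point of $\wt{E}$, and such a point always exists in $\ov{k}$. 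This contradicts the good reduction hypothesis, so at least one of $v(a_1)$ and $v(a_3)$ must be $0$.

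For the second assertion, I would invoke the standard characterization of supersingularity in characteristic $2$: an elliptic curve over a field of characteristic $2$ given by a Weierstrass equation is supersingular if and only if the coefficient of $xy$ vanishes. (One proof: in characteristic $2$ the Hasse invariant can be identified with $\wt{a}_1$, equivalently the formal group law has height $\geq 2$ exactly when $\wt{a}_1 = 0$; see e.g.\ \cite[Ch.~V, Th.~3.1 and Ch.~IV, Prop.~7.2]{Si09}.) Applied to $\wt{E}$, this says $\wt{E}$ is supersingular iff $\wt{a}_1 = 0$ in $k$, which is exactly the condition $v(a_1) > 0$.

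I do not expect real obstacles; the only thing to take care of is making sure the singular point produced in the first step actually exists, which follows immediately by taking $x_0 \in \ov{k}$ to be a square root of $\wt{a}_4$ and then a square root of the corresponding value of the cubic to obtain $y_0$. Everything else is invoking standard characteristic-$2$ facts, so this lemma should be short.
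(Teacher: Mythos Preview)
Your argument is correct, and it takes a genuinely different (and arguably cleaner) route from the paper in both parts.

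For the first assertion, the paper reduces to the discriminant formula: if $v(a_1)>0$ and $v(a_3)>0$ then $v(b_2)>0$ and $v(b_6)>0$, and one checks from the standard expression for $\Delta$ that $v(\Delta^{\min})>0$, contradicting good reduction. Your direct exhibition of a singular point on $\wt{E}$ is the geometric counterpart of this same computation (a Weierstrass cubic is singular exactly when its discriminant vanishes), so the two arguments are equivalent in content but different in presentation.

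For the second assertion the two proofs diverge more noticeably. The paper handles the two implications separately: for supersingular $\Rightarrow v(a_1)>0$ it invokes the uniqueness of the supersingular curve $y^2+y=x^3$ over $\ov{\F_2}$ together with the observation that the substitution $ua_1'=a_1+2s$ shows the vanishing of $a_1$ is preserved under Weierstrass coordinate changes in characteristic~$2$; for the converse it computes that $v(a_1)>0$ forces $v(c_4)>0$, hence $j(\wt{E})=0$, and then appeals to the fact that $j=0$ is supersingular in characteristic~$2$. Your approach short-circuits both directions at once via the formal-group/Hasse-invariant identification $[2](T)=-a_1T^2+O(T^3)$ over $k$, which is a perfectly legitimate and more uniform argument. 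The paper's version has the minor advantage of being entirely self-contained from the Weierstrass formulas, whereas yours leans on the formal-group machinery; but nothing in your proposal is problematic.
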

\begin{proof}
Since $E$ has good reduction at $v$, $v(\D^{\text{min}})=0$ by \cite[Ch. VII, Prop.5.1(a)]{Si09}, where $\D^{\text{min}}$ is the discriminant of a minimal model (\ref{equation : minimal model}). Suppose that $v(a_1) >0$ and $v(a_3)>0$. Then by the formula on page 42 of \textit{op. cit.}, we have $v(b_2) > 0$ and $v(b_6)>0$. Thus, $v(\D^{\text{min}})>0$, which is a contradiction. So we have either $v(a_1)=0$ or $v(a_3)=0$.

Next, suppose that $E$ has supersingular reduction at $v$. Since there is a unique supersingular elliptic curve $E_{\text{ss}}: y^2+y=x^3$ over $\ov{\F_2}$ (cf. page 148 of \textit{op. cit.}), we have $E \times_{\cO_{K_v}} \ov{\F_2} \simeq E_{\text{ss}}$. Since the coordinate change given by 
\begin{equation*}
x=u^2 x'+r \qa y=u^3 y'+u^2 sx'+t \text{ with } u \in \cO_{K_v}^\times
\end{equation*}
makes $ua_1'=a_1+2s$ and $u^3 a_3'=a_3+ra_1+2t$, we have $v(a_1')=0$ if and only if $v(a_1)=0$. Since $a_1'=0$ for $E_{\text{ss}}$, we must have $v(a_1) >0$. (Similarly, we get $v(a_3)=0$.)

Lastly, suppose that $v(a_1)>0$. Then $v(b_2)>0$ and hence $v(c_4)>0$. Thus, the $j$-invariant of the reduction $\wt{E}$ is $0$ and hence it has good supersingular reduction (cf. Exercise 5.7 of Chapter V in \textit{op. cit.}) This completes the proof.
\end{proof}

Below we use the same notation as in Section \ref{subsection: finite primes}.
\begin{lemma}\label{lemma: ordinary split}
Suppose that $E$ has ordinary reduction at $v$ and $\phi_v : L_v \simeq K_v \times K_v \times K_v$. 
Then we have
\begin{equation*}
\im (\d_{K_v})=\{ ([1], [a], [a]), ([\boxtimes], [a], [a\boxtimes]) : a \in \cO_{K_v}^\times \}.
\end{equation*}
In particular, $E$ is lower nice at $v$.
\end{lemma}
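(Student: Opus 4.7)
The plan is to establish the equality $\im(\d_{K_v})=S$, where $S$ denotes the right-hand side of the claim, by matching cardinalities and then proving one containment; the ``lower nice'' conclusion is then immediate. By Lemma~\ref{lemma: selmer size at 2}, $|\im(\d_{K_v})|=|M_{1,v}|\cdot 2^d=2^{d+2}$, with $d=[K_v:\Q_2]$. On the other hand, $S$ is parameterized injectively by the product $\{[1],[\boxtimes]\}\times\cO_{K_v}^\times/(\cO_{K_v}^\times)^2$ (the first coordinate of a triple recovers which branch we are in, and the second recovers $[a]$), and $|\cO_{K_v}^\times/(\cO_{K_v}^\times)^2|=2^{d+1}$, so $|S|=2^{d+2}$. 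A direct multiplication check further confirms that $S$ is a subgroup of $(\sel{L_v})_\ns$; so it suffices to prove $\im(\d_{K_v})\subseteq S$.

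Using the coordinate change from the proof of Theorem~\ref{theorem: good reduction}, write $F(x)=(x-\alpha)(x-\beta)(x-\gamma)$ with $\alpha\equiv-\tfrac14\pmod{\cO_{K_v}}$ (so $v(\alpha)=-2e$) and $\beta,\gamma\in 2\cO_{K_v}$, and label the primes of $L$ above $v$ so that the first factor corresponds to $\alpha$. Note that $P_1:=(\alpha,0)\in E_1(K_v)$, since $v(x(P_1))=-2e<0$. For $P\in E(K_v)[2]$, direct inspection shows that $\d_{K_v}(P)$ lies in $M_{1,v}$, which is the subset of $S$ with $a\in\{1,\boxtimes\}$. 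For non-2-torsion $P$, we have $\d_{K_v}(P)=([x(P)-\alpha],[x(P)-\beta],[x(P)-\gamma])$, with product equal to $[y(P)^2]=[1]$, so the norm-square relation forces $\d_{K_v}(P)\in S$ once we establish (a) $[x(P)-\beta],[x(P)-\gamma]\in\sel{\cO_{K_v}}$ (equivalently, both valuations are even), and (b) $[x(P)-\alpha]\in\{[1],[\boxtimes]\}$ (equivalently, $K_v(\sqrt{x(P)-\alpha})/K_v$ is unramified).

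Property (a) is precisely what is established in the proof of Theorem~\ref{theorem: good reduction}: the commutative-diagram argument reduces to the case $P\in E_1(K')$ for some finite unramified extension $K'/K_v$, and then the formal-group expansion $x(P(z))-\beta=z^{-2}(1-z+O(z^2))$ forces $v(x(P)-\beta)=-2v(z)$, which is even (and similarly for $\gamma$). Property (b) is the main content, and I would prove it by a direct computation in two cases. If $P\in E_0(K_v)\setminus E_1(K_v)$, then $v(x(P))\ge 0>-2e$, and
\[
x(P)-\alpha \;=\; -\alpha\bigl(1-x(P)/\alpha\bigr) \;=\; \tfrac14\bigl(1+4a_2'+O(16)\bigr)\bigl(1-x(P)/\alpha\bigr),
\]
where both factors to the right of $\tfrac14$ lie in $1+4\cO_{K_v}$ (using $v(x(P)/\alpha)\ge 2e$). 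Since $\tfrac14$ is a square, Lemma~\ref{lemma: unramified condition at 2} yields (b). If $P\in E_1(K_v)$ with formal parameter $z$ (so $v(z)\ge 1$), the formal expansion together with the algebraic identity $1-z+z^2/4=(1-z/2)^2$ produces
\[
x(P(z))-\alpha \;=\; z^{-2}-z^{-1}+\tfrac14+O(z)+O(4) \;=\; (z^{-1}-\tfrac12)^2 + (\text{lower order}),
\]
and a valuation comparison in the subcases $v(z)\ge e$ and $v(z)<e$ shows the error, after division by $(z^{-1}-\tfrac12)^2$, always has valuation $\ge 2e$, i.e., lies in $4\cO_{K_v}$. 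Thus $x(P)-\alpha$ is a perfect square times an element of $1+4\cO_{K_v}$, and Lemma~\ref{lemma: unramified condition at 2} again gives (b).

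Combining (a) and (b) yields $\im(\d_{K_v})\subseteq S$, and equality follows from the cardinality match. The ``in particular'' statement on lower niceness is immediate from $M_{1,v}\subseteq S=\im(\d_{K_v})$. The main obstacle I anticipate is the valuation bookkeeping in the $P\in E_1(K_v)$ subcase of property (b): one must verify, uniformly in $v(z)\ge 1$, that the error in the formal-group expansion, after division by $(z^{-1}-\tfrac12)^2$, lies in $4\cO_{K_v}$, and the identity $1-z+z^2/4=(1-z/2)^2$ is the key algebraic maneuver that makes this clean argument possible.
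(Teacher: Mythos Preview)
Your overall strategy---cardinality matching plus the containment $\im(\d_{K_v})\subseteq S$, reduced to showing the first component lies in $\{[1],[\boxtimes]\}$---is exactly the paper's. But your direct attack on property~(b) for $P\in E_1(K_v)$ has a genuine gap. The identity $x(P(z))-\alpha=(z^{-1}-\tfrac12)^2+\varepsilon$ with $v(\varepsilon)\ge\min(v(z),2e)$ is fine; the problem is your claim that $\varepsilon/(z^{-1}-\tfrac12)^2\in 4\cO_{K_v}$ holds uniformly. When $v(z)=e$ one has $v(z^{-1}-\tfrac12)=v(2-z)-2e$, which can be arbitrarily large (and equals $+\infty$ at $z=2\in(\pi)$). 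For instance with $e=1$ and $z=6$ one gets $v((z^{-1}-\tfrac12)^2)=0$ while $v(\varepsilon)\ge 1$, so the quotient is only guaranteed to lie in $2\cO_{K_v}$, not $4\cO_{K_v}$. No amount of valuation bookkeeping in the two subcases $v(z)\ge e$ and $v(z)<e$ can fix this, because the ``main term'' $(z^{-1}-\tfrac12)^2$ genuinely collapses at $z=2$.

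The paper sidesteps this by computing the \emph{other} two components instead: modulo the square $z^{-4}$ one has $(x(P)-\beta)(x(P)-\gamma)=s^2-(\beta+\gamma)sz^2+\beta\gamma z^4$ with $s=1-z+O(z^2)\in\cO_{K_v}^\times$, and since $\beta+\gamma,\,\beta\gamma\in 4\cO_{K_v}$ this lies in $s^2(1+4\pi\cO_{K_v})$, hence is a square. The norm relation then forces the first component to equal $[1]$ (not merely to be unramified). Because $s$ is always a unit, no cancellation ever occurs. A separate minor point: your assertion that the $2$-torsion maps into $M_{1,v}$ is also incorrect---for example $\d_{K_v}((\alpha,0))=([1],[\alpha],[\alpha])$ and $[\alpha]=[-1]\cdot[-\alpha]$ with $[-\alpha]\in\{[1],[\boxtimes]\}$, but $[-1]$ is typically ramified---though it does lie in $S$, which is all you actually need.
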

\begin{proof}
We use the same notation as in the proof of Theorem \ref{theorem: good reduction}. 
Since $E$ is upper nice at $v$, by \cite[p. 717]{BK77} the image of $\d_{K_v}$ is contained in
\begin{equation*}
\{ ([a], [b], [ab]) : a, b \in \cO_{K_v}^\times \}.
\end{equation*}
By Lemma \ref{lemma: selmer size at 2} we have $|\im (\d_{K_v})|=2^{d+2}$.
Since $|\sel {\cO_{K_v}}|=2^{d+1}$ and $(1+4\cO_{K_v})/{(\cO_{K_v}^\times)^2}=\{ [1], [\boxtimes] \}$, by counting argument it suffices to show that the first component of $\d_{K_v}([P])$ for any $P \in E(K_v)$ is contained in $1+4\cO_{K_v}$ modulo squares.
Consider the exact sequence
\begin{equation*}
\xymatrix{
E_1(K_v)/{2E_1(K_v)} \ar[r] & E(K_v)/{2E(K_v)} \ar[r]  & \wt{E}(k)/{2\wt{E}(k)} \ar[r]  & 0.\\
}
\end{equation*}
Since $|E(K_v)[2]|=4$ and $|E_1(K_v)[2]|=2$, we have $|\wt{E}(k)[2]|=2$. Since $\wt{E}(k)$ is finite, $|\wt{E}(k)/{2\wt{E}(k)}|=2$ and hence $E(K_v)/{2E(K_v)}$ is generated by $E_1(K_v)/{2E_1(K_v)}$ and $[Q]$ for some $Q \in E(K_v)$ with $\wt{Q} \not\in 2\wt{E}(k)$. 

First, since $\wt{Q} \neq \wt{\cO}$ the $x$-coordinate $x(Q)$ belongs to $\cO_{K_v}$. Thus, we have 
\begin{equation*}
x(Q)-\a \equiv 1/4(1+4a'_2+4x(Q)) \equiv 1+4u \text{ (modulo squares)}.
\end{equation*}

Next, let $P\in E_1(K_v)$. As on \cite[p. 720]{BK77} the second and third components of $\d_{K_v}(P)$ are 
\begin{equation*}
x(P)-\b \equiv s-\b z^2 ~\text{(modulo squares)} \qa x(P)-\g \equiv s-\g z^2 \text{ (modulo squares)}
\end{equation*}
for some $s=1-z+O(z^2) \in \cO_{K_v}^\times$ and $z\in (\pi)$. 
Since $\b+\g=-(a_2'+1/4)-\a=-4a_4'+O(16) \in O(4)$ and $\b\g \in O(4)$, we have
\begin{equation*}
(x(P)-\b)(x(P)-\g) \equiv s^2-(\b+\g)z^2s+\b\g z^4 \equiv  s^2 \equiv 1 \text{ (modulo squares)}.
\end{equation*}
Thus, the first component of $\d_{K_v}([P])$ is $[1]$. This proves the first assertion.

Lastly, by taking $a=1$ or $a=\boxtimes \in 1+4\cO_{K_v}$ we get $M_{1, v} \subset \im (\d_{K_v})$.
Thus, $E$ is lower nice at $v$. 
\end{proof}


For an extension $L_w/K_v$, recall the map $\text{Nm}$ defined in Lemma \ref{lemma : norm relation at 2}

\begin{lemma}\label{lemma: ordinary non-split}
Suppose that $E$ has ordinary reduction at $v$ and $L_w=K_v(\sqrt{\D})$ is an unramified quadratic extension of $K_v$ so that $\phi_v : L_v \simeq K_v \times L_w$. Then we have
\begin{equation*}
\im (\d_{K_v}) = \{ ([1], [a]), ([\boxtimes], [a \boxtimes']) : [a] \in \ker (\text{Nm}) \}.
\end{equation*}
In particular, $E$ is lower nice at $v$.
\end{lemma}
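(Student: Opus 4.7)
The plan is to establish the equality $\im(\d_{K_v}) = S$, where $S := \{([1],[a]),\,([\boxtimes],[a\boxtimes']) : [a] \in \ker(\text{Nm})\}$, by proving the inclusion $\im(\d_{K_v}) \subset S$ and matching cardinalities. Lower niceness at $v$ (namely $M_{1,v} \subset \im(\d_{K_v})$) will then be immediate, since $M_{1,v} = \{([1],[1]),\,([\boxtimes],[\boxtimes'])\}$ is the subset of $S$ obtained by taking $[a] = [1]$. For the sizes: since $L_w/K_v$ is unramified quadratic, the norm $\text{Nm} : \sel{\cO_{L_w}} \to \sel{\cO_{K_v}}$ is surjective, giving $|\ker(\text{Nm})| = 2^d$ and hence $|S| = 2^{d+1}$; on the other hand, Lemma~\ref{lemma: selmer size at 2} together with $|E(K_v)[2]| = 2$ yields $|\im(\d_{K_v})| = |M_{1,v}| \cdot 2^d = 2^{d+1}$, matching $|S|$.

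For the inclusion, upper niceness at $v$ (the first part of Theorem~\ref{theorem: good reduction}) gives $\im(\d_{K_v}) \subset M_{2,v}$, so every element of the image has the form $([\text{Nm}(\alpha_w)],[\alpha_w])$ with $\alpha_w \in \cO_{L_w}^\times$. Combined with the identity $y(P)^2 = (x(P) - \alpha) \cdot \text{Nm}_{L_w/K_v}(x(P) - \beta)$ and with $\text{Nm}([\boxtimes']) = [\boxtimes]$ (Lemma~\ref{lemma : norm relation at 2}), it suffices to prove that $[x(P) - \alpha] \in \{[1], [\boxtimes]\}$ for every $P \in E(K_v)$; the second coordinate will then be forced into $\ker(\text{Nm}) \cup \boxtimes' \cdot \ker(\text{Nm})$, producing an element of $S$. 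I will verify the first-coordinate claim using the normalized model $y^2 + xy = x^3 + a_2' x^2 + a_4' x + a_6'$ from the proof of Theorem~\ref{theorem: good reduction}, where $\alpha = -1/4 - a_2' + 4 a_4' + O(16) \in K_v$ and $\beta, \gamma \in O(2) \subset \cO_{L_w}$. If $x(P) \in \cO_{K_v}$ (i.e., $P \notin E_1(K_v)$), then a direct rearrangement yields $x(P) - \alpha = \tfrac{1}{4}(1 + 4R)$ for some $R \in \cO_{K_v}$, so $[x(P) - \alpha] = [1 + 4R] \in \{[1], [\boxtimes]\}$. If $P \in E_1(K_v)$, then the formal-group expansion of \cite[Ch.~VII, Prop.~2.2]{Si09} gives $x(P) - \beta \equiv s - \beta z^2$ and $x(P) - \gamma \equiv s - \gamma z^2$ modulo squares in $L_w$, with $s = 1 - z + O(z^2) \in \cO_{K_v}^\times$; since $\beta + \gamma,\,\beta\gamma \in 4\cO_{K_v}$ by Vieta's formulas, the product $(x(P) - \beta)(x(P) - \gamma) = s^2 - (\beta+\gamma) z^2 s + \beta\gamma z^4$ is congruent to $s^2$ modulo squares in $K_v$, and therefore $[x(P) - \alpha] = [1]$.

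The main technical point to pin down is the congruence $s^2 - (\beta+\gamma) z^2 s + \beta\gamma z^4 \equiv s^2 \pmod{(K_v^\times)^2}$ in the formal-group case: the perturbation lies in $s^2 \cdot (1 + 4\fm^2)$, and $1 + \pi^N \cO_{K_v}$ consists of squares for $N \geq 2e+1$ by Hensel's lemma. Apart from this standard check, the argument parallels the calculation in the proof of Lemma~\ref{lemma: ordinary split} for the split case; the only genuine difference is that $\beta, \gamma$ are Galois-conjugate in $L_w$ rather than both in $K_v$, but since their symmetric functions still lie in $\cO_{K_v}$ with the required valuations, the argument transfers cleanly.
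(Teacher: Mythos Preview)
Your proof is correct and follows essentially the same approach as the paper: you establish the inclusion $\im(\d_{K_v}) \subset S$ by showing the first coordinate lies in $\{[1],[\boxtimes]\}$ (splitting into the $E_1$ and non-$E_1$ cases exactly as in Lemma~\ref{lemma: ordinary split}), then conclude equality by the counting argument using $|\ker(\text{Nm})|=2^d$. The paper's proof is terser, simply invoking ``As in Lemma~\ref{lemma: ordinary split}'' for the first-coordinate computation, but the substance is identical.
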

\begin{proof}
As in Lemma \ref{lemma: ordinary split}, if $P \in E_1(K_v)$ then 
the norm of the second component of $\d_{K_v}([P])$ must be a square. Thus,  the first component of $\d_{K_v}([P])$ is $[1]$. 
Also, if $Q \in E(K_v) \sm E_1(K_v)$ then the first component of $\d_{K_v}([Q])$
is of the form $1+4u$ with $u\in \cO_{K_v}$. Thus, we have
\begin{equation*}
\im (\d_{K_v}) \subset \{ ([1], [a]), ([\boxtimes], [ax] ) : [a] \in \ker(\text{Nm}) \}
\end{equation*}
for some $x \in \cO_{L_w}^\times$ such that $\text{Nm}([x])=[\boxtimes]$. 
By Lemma \ref{lemma : norm relation at 2}, $\text{Nm}([\boxtimes'])=[\boxtimes]$ and hence we can take $x = \boxtimes'$. Since $L_w$ is unramified, $|\ker (\text{Nm})|=2^d$. Thus, by counting argument we have the equality, which proves the first assertion. By taking $a=1$, we prove that $E$ is lower nice at $v$.
\end{proof}

\begin{lemma}\label{lemma: ordinary non-split ramified}
Suppose that $E$ has ordinary reduction at $v$ and $L_w=K_v(\sqrt{\D})$ is a ramified quadratic extension of $K_v$ so that $L_v \simeq K_v \times L_w$. If $ ~[\boxtimes] \not\in \text{im(Nm)}$ then we have
\begin{equation*}
\im (\d_{K_v})=\{ ([1], [a]) : [a] \in \ker(\text{Nm}) \}.
\end{equation*}
Otherwise, we have
\begin{equation*}
\im (\d_{K_v}) \subset \{ ([1], [a]), ([\boxtimes], [ax]) : [a] \in \ker(\text{Nm}) \},
\end{equation*}
where $x$ is taken so that $\text{Nm}([x])=[\boxtimes]$. 
In both cases, $E$ is lower nice at $v$.
\end{lemma}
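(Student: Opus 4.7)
The plan is to mirror the argument of Lemma~\ref{lemma: ordinary non-split}, adjusting for the fact that $L_w/K_v$ is now ramified rather than unramified. First I would establish the inclusion
\begin{equation*}
\im(\d_{K_v}) \subset \{([1], [a]), ([\boxtimes], [ax]) : [a] \in \ker(\text{Nm})\}
\end{equation*}
by the same dichotomy used there. For $P \in E_1(K_v)$, the Hensel expansion in the good ordinary case yields $\b+\g, \b\g \in O(4)$, so the formal-group computation gives $(x(P)-\b)(x(P)-\g) \equiv 1 \pmod{(K_v^\times)^2}$; combining with $y(P)^2 = F(x(P))$ forces $x(P)-\a \equiv 1$ modulo squares, placing the first component of $\d_{K_v}([P])$ at $[1]$ and (by the norm condition) the second in $\ker(\text{Nm})$. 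For $Q \in E(K_v)\setminus E_1(K_v)$, the computation $x(Q)-\a \equiv 1+4u$ modulo squares from Lemma~\ref{lemma: ordinary split} places the first component in $\{[1], [\boxtimes]\}$.

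Next I would split by whether $[\boxtimes] \in \im(\text{Nm})$. Lemma~\ref{lemma: selmer size at 2} gives $|\im(\d_{K_v})| = 2^{d+1}$. Meanwhile, local class field theory applied to the totally ramified quadratic extension $L_w/K_v$ yields $[\cO_{K_v}^\times : N(\cO_{L_w}^\times)] = 2$, and together with the inclusion $(\cO_{K_v}^\times)^2 \subset N(\cO_{L_w}^\times)$ (since $a^2 = N(a)$ for $a \in K_v^\times$) this forces $|\im(\text{Nm})|=2^d$ and $|\ker(\text{Nm})|=2^{d+1}$. When $[\boxtimes] \notin \im(\text{Nm})$ the norm condition forbids the $[\boxtimes]$-coset, so the inclusion collapses to $\im(\d_{K_v}) \subset \{([1],[a]) : [a]\in\ker(\text{Nm})\}$, and the cardinalities $2^{d+1}$ match on both sides, yielding equality. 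When $[\boxtimes] \in \im(\text{Nm})$ only inclusion is asserted.

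For lower niceness it suffices to verify $M_{1,v} = \{([1],[1]), ([1],[\boxtimes'])\} \subset \im(\d_{K_v})$. The element $([1],[1])$ arises from the origin, and by Lemma~\ref{lemma : norm relation at 2} we have $\text{Nm}([\boxtimes']) = [1]$ because $L_w/K_v$ is ramified, so $[\boxtimes']\in\ker(\text{Nm})$. In the first case the equality above gives $([1],[\boxtimes']) \in \im(\d_{K_v})$ directly. In the second case I would exhibit it by evaluating $\d_{K_v}$ on the nontrivial $2$-torsion $P_\a = (\a, 0)$: the standard $2$-descent formula produces $\d_{K_v}([P_\a]) = ([g(\a)], [\a-\theta_w])$ with $g(x) = F(x)/(x-\a)$, and using $v_{K_v}(\a) = -2e$ together with $\b+\g, \b\g \in O(4)$ one checks $g(\a) \equiv \a^2$ modulo squares, while $1 - \theta_w/\a \in 1+\fm_{L_w}^{6e} \subset (L_w^\times)^2$ for $e \geq 1$ gives $[\a-\theta_w] = [\a]$ in $L_w^\times/(L_w^\times)^2$; thus $\d_{K_v}([P_\a]) = ([1], [\a])$.

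The main obstacle I anticipate is the final identification $[\a]_{L_w} = [\boxtimes']$ in the second case, i.e., verifying that $L_w(\sqrt{\a})/L_w$ is the unramified quadratic extension. Writing $\a = \pi_{L_w}^{-4e}\cdot u$ for a unit $u \in \cO_{L_w}^\times$, one must show via Lemma~\ref{lemma: unramified condition at 2} that $u$ lies in the unramified coset of $1+4\cO_{L_w}$ modulo squares—a delicate computation hinging on the precise residue class of $\a \in K_v$ inside the ramified extension $L_w$ and on the fact, recorded above, that in case 2 the remaining half-coset $([\boxtimes], [ax])$ could a priori absorb the $2$-torsion image, so the analysis must rule this out.
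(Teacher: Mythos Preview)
Your argument for the inclusion and for the equality in the case $[\boxtimes]\notin\im(\text{Nm})$ is correct and matches the paper's. The counting $|\ker(\text{Nm})|=2^{d+1}$ is right, and using the established equality to get $([1],[\boxtimes'])\in\im(\d_{K_v})$ in that case is a clean shortcut.

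The genuine gap is exactly the one you flag: in the case $[\boxtimes]\in\im(\text{Nm})$, the $2$-torsion approach does not close. Your computation $\d_{K_v}([P_\a])=([1],[\a]_{L_w})$ is correct, but the identification $[\a]_{L_w}=[\boxtimes']$ can fail. Indeed $-4\a\in 1+4\cO_{K_v}$, so $[\a]_{K_v}\in\{[-1],[-\boxtimes]\}$; if for instance $a_2'\equiv 0\pmod{\pi}$ one finds $-4\a$ a square in $K_v$, hence $[\a]_{K_v}=[-1]$, and then whenever $-1$ becomes a square in the particular ramified extension $L_w$ (e.g.\ $L_w=K_v(\sqrt{-1})$) one gets $[\a]_{L_w}=[1]$, not $[\boxtimes']$. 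So $P_\a$ need not witness lower niceness.

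The paper sidesteps this entirely with a uniform construction that works in \emph{both} cases: take $u\in\cO_{K_v}$ with $1+4u$ a nonsquare in $K_v$, set $z=-4u$, and use the formal-group point $P(z)\in E_1(K_v)$. The same expansion you already invoked gives
\[
x(P(z))-\b \equiv 1+4u \pmod{(L_w^\times)^2}.
\]
Now $K_v(\sqrt{1+4u})/K_v$ is the unramified quadratic extension by Lemma~\ref{lemma: unramified condition at 2}, and since $L_w/K_v$ is ramified these are linearly disjoint; hence $L_w(\sqrt{1+4u})/L_w$ is nontrivial unramified, i.e.\ $[1+4u]_{L_w}=[\boxtimes']$. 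Combined with the first component being $[1]$ (which you already established for $E_1(K_v)$), this gives $\d_{K_v}([P(z)])=([1],[\boxtimes'])$ directly, with no case split and no appeal to $P_\a$.
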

\begin{proof}
As in Lemma \ref{lemma: ordinary non-split ramified}, we have
\begin{equation*}
\im (\d_{K_v}) \subset \{ ([1], [a]), ([\boxtimes], [ax]) : [a] \in \ker(\text{Nm}) \},
\end{equation*}
for some $x \in \cO_{L_w}^\times$ such that $\text{Nm}([x])=[\boxtimes]$. Thus, if $[\boxtimes] \not\in \text{im(Nm)}$ then such $x$ does not exist. Since $L_w/{K_v}$ is ramified, we have $|\ker(\text{Nm})|=2^{d+1}$ and hence $\im (\d_{K_v})=\{ ([1], [a]) : [a] \in \ker(\text{Nm}) \}$, as claimed.

To prove that $E$ is lower nice at $v$, it suffices to find a point $P \in E(K_v)$ such that
$\d_{K_v}([P])=([1], [\boxtimes'])$. 
Indeed, we can take $z=-4u$ for some $u \in \cO_{K_v}$ such that $1+4u$ is not a square, and $P=P(z)\in E_1(K_v)$. Then we have
\begin{equation*}
x(P(z))-\b=z^{-2}(1-z-(a_2'+\b)z^2+O(z^3)) \equiv 1+4u \equiv \boxtimes' \text{ (modulo squares)}.
\end{equation*}
Thus, we have $\d_{K_v}([P(z)])=([1], [\boxtimes'])$, as desired.
\end{proof}

\ms
\subsection{Multiplicative reduction}
In this subsection, we consider the case of multiplicative reduction. 
\begin{theorem}\label{theorem : multiplicative reduction}
Suppose that $E$ has multiplicative reduction at $v$. If $v(D)$ is odd, then $E$ is nice at $v$.
\end{theorem}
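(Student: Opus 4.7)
My plan is to use the parity of $v(D)$ to restrict the possible factorizations of $F(x)$ over $K_v$, and then to treat the remaining non-trivial case by an argument modeled on Lemmas \ref{lemma: ordinary split}--\ref{lemma: ordinary non-split ramified}.

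If $F(x)$ splits into three linear factors over $K_v$, then $D=\prod_{i<j}(\alpha_i-\alpha_j)^2$ is a square and $v(D)$ is even; if $F(x)=(x-\alpha)g(x)$ with $g(x)$ irreducible quadratic and the quadratic extension $L_w:=K_v[x]/(g(x))$ is unramified, then $\textnormal{disc}(L_w/K_v)$ is a unit and $v(D)=v(g(\alpha)^2\cdot\textnormal{disc}(g))$ is even; and similarly if $F(x)$ is irreducible cubic producing an unramified cubic $L_v/K_v$. Thus $v(D)$ odd forces one of: \textbf{(a)} $F(x)$ is irreducible and $L_v/K_v$ is a ramified cubic field, whence $|E(K_v)[2]|=1$ and Proposition \ref{proposition: cubic extension} applies; or \textbf{(b)} $F(x)=(x-\alpha)g(x)$ with $\phi_v:L_v\simeq K_v\times L_w$ and $L_w/K_v$ a ramified quadratic extension, whence $|E(K_v)[2]|=2$.

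In Case (b), Lemma \ref{lemma: selmer size at 2} gives $|M_{1,v}|=2$, $|\im(\d_{K_v})|=2^{d+1}$, and $|M_{2,v}|=2^{2d+1}$, so it suffices to verify the two inclusions $M_{1,v}\subset \im(\d_{K_v})\subset M_{2,v}$. For upper niceness, given $P\in E(K_v)$ the identity $y(P)^2=(x(P)-\alpha)\cdot N_{L_w/K_v}(x(P)-\beta)$ combined with $v\circ N_{L_w/K_v}=w$ (which holds because $L_w/K_v$ is totally ramified of degree two) shows the sum $v(x(P)-\alpha)+w(x(P)-\beta)$ is even. To separate the two summands, I would use the fact that $v(\Delta^{\min})\equiv v(D)\pmod 4$ is odd, so $E$ has Kodaira type $I_n$ with $n$ odd; the image of $E(K_v)/2E(K_v)$ in the component group $E(K_v)/E_0(K_v)$ then vanishes, reducing the verification to $P\in E_0(K_v)$ and further to $P\in E_1(K_v)$ via the standard reduction-theoretic commutative diagram used in the proof of Theorem \ref{theorem: good reduction}. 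On $E_1(K_v)$, an explicit formal-group expansion (\`a la \cite[Ch.~VII, Prop.~2.2]{Si09}) yields even valuation for each component separately. For lower niceness, I would produce a point in $E_1(K_v)$ whose Kummer image is $([1],[\boxtimes'])$ by selecting a formal parameter with prescribed valuation and invoking Hensel's lemma, in the same spirit as the construction at the end of Lemma \ref{lemma: ordinary non-split ramified}.

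The main obstacle, I expect, will be carrying out the upper-niceness verification uniformly in the split and non-split multiplicative subcases without the hypothesis that $K_v/\Q_2$ be unramified (the setting of the classical Brumer--Kramer analysis). In the non-split case the Tate parameterization involves a quadratic twist by the unramified quadratic character, and this has to be reconciled with the conclusion of the first step that the field $L_w$ containing the non-rational $2$-torsion is \emph{ramified} over $K_v$; carefully tracking the compatibilities between the chosen Weierstrass model, the algebra $L_v$, and the Tate uniformization is where I anticipate the bulk of the technical work to lie.
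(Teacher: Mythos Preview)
Your outline is viable, but the paper takes a much shorter route. Rather than reducing to $E_1(K_v)$ and using formal-group expansions, the paper directly invokes Propositions~4.1 and~4.3 of \cite{BK77}, which compute $\im(\d_{K_v})$ explicitly for multiplicative reduction via the Tate parametrization:
\[
\im(\d_{K_v})=\begin{cases}
\{([1],[z]):[z]\in\cS\}&\text{(split)},\\
\{([1],[z]):[z]\in\ker(\text{Nm})\}&\text{(non-split)},
\end{cases}
\]
where $\cS=\im\bigl(\sel{\cO_{K_v}}\hookrightarrow\sel{\cO_{L_w}}\bigr)$. A two-line counting argument then shows $\cS=\ker(\text{Nm})$ when $L_w/K_v$ is ramified: the inclusion $\cS\subset\ker(\text{Nm})$ is clear since $N_{L_w/K_v}(a)=a^2$ for $a\in K_v^\times$, and both groups have order $2^{d+1}$. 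Niceness follows by inspection of $M_{1,v}$ and $M_{2,v}$ in Case~2 of Section~\ref{subsection: finite primes}. Your concern in the final paragraph is therefore misplaced: the Brumer--Kramer computations for multiplicative reduction rest on the Tate uniformization, which is valid over any complete discretely valued field; the ``unramified over $\Q_2$'' hypothesis in \cite{BK77} enters only in their treatment of \emph{good} reduction. What your approach buys is self-containment (you would re-derive the Kummer image rather than cite it), at the cost of having to analyze the valuations of the roots $\a,\b$ carefully in the multiplicative setting, where they behave rather differently than in the good ordinary case treated in Theorem~\ref{theorem: good reduction}.

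Two smaller points. Your case~(a) is vacuous: multiplicative reduction forces $|E(K_v)[2]|\ge2$, since the image of $-1$ under the Tate parametrization is a $K_v$-rational $2$-torsion point in both the split and non-split cases (a quadratic twist acts trivially on $2$-torsion). And the commutative diagram from Theorem~\ref{theorem: good reduction} is not the right tool for passing from $E_0$ to $E_1$: that diagram enlarges the residue field to kill $\wt{E}(k)/2\wt{E}(k)$, which presupposes good reduction. Here one should instead note that $E_0(K_v)/E_1(K_v)$, being isomorphic to the nonsingular $k$-points of $\wt{E}$, already has odd order (it is $k^\times$ in the split case and the kernel of the norm from the quadratic extension of $k$ in the non-split case), so $E_1(K_v)/2E_1(K_v)\to E_0(K_v)/2E_0(K_v)$ is surjective without any base change.
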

\begin{proof}
To prove the theorem, we need a description of the image of $\d_{K_v}$. 
By our assumption, $L_w=K_v(\sqrt{D})$ is a ramified quadratic extension and so we use the same notation as in Lemma \ref{lemma: ordinary non-split ramified}.
We claim that
\begin{equation*}
\im (\d_{K_v})=\{ ([1], [a]) : [a] \in \ker(\text{Nm}) \}.
\end{equation*}
Indeed, let $\cS:=\im(\sel {\cO_{K_v}} \inj \sel {\cO_{L_w}} )$. Then by Propositions 4.1 and the proof for Case 1 of Proposition 4.3 in \cite{BK77}, we can deduce
\begin{equation*}
\im (\d_{K_v})=\begin{cases}
\{([1], [z]) : [z] \in \cS \} & \text{ if $E$ has split multiplicative reduction at $v$},\\
\{ ([1], [z]) : [z] \in \ker(\text{Nm}) \} & \text{ otherwise}.
\end{cases}
\end{equation*}
Thus, it suffices to show that $\cS=\ker(\text{Nm})$ as subgroups of $\sel {L_w}$. Let $[\a] \in \cS$. Since $L_w/{K_v}$ is quadratic, we have $[\a] \in \ker(\text{Nm})$, i.e., $\cS \subset \ker(\text{Nm})$. Since $L_w$ is a ramified quadratic extension of $K_v$, we have $|\ker (\text{Nm})|=2^{d+1}$, which is equal to $|\cS|$. Therefore $\cS=\ker(\text{Nm})$ and hence the claim follows.

By the description of the image of $\d_{K_v}$ and Case 2 in Section \ref{section: local conditions}, it is easy to see that $E$ is nice at $v$, as desired.
\end{proof}

\begin{remark}
By \cite[Prop. 2(a) and Prop. 7]{Kr81}, the local condition $\im (\d_{K_v})$ does not change if we twist $E$ by an unramified quadratic extension under our assumption $v(D)$ is odd and $E$ has multiplicative reduction.
More generally, the same is true under the assumption that the local Tamagawa number is odd by the proof of case (3) of \cite[Lem. 5.9]{KL19}.
\end{remark}

\ms
\section{Examples}\label{section: examples}
Throughout this section, we choose a real quadratic field $K$ so that
\begin{equation*}
C_K = \{ 1\} \qa C_K^+ \simeq \zmod 2.
\end{equation*}
Also, we take $K$ so that it is ramified (resp. unramified) at $2$ in the case of good (resp. multiplicative) reduction. Furthermore, we take $F(x) \in \Q[x]$ so that the discriminant of $F$ is negative. Then we have $C_L^\infty=C_L^+$.
In the tables below, we use the following notation.
\begin{itemize}[--]
\item
$\D$ is the minimal discriminant of $E/\Q$.
\item
$m$ is the number of prime divisors of $\D$ inert in $K$.
\item
$n=\dim_{\F_2} C_L^\infty[2] - \dim_{\F_2} C_K^+[2]=\dim_{\F_2} C_L^+[2]-1$.
\item
$[n_1, \dots, n_r]$ is the group isomorphic to $\zmod {n_1} \times \cdots \times \zmod {n_r}$.

\item
$r_1$ (resp. $r_2$) is the rank of $E(\Q)$ (resp. $E^K(\Q)$), where $E^K$ is the quadratic twist of $E$ by $K$. It is often undetermined by the $2$-Selmer rank of $E/\Q$. In that case, we write its possible values in the table. 
\item
$s(E)$ is the $2$-Selmer rank of $E/K$, i.e., $s(E)=\dim_{\F_2} \Sel_2(E/K)$. 
\item
We say it is of \textit{type} $P$ (resp. $R$) if $s(E) \not\equiv n \pmod 2$ (resp. if $s(E) \equiv n \pmod 2$ and $r_1+r_2>n$).
\end{itemize}

In SAGE \cite{Sa20}, Simon's two descent code is used for computing the $2$-Selmer rank and the Mordell--Weil rank. Note that our computation of the $2$-Selmer rank is indirect because SAGE cannot compute most of $s(E)$ in the table (For instance, the computation of the $2$-Selmer rank for the case $a_6=37$ (good ordinary) already took more than a week. In general, the computation becomes more difficult if $a_6$ is getting large.)
Instead, we verify our computation as follows. Since $n \leq s(E) \leq n+2$, if it is of type $P$, in which case the $2$-Selmer rank is determined by the parity, we have $s(E)=n+1$. Also, since $s(E) \geq \text{rank of }E(K)$, which is $r_1+r_2$, if it is of type $R$, in which case the $2$-Selmer rank is determined by the rank, then we have $s(E)=n+2$. 

Although Simon's two descent code for elliptic curves over $\Q$ is very fast, that for elliptic curves over $K$ is very slow. Thus, our theorem tells a way to enhance the algorithm for general number fields under suitable assumptions on $E$ because $M_2$ is much smaller than $L(S, 2)$.

\subsection{Good reduction}
Let $K=\Q(\sqrt{3})$. First, we start with an elliptic curve $E/K$ given in the form
\begin{equation*}
y^2+a_1xy+a_3y=x^3+a_2x^2+a_4x+a_6 \qa a_i \in \Z.
\end{equation*}
Suppose that $E$ has good reduction at even primes. Then we have the following.
\begin{lemma} 
\label{lemma : 5.1} 
Suppose that $\D$ is squarefree and not divisible by $3$. Then $m$ has the same parity as $\dim_{\F_2} \Sel_2(E/K)$.
\end{lemma}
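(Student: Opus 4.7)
The plan is to first verify that $E$ is nice at every finite prime of $K$ so that Theorem~\ref{theorem: main theorem nice implies bounds} forces $s(E) \in \{n, n+1, n+2\}$, and then to pin down the parity of $s(E)$ by computing the global root number $w(E/K)$ and invoking the $2$-parity conjecture.

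For niceness, let $v_2$ denote the unique (ramified) prime of $K$ above $2$, so $v_2(2) = 2$. At $v_2$, the curve has good reduction by hypothesis. In the ordinary case, Theorem~\ref{thm}(1) applies directly. In the supersingular case, $v_2(2) = 2$ is coprime to $3$, and $a_1 \in \Z$ together with $e_{v_2/2} = 2$ forces $v_2(a_1)$ to be even; Lemma~\ref{lemma: criterion for ord vs ss} gives $v_2(a_1) > 0$, whence $v_2(a_1) \geq 2$ and $3 v_2(a_1) \geq 2 v_2(2)$, so Theorem~\ref{thm}(2) applies. At an odd prime $v$ of good reduction, $v(D) = 0$ and Proposition~\ref{proposition: valuation is at most 1} applies. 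At an odd prime $v \mid p$ with $p \mid \Delta$, the squarefreeness of $\Delta$ forces $v(\Delta^{\min}) = 1$, so $E$ has multiplicative reduction at $v$; a direct computation (completing the square on the given Weierstrass form) yields $D = 2^8 \Delta$, hence $v(D) = 1$ is odd and Theorem~\ref{theorem : multiplicative reduction} applies. Thus Theorem~\ref{theorem: main theorem nice implies bounds} gives $n \leq s(E) \leq n+2$.

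Next I would compute $w(E/K) = \prod_v w_v$. The two real places each contribute $-1$, giving product $+1$, and places of good reduction contribute $+1$. For an odd prime $p \mid \Delta$: if $p$ splits in $K$ as $v_1 v_2$, the two local root numbers are equal, giving product $+1$. If $p$ is inert with unique prime $v$, then $K_v/\Q_p$ is unramified with residue field $\F_{p^2}$, and since $2(p-1) \mid p^2 - 1$ every element of $\F_p^\times$ is a square in $\F_{p^2}^\times$; by Hensel's lemma every unit in $\Z_p^\times$ is a square in $\cO_{K_v}^\times$. In particular $-c_6$, which is a unit at multiplicative $p$, becomes a square in $K_v$, so $E/K_v$ is \textit{split} multiplicative and $w_v = -1$. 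Therefore $w(E/K) = (-1)^m$.

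Finally, the $2$-parity conjecture, known for semistable elliptic curves over totally real fields by the work of T.\ and V.\ Dokchitser, yields $(-1)^{s(E)} = w(E/K) = (-1)^m$, which is the desired congruence. The main obstacle is the niceness verification at the ramified even prime in the supersingular case, where one must combine integrality of $a_1$, the ramification index $e_{v_2/2} = 2$, and Lemma~\ref{lemma: criterion for ord vs ss} in order to trigger Theorem~\ref{thm}(2); the subsequent root number bookkeeping and the appeal to a known parity result are then routine.
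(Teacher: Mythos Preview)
Your root-number computation and appeal to the Dokchitsers' parity theorem are correct and are exactly what the paper does: it cites \cite{DD11} for $(-1)^{\mathrm{rk}_2(E/K)}=\ve(E/K)$, passes to $\dim_{\F_2}\Sel_2(E/K)$ via the Cassels--Tate pairing, and then shows $\ve(E/K)=(-1)^m$ by the same place-by-place analysis (two real places cancel; split rational primes contribute in pairs; an inert prime gives split multiplicative reduction over the unramified quadratic extension, hence local sign $-1$). Your argument for the inert case via the $-c_6$ criterion is a slightly more explicit version of the paper's one-line remark that multiplicative reduction becomes split over the unramified quadratic extension.

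However, your entire first paragraph is superfluous for this lemma. Verifying niceness and invoking Theorem~\ref{theorem: main theorem nice implies bounds} yields only $s(E)\in\{n,n+1,n+2\}$, which places no constraint whatsoever on the parity of $s(E)$. The lemma is a pure parity statement and is proved entirely by the root-number argument; nothing about $M_1$, $M_2$, or semi-narrow class groups enters. You appear to have merged the proof of Lemma~\ref{lemma : 5.1} with the surrounding strategy of \S\ref{section: examples}, where the bound $n\le s(E)\le n+2$ and the parity from Lemma~\ref{lemma : 5.1} are \emph{combined} to pin down $s(E)$ in the tables. For the lemma itself, delete the niceness discussion. (If you do keep it for other purposes, note that you are silently using that the given integral model is minimal at $v_2$; this follows because good reduction at $v_2$ together with $\Delta$ squarefree forces $2\nmid\Delta$.)
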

\begin{proof}
Let $\ve(E/K)$ be the root number of $E/K$ and let $\text{rk}_2(E/K)$ be the $2^\infty$-Selmer rank of $E$, as in \cite{DD11}. 
Then by Corollary 1.6 of \textit{op. cit.} we have $(-1)^{\text{rk}_2(E/K)}= \ve(E/K)$. Note that
\begin{equation*}
\dim_{\F_2} \Sel_2(E/K) - \text{rk}_2(E/K) = \dim_{\F_2} ((\Sha/\Sha_\text{div})[2]),
\end{equation*}
where $\Sha$ is the Shafarevich--Tate group of $E/K$ and $\Sha_\text{div}$ is the divisible subgroup (conjecturally trivial) of $\Sha$. 
This is an even number by the Cassels--Tate pairing (cf. \cite[Ch. X, Th. 4.14]{Si09}). Thus, it suffices to show that $(-1)^m=\ve(E/K)$. 

Since $\D$ is squarefree, $E$ is semistable. Thus, we have $\ve(E/K)=(-1)^{s+t}$, where $s$ is the number of the infinite places of $K$ and $t$ is the number of the primes where $E$ has split multiplicative reduction (cf. \cite[Sec. 1.2]{DD11}). Thus, it suffices to prove that $m \equiv s+t \equiv t \pmod 2$.

Let $v$ be a prime divisor of $\D$, and let $p$ be the prime number lying below $v$. 
Suppose first that $p$ is split in $K$. Then there is another prime $v'$ of $K$ lying above $p$. Since $E$ is defined over $\Q$, if $E/{K_v}$ has split multiplicative reduction then the same is true for $E/{K_{v'}}$. Next, suppose that $p$ is inert in $K$. Again, since $E$ is defined over $\Q$ and $K_v$ is an unramified quadratic extension of $\Q_p$, $E/{K_v}$ has always split multiplicative reduction. Thus, we have $m\equiv t \pmod 2$, as desired.
\end{proof}

Now, we take $a_1=0$ and $a_3=1$. Then $E/K$ has supersingular reduction at any even prime $v$. For simplicity, we further take $a_2=a_6=0$. Then by change of coordinates we have
\begin{equation*}
y^2=x^3+16a_4 x+16=F(x).
\end{equation*}
By SAGE \cite{Sa20} we have the following (good supersingular reduction at even primes).

\footnotesize
\begin{equation*}
\begin{array}{|c|c|c|c|c|c|c|c|c|c|}
\hline
a_4 & \D &  m &  C_L^+ & n & r_1 & r_2  & s(E) & \text{Type}  \\ \hline 
1 & -7\cdot 13 & 1 & [2] & 0  & 1 & 0 & 1 & P\\ \hline
4 & -7\cdot 19 \cdot 31 & 3 & [4, 2] & 1  & 2& 1 & 3 & R\\ \hline
5 & -23\cdot 349 & 0 & [2, 2] & 1 & 2 & 0, 2 & 2 & P \\ \hline
7 & -31 \cdot 709 & 1 & [6] & 0  & 1 & 0 & 1 & P\\ \hline
13 & -5 \cdot 11\cdot 2557 & 1 &  [210, 2] & 1 & 2 &1  &3 & R \\ \hline
14 & -13 \cdot 59 \cdot 229& 0 &  [60, 2] & 1  & 1& 1   & 2 & P\\ \hline
17 & -43 \cdot 71 \cdot 103 & 2 & [20, 2, 2, 2] & 3 & 3 & 1, 3 & 4 & P \\ \hline
19 & -79 \cdot 5557 & 1 &   [16, 4, 2] & 2  & 3 & 0, 2  & 3 & P\\ \hline 
22 & -7 \cdot 13 \cdot 7489 & 1  & [28, 2, 2] & 2   & 2 & 1 & 3 & P \\ \hline
\end{array}
\end{equation*}
\begin{equation*}
\begin{array}{|c|c|c|c|c|c|c|c|c|c|}
\hline
a_4 & \D &  m &  C_L^+ & n & r_1 & r_2  & s(E) & \text{Type}  \\ \hline 
23 & -5 \cdot 7 \cdot 19 \cdot 1171 & 4 & [24, 2] & 1 & 2 & 0, 2 & 2 & P\\ \hline
25 & -7 \cdot 19\cdot 73 \cdot 103 & 3 & [78] & 0 &  1 & 0 & 1 & P\\ \hline
26 & -107 \cdot 10513 & 0 & [10, 2, 2] & 2 & 2 & 2 & 4 & R \\ \hline
31 & -127 \cdot 15013 & 1 & [42] & 0 & 1 & 0 & 1 & P \\ \hline
32 & -7 \cdot 131 \cdot 2287 & 2 & [24, 2, 2] & 2 & 3 & 1, 3 & 4 & R \\ \hline
34 & -139 \cdot 18097 & 1 & [60, 2, 2, 2] & 3 & 3 & 2& 5 & R \\ \hline 
35 & -11 \cdot 13 \cdot 31 \cdot 619 & 2 & [78, 6] & 1 & 2 & 0, 2 & 2 & P \\ \hline
37 & -7 \cdot 151 \cdot 3067 & 3 & [52, 2, 2] & 2 & 2 & 1 & 3 & P\\ \hline
40 & -13 \cdot 163 \cdot 1933 & 1 & [40, 2] & 1 & 2 & 1 & 3 & R \\ \hline
41 & -61 \cdot 167 \cdot 433 & 0 & [10, 2, 2, 2, 2] & 4 & 3 & 3 & 6 & R \\ \hline
44 & -7 \cdot 19 \cdot 179 \cdot 229 & 2 & [44, 2, 2, 2] & 3 & 2 & 2 & 4 & P \\ \hline
\end{array}
\end{equation*}
\normalsize

\begin{remark}
When $a_4=5, 17, 19, 23, 32$ and $35$, we deduce that $r_2=0, 1, 0, 0, 1$ and $0$, respectively. 
\end{remark}

\ms
Next, we take $a_1=1$ and $a_3=a_2=a_4=0$. By direct computation, the discriminant of $E$ is $-a_6(1+432a_6)$. Thus, it has ordinary reduction at any even prime $v$ if $v(a_6) \in 12\Z$. By change of coordinates we have
\begin{equation*}
y^2=x^3+ x^2+64a_6=F(x).
\end{equation*}
By SAGE \cite{Sa20} we have the following (good ordinary reduction at even primes).

\footnotesize
\begin{equation*}
\begin{array}{|c|c|c|c|c|c|c|c|c|}
\hline
a_6 & \D &  m & C_L^+ & n & r_1 & r_2 & s(E) & \text{Type}\\ \hline
1& - 433& 0 &  [4, 2]  & 1  & 2 & 0  & 2 & P\\ \hline
5& -  5 \cdot 2161& 1  &  [14] & 0 & 1& 0 & 1 & P\\ \hline
13&-13 \cdot 41 \cdot 137 & 2 &  [2, 2] & 1  & 2 & 0  & 2 & P \\ \hline
19&-19 \cdot  8209  & 1 &   [2, 2] & 1 & 1 & 2  & 3 & R \\ \hline
29&-11\cdot 17 \cdot 29\cdot 67& 3 & [2] &0 & 1 & 0 & 1 & P\\ \hline
37&-5 \cdot  23 \cdot  37 \cdot  139 &2 & [2] & 0 & 1 & 1   & 2 & R \\ \hline
41&-41 \cdot  17713 &1 &  [370] & 0& 1 & 0   & 1 & P \\ \hline
43&-13 \cdot  43 \cdot 1429 &1 &  [12, 2, 2]  & 2 & 2& 1 & 3 & P\\ \hline
47&-5 \cdot  31 \cdot  47 \cdot  131  & 2 &   [2, 2] &1 & 1 & 1 & 2 & P \\ \hline
53 & -7 \cdot 53 \cdot 3271 & 3 & [16, 2, 2, 2] & 3 & 2 & 3 & 5 & R \\ \hline
55 & -5 \cdot 11 \cdot 23761 & 1 & [4, 2, 2, 2] & 3 & 2 & 3 & 5 & R \\ \hline
65 & - 5 \cdot 13 \cdot 28081 & 1 & [6, 2] & 1 & 2 & 1 & 3 & R \\ \hline
73 & -11 \cdot 47 \cdot 61 \cdot 73 & 0 & [2, 2] & 1 & 1 & 1 & 2 & P\\ \hline
77 & - 5 \cdot 7 \cdot 11 \cdot 6653 & 3 & [2] & 0 & 1 & 0 & 1 & P\\ \hline
79 & -79 \cdot 34129 & 1 & [6, 2, 2] & 2 & 1 & 2 & 3 & P\\ \hline
89 & -89 \cdot 38449 & 1 & [2] & 0 & 1 & 0 & 1 & P\\ \hline
95 & -5 \cdot 7 \cdot 11 \cdot 13 \cdot 19 \cdot 41 & 4 & [2, 2] & 1 &1 & 1 & 2 & P\\ \hline 
101 & -101 \cdot 43633 & 1 & [22, 2] & 1 & 1 & 2 & 3 & R\\ \hline
103 & -103 \cdot 44497 & 1 & [1008, 2, 2] & 2 & 1 & 0 & 3 & P \\ \hline
113 &-113 \cdot 48817 & 1 & [26] & 0 & 1 & 0 & 1 & P \\ \hline
\end{array}
\end{equation*}
\normalsize

\subsection{Multiplicative reduction}
Let $K=\Q(\sqrt{21})$. As above, we take an elliptic curve $E/K$ given in the form
\begin{equation*}
y^2+a_1xy+a_3y=x^3+a_2x^2+a_4 x+a_6 \qa a_i \in \Z.
\end{equation*}
We take $a_1=1$ and $a_2=a_3=a_4=0$, so $F(x)=x^3+x^2+64a_6$. By Tate's algorithm \cite[p. 366]{Si94}, it is easy to see that $E$ has multiplicative reduction at any prime $v$ dividing $(a_6, \D)$. Thus, we take $a_6=2b$. We choose $b$ so that $1+864b$ is squarefree, which guarantees that $E/{K_v}$ has semistable reduction at any prime $v$. We also take $b$ so that $\D$ has odd or zero valuation at all primes, and $\D$ is even and prime to $21$.
Similarly as in Lemma \ref{lemma : 5.1}, we can easily deduce that $m$ has the same parity as $\dim_{\F_2} \Sel_2(E/K)$.

\footnotesize
\begin{equation*}
\begin{array}{|c|c|c|c|c|c|c|c|c|}
\hline
b & \D &  m & C_L^+ & n&  r_1 & r_2 & s(E) & \text{Type} \\ \hline 
1 & -2 \cdot 5 \cdot 173 & 1 & [28, 2] & 1 & 0 & 1 & 1, 3 &  \\ \hline
4 & -2^3 \cdot 3457 & 2 & [6] & 0 & 1 & 1 & 2 & R \\ \hline
5 & -2 \cdot  5 \cdot 29 \cdot 149 & 3  & [6] & 0 & 1 & 0 & 1 & P \\ \hline
11 & -2 \cdot 5 \cdot 11 \cdot 1901 & 3  & [2] & 0 & 1 & 0 & 1 & P \\ \hline
13 & -2 \cdot 13 \cdot 47 \cdot 239 & 3 & [2] & 0 & 1 & 0 & 1 & P \\ \hline

17 & -2 \cdot  17 \cdot 37 \cdot 397 & 2 & [2, 2] & 1 & 1 & 1 & 2 & P \\ \hline
19 & -2 \cdot 19 \cdot 16417 & 2 & [2] & 0 & 0 & 0 & 0, 2 & \\ \hline
20 & -2^3 \cdot 5 \cdot 11 \cdot 1571 & 2 & [26, 2, 2] & 2 & 1 & 1 & 2, 4 & \\ \hline
29 & -2 \cdot 29 \cdot  25057 & 2 & [2, 2] & 1 & 0 & 0 & 2 & P\\ \hline
31 & -2 \cdot 5\cdot 11\cdot 31 \cdot 487 & 3 & [6, 2] & 1 & 0 & 1 & 1, 3 & \\ \hline

43 & -2 \cdot 43 \cdot 53 \cdot 701 & 3 & [12, 2] & 1 & 1 & 0 & 1, 3 & \\ \hline
47 & -2 \cdot 47 \cdot 40609 & 1 & [2] & 0 & 0 & 1 & 1& P \\ \hline
52 & -2^3 \cdot 13 \cdot 179 \cdot 251& 3 & [2] & 0 & 1 & 0 & 1 & P\\ \hline
53 & -2 \cdot 11 \cdot  23 \cdot 53 \cdot 181 & 5 & [2, 2] & 1 & 0 & 1 & 1, 3 & \\ \hline
55 & -2 \cdot 5 \cdot 11 \cdot 47521 & 3 & [30, 2] & 1 & 1 & 2 & 3& R \\ \hline

59 & -2 \cdot 19 \cdot 59 \cdot 2683 & 2 & [2, 2] & 1 & 1 & 1 & 2 & P \\ \hline
61 & -2 \cdot 5 \cdot 61 \cdot 83 \cdot 127 & 2 & [ 2, 2] & 1 & 0, 2 & 0, 2 & 2 & P \\ \hline 
67 & -2 \cdot 13 \cdot 61 \cdot 67 \cdot 73& 4 & [20, 2, 2, 2] & 3 & 0, 2 & 0, 2 & 4 & P \\ \hline
68 & -2^3 \cdot 17 \cdot 41 \cdot 1433 & 1 & [2]& 0 & 1 & 0 & 1 & P \\ \hline
71 &  -2\cdot 5\cdot 71 \cdot 12269 & 2 & [2, 2] & 1 & 1& 1& 2 & P \\ \hline

73 & -2 \cdot 73 \cdot 63073 & 3 & [2, 2] & 1 & 0 & 1 & 1, 3 & \\ \hline
76 & -2^3 \cdot 5 \cdot 19 \cdot 23 \cdot 571 & 3 & [2] & 0 & 0 & 1 & 1 & P \\ \hline
83 &  -2 \cdot 83 \cdot 71713 & 2 & [2, 2, 2] & 2 & 0 & 2 & 2, 4 & \\ \hline
89 & -2 \cdot 89 \cdot 131 \cdot 587  & 1 & [2, 2] & 1 & 1 & 0 & 1, 3 & \\ \hline
92 & -2^3 \cdot 23 \cdot 29 \cdot 2741&4 & [2, 2] & 1 & 1 & 1& 2 & P \\ \hline

95 & -2 \cdot 5\cdot 19 \cdot 79 \cdot 1039 & 3 & [42, 2] & 1 & 0 & 1 & 1, 3 & \\ \hline
97 & -2 \cdot 11 \cdot 19 \cdot 97 \cdot 401 & 5  & [2] & 0 & 0 & 1 & 1 & P \\ \hline
101  & -2 \cdot 5 \cdot 31 \cdot 101 \cdot 563 & 2 & [2, 2, 2] & 2 & 0, 2 & 2 & 2, 4 & \\ \hline
103  & -2 \cdot 103 \cdot 88993 & 2 & [4, 2, 2] & 2 & 0, 2 & 2 & 2, 4 & \\ \hline
109 & -2 \cdot 41 \cdot 109 \cdot 2297 & 2 & [2, 2] & 1 & 1 & 1 & 2 & P \\ \hline

113 & -2 \cdot 89 \cdot 113 \cdot 1097 & 2 & [2, 2] & 1 & 1 & 1 & 2 & P\\ \hline
115 & -2 \cdot 5 \cdot 23 \cdot 67 \cdot 1483 & 3 & [2, 2] & 1 & 0 & 1 & 1, 3 & \\ \hline
124 & -2^3 \cdot 31 \cdot 107137 & 2 & [6] & 0 & 0 & 0 & 0, 2 &  \\ \hline
125 & -2 \cdot 5^3 \cdot 17 \cdot 6353 & 2 & [2, 2] & 1 & 1 & 1 & 2 & P \\ \hline
127 & -2 \cdot 127 \cdot 197 \cdot 557 & 3 & [10] & 0 & 1 & 0 & 1 & P \\ \hline

131 & -2\cdot 5 \cdot 131 \cdot 22637 & 1 & [2] & 0 & 1 & 0 & 1  & P \\ \hline
137 & -2 \cdot 137 \cdot 118369 & 3 & [18]  & 0 & 0 & 1 & 1 & P \\ \hline
139 & -2 \cdot 139 \cdot 120097 & 3 & [2, 2] & 1 & 0 & 1 & 1, 3 & \\ \hline
143 & -2\cdot 11 \cdot 13 \cdot 123553 & 4 & [2, 2] & 1 & 1 & 1 & 2 & P \\ \hline
145 & -2 \cdot 5 \cdot 13 \cdot 23 \cdot 29 \cdot 419 & 4 & [10] & 0 & 1 & 1 & 2 & R \\ \hline

148 & -2^3 \cdot 37 \cdot 127873 & 1 & [4, 2] & 1 & 2 & 1 & 3 & R \\ \hline
\end{array}
\end{equation*}
\normalsize

By SAGE \cite{Sa20} we can find all $b$ satisfying the conditions above in the range $1\leq b \leq 150$, which are exactly those in the first column of the table above. The number of elements of type $P$ is $22$, the number of elements of type $R$ is $4$, and the number of elements where our method cannot determine the $2$-Selmer rank is $15$.

\begin{remark}
When $b=67$, we deduce that $r_1=r_2=2$. On the other hand, when $b=61$ we cannot determine the exact value of the rank of $E(\Q)$. 
\end{remark}


\subsection*{Acknowledgments}
We thank Chao Li and Ariel Pacetti for answering questions on their papers \cite{Li19} and \cite{BPT}, respectively, and Denis Simon for helpful correspondences for his code. We also thank the referee for helpful comments and suggestions.

H.Y. was supported by Research Resettlement Fund for the new faculty of Seoul National University. He was also supported by the National Research Foundation of Korea (NRF) grant funded by the Korea government (MSIT) (No. 2019R1C1C1007169 and No. 2020R1A5A1016126). 
M.Y. was supported by a KIAS Individual Grant (SP075201) via the Center for Mathematical Challenges at Korea Institute for Advanced Study. He was also supported by the National Research Foundation of Korea (NRF) grant funded by the Korea government (MSIT) (No. 2020R1C1C1A01007604).


\begin{thebibliography}{99}
\bibitem[BPT]{BPT} Daniel Barrera Salazar, Ariel Pacetti, and Gonzalo Tornar\'ia, \emph{On 2-Selmer groups and quadratic twists of elliptic curves}, preprint, arXiv: 2001.02263v3.

\bibitem[BK77]{BK77} Armand Brumer and Kenneth Kramer, \emph{The rank of elliptic curves}, Duke Math. J. \textbf{44}, no. 4. (1977), 715--743. 


\bibitem[BV15]{BV15} Manjul Bhargava and Ila Varma, \emph{On the mean number of 2-torsion elements in the class groups, narrow class groups, and ideal groups of cubic orders and fields}, Duke Math. J. \textbf{164}, no 10. (2015), 1911--1933.


\bibitem[DD11]{DD11} Tim and Vladimir Dokchitser, \emph{Root numbers and parity of ranks of elliptic curves}, J. Reine Angew. Math. \textbf{658} (2011), 39--64.

\bibitem[DV18]{DV18} David S. Dummit and John Voight, \emph{The 2-Selmer group of a number field and heuristics for narrow class groups and signature ranks of units}, Proc. Lond. Math. Soc. \textbf{117} (2018), 682--726.


\bibitem[Kr81]{Kr81} Kenneth Kramer, \emph{Arithmetic of elliptic curves upon quadratic extension}, Trans. Amer. Math. Soc. \textbf{264} (1981), 121--135.

\bibitem[KL19]{KL19} Daniel Kriz and Chao Li, \emph{Goldfeld's conjecture and congruences between Heegner points}, Forum Math. Sigma (2019), Vol. \textbf{7}, e15, 80 pages.

\bibitem[Li19]{Li19} Chao Li, \emph{2-Selmer groups, 2-class groups and rational points on elliptic curves}, Trans. Amer. Math. Soc. \textbf{371} (2019), 4631--4653.

\bibitem[Mi13]{Mi13} James S. Milne, \emph{Class field theory (version 4.02)}, Lecture note available at \url{https://www.jmilne.org/math/CourseNotes/CFT.pdf}.



\bibitem[Sa20]{Sa20} The Sage Developers, \emph{SageMath (version 9.0)} (2020), available from \url{https://www.sagemath.org}.

\bibitem[Sc94]{Sc94} Edward F. Schaefer, \emph{Class groups and Selmer groups}, J. Number Theory \textbf{56} (1996), 79--114.

\bibitem[Si94]{Si94} Joseph H. Silverman, \emph{Advanced topics in the arithmetic of elliptic curves}, Graduate Texts in Mathematics, \textbf{151}, Springer-Verlag (1994).

\bibitem[Si09]{Si09} Joseph H. Silverman, \emph{The arithmetic of elliptic curves (2nd ed)}, Graduate Texts in Mathematics, \textbf{106}, Springer (2009).

\bibitem[St17]{St17} Michael Stoll, \emph{Selmer groups and descent}, lecture note available at \url{https://people.maths.bris.ac.uk/~matyd/Trieste2017/Stoll.pdf}.

\end{thebibliography}
\end{document}